\newtheorem{theorem}{theorem}
\newtheorem{corollary}{corollary}
\newtheorem{lemma}{lemma}
\newtheorem{proposition}{proposition}
\newtheorem{remark}{remark}
\newenvironment{proof}[1][Proof]{\noindent\textbf{#1.} }{\ \rule{0.5em}{0.5em}}
\begin{document}

\title{On second order $q$-difference equations satisfied by
Al-Salam-Carlitz I-Sobolev type polynomials of higher order}
\author{Carlos Hermoso$^{1}$, Edmundo J. Huertas$^{1}$, Alberto Lastra$^{1}$%
, Anier Soria-Lorente$^{2,\dagger}$, \\
$^{1}$Departamento de Física y Matemáticas, Universidad de Alcalá,\\
Ctra. Madrid-Barcelona, Km. 33,600.\\
Facultad de Ciencias, 28805 - Alcalá de Henares, Madrid, Spain.\\
carlos.hermoso@uah.es \& edmundo.huertas@uah.es \& alberto.lastra@uah.es\\
$^{2}$ Facultad de Ciencias Técnicas, Universidad de Granma,\\
Km. 17,5 de la carretera de Bayamo-Manzanillo, 85100 Bayamo, Cuba.\\
asorial@udg.co.cu, asorial1983@gmail.com\\
$^{\dagger }$Corresponding author. }
\date{\emph{(\today)}}
\maketitle

\begin{abstract}
This contribution deals with the sequence $\{\mathbb{U}_{n}^{(a)}(x;q,j)%
\}_{n\geq 0}$ of monic polynomials in $x$, orthogonal with respect to a
Sobolev-type inner product related to the Al-Salam--Carlitz I orthogonal
polynomials, and involving an arbitrary number $j$ of $q$-derivatives on the
two boundaries of the corresponding orthogonality interval, for some fixed
real number $q\in(0,1)$. We provide several versions of the corresponding
connection formulas, ladder operators, and several versions of the second
order $q$-difference equations satisfied by polynomials in this sequence. As
a novel contribution to the literature, we provide certain three term
recurrence formula with rational coefficients satisfied by $\mathbb{U}%
_{n}^{(a)}(x;q,j)$, which paves the way to establish an appealing
generalization of the so-called $J$-fractions to the framework of
Sobolev-type orthogonality.

\medskip

\textbf{AMS Subject Classification:} 33D45; 05A30; 39A13

\medskip

\textbf{Key Words and Phrases:} Al-Salam--Carlitz I polynomials;
Al-Salam--Carlitz I-Sobolev type polynomials; second order linear $q$%
-difference equations; structure relations; recurrence relations; basic
hypergeometric series.

\medskip
\end{abstract}


\section{Introduction}

\label{S1-Intro}



The Al-Salam--Carlitz I and II orthogonal polynomials of degree $n$, usually
denoted in the literature as $U_{n}^{(a)}(x;q)$ and $V_{n}^{(a)}(x;q)$
respectively, are two systems of one parameter $q$-hypergeometric
polynomials introduced in 1965 by W. A. Al-Salam and L. Carlitz, in their
seminal work \cite{AsC-MN65}. Here, $q\in\mathbb{R}$ stands for a fixed
parameter, being the polynomials written in the variable $x$. There is a
straightforward relationship between $U_{n}^{(a)}(x;q) $ and $%
V_{n}^{(a)}(x;q)$ (see \cite[Ch. VI, §10, pp. 195--198]{Chi-78}) 
\begin{equation*}
U_{n}^{(a)}(x;q^{-1})=V_{n}^{(a)}(x;q),
\end{equation*}%
and they are known to be positive definite orthogonal polynomial sequences
for $a<0$, and $a>0$ respectively. Here, and throughout the paper, we assume
the parameter $q$ is such that $0<q<1$, which implies that these two
families belong to the class of orthogonal polynomial solutions of certain
second order $q$-difference equations, known in the literature as the 
\textit{Hahn class} (see \cite{H-MN49}, \cite{koek2010}). In fact, as we
show later on, they can be explicitly given in terms of basic hypergeometric
series. Given the close relation between these two families, and for the
sake of clarity, in this paper we will focus on the Al-Salam--Carlitz I
orthogonal polynomials $\{U_{n}^{(a)}(x;q)\}_{n\geq 0}$. The results
obtained can be also stated for the Al-Salam--Carlitz II orthogonal
polynomials by replacing the parameter $q$ by $q^{-1}$, so we omit explicit
details concerning this second family of orthogonal polynomials.

\smallskip

The Al-Salam--Carlitz I polynomials are orthogonal on the interval $[a,1]$,
with a quite simple $q$-lattice, which makes them suitable for the study to
be carried out hereafter, and also they are of interest in their own right.
For example, they are known to be proportional to the eigenfunctions of
certain quantum mechanical $q$-harmonic oscillators. In \cite{AS-LMP93}, it
is clearly shown that many properties of this $q$-oscillators can be
obtained from the properties of the Al-Salam--Carlitz I orthogonal
polynomials. They are also known to be birth and death process polynomials (%
\cite[Sec. 18.2]{Ismail-05}), with birth rate $aq^{n}$ and death rate $%
1-q^{n}$, and for $a=0$ they become the well known Rogers-Szeg\H{o}
polynomials, of deep implications in the study of the celebrated
Askey-Wilson integral (see, for example \cite{AI-SPM83}, \cite{IS-CJM88}).

\smallskip

On the other hand, in the last decades, the so called \textit{Sobolev
orthogonal polynomials} have attracted the attention of many researchers.
Firstly, this name was given to those families of polynomials orthogonal
with respect to inner products involving positive Borel measures supported
on infinite subsets of the real line, and also involving regular
derivatives. When these derivatives appear only on function evaluations on a
finite discrete set, the corresponding families are called \textit{%
Sobolev-type} or \textit{discrete Sobolev} orthogonal polynomial sequences.
For a recent and comprehensive survey on the subject, see \cite{MX-EM15} and
the references therein. In the last decade of the past century, H. Bavinck
introduced the study of inner products involving differences (instead of
regular derivatives) in uniform lattices on the real line (see \cite%
{B-JCAM95}, \cite{B-AA95}, \cite{B-IM96}, and also \cite{HS-NA19}\ for
recent results on this topic). By analogy with the continuous case, these
are also called Sobolev-type or discrete Sobolev inner products. In
contrast, they are defined on uniform lattices. As a generalization of this
last matter, here we focus on a particular Sobolev-type inner product
defined on a $q$-lattice, instead of on a uniform lattice. This has already
been considered in other works (see, for example in \cite{CS-JDE18} for only
one $q$-derivative). In the present study, we consider an arbitrary number $%
j\in\mathbb{N}$, $j\geq 1$ of $q$-derivatives in the discrete part of the
inner product. For an interesting related work to this paper, see for
example the preprint \cite{FMM-arXiv20}, which appeared just a few days ago
while we were giving the finishing touches to the present manuscript. There,
the authors generalize the action of an arbitrary number of $q$-derivatives
for general orthogonality measures, using the same techniques as for example
in \cite{GGH-M20}, and also in the present paper. It is also worth
mentioning the nice variation considering special non-uniform lattices
(snul), instead of uniform or $q$-lattices, studied in the recent work \cite%
{R-AA20}.

\smallskip

Having said all that, and to the best of our knowledge, an arbitrary number
of $q$-derivatives acting at the same time on the two boundaries of a
bounded orthogonality interval, has never been previously considered in the
literature, and the present work is intended to be a first step in this
direction. This reveals some small differences of the corresponding
polynomial sequences, for example related with the parity of the
polynomials, with respect to what happens considering only one mass point
(as in \cite{FMM-arXiv20}), and that we have right now under study. To be
more precise, this paper deals with the sequence of monic $q$-polynomials $\{%
\mathbb{U}_{n}^{(a)}(x;q,j)\}_{n\geq 0}$, orthogonal with respect to the
Sobolev-type inner product%
\begin{eqnarray}
\left\langle f,g\right\rangle _{\lambda ,\mu }
&=&\int_{a}^{1}f(x;q)g(x;q)(qx,a^{-1}qx;q)_{\infty }d_{q}x
\label{q-SobtypInnPr} \\
&&\quad +\lambda ({\mathscr D}_{q}^{j}f)\left( a;q\right) ({\mathscr D}%
_{q}^{j}g)\left( a;q\right) +\mu ({\mathscr D}_{q}^{j}f)\left( 1;q\right) ({%
\mathscr D}_{q}^{j}g)\left( 1;q\right) ,  \notag
\end{eqnarray}%
where $(qx,a^{-1}qx;q)_{\infty }d_{q}x$ is the orthogonality measure
associated to the Al-Salam--Carlitz I orthogonal polynomials, $a<0$, $%
\lambda ,\mu \in \mathbb{R}_{+}$ and $({\mathscr D}_{q}f)$ denotes the $q$%
-derivative operator, as defined below in (\ref{q-derivative}). It is worth
noting that the above inner product involves an arbitrary number of $q$%
-derivatives on function evaluations on the discrete points $x=a$ and $x=1$,
exclusively. We observe such points conform the boundary of the
orthogonality interval of the Al-Salam--Carlitz I orthogonal polynomials.
Thus, as an extension of the language used in literature, throughout this
manuscript we will refer to $\mathbb{U}_{n}^{(a)}(x;q,j)$ as \textit{%
Al-Salam--Carlitz I-Sobolev type orthogonal polynomials of higher order},
and for the sake of brevity, in what follows we just write $\mathbb{U}%
_{n}^{(a)}(x;q,j)=\mathbb{U}_{n}^{(a)}(x;q)$. We provide here two explicit
representations for $\mathbb{U}_{n}^{(a)}(x;q)$, one as a linear combination
of two consecutive Al-Salam--Carlitz I orthogonal polynomials $%
U_{n}^{(a)}(x;q)$ and $U_{n-1}^{(a)}(x;q)$, and the other one as a $%
_{3}\phi_{2}$ $q$-hypergeometric series, which was unknown so far. This
basic hypergeometric character is always $_{3}\phi _{2}$, with independence
of the number $j$ of $q$-derivatives considered in (\ref{q-SobtypInnPr}).
Next, we obtain two different versions of the structure relation satisfied
by the Sobolev-type $q$-orthogonal polynomials in $\mathbb{U}_{n}^{(a)}(x;q)$%
, and next we use them to obtain closed expressions for the corresponding 
\textit{ladder (creation and annihilation) }$q$-\textit{difference operators}%
. As an application of these ladder $q$-difference operators, we obtain a
three-term recurrence formula with rational coefficients, which allows us to
find every polynomial $\mathbb{U}_{n+1}^{(a)}(x;q)$ of precise degree $n+1$,
in terms of the previous two consecutive polynomials of the same sequence $%
\mathbb{U}_{n}^{(a)}(x;q)$ and $\mathbb{U}_{n-1}^{(a)}(x;q)$, and up to four
different versions of the linear second order $q$-difference equation
satisfied by $\mathbb{U}_{n}^{(a)}(x;q)$.

\smallskip

In the work, we provide four different versions of the second order $q-$%
difference equations satisfied by the family of orthogonal polynomials under
consideration. Also, two different representations of such polynomials are
determined: one as a linear combination of standard Al-Salam--Carlitz I
orthogonal polynomials, and a second one as $_{3}\phi _{2}$ series. Two
versions of structure relations are obtained, in contrast to standard
results, giving rise to four second order $q-$difference equations satisfied
by the elements of this family. The previous results, which clarify the
enriching structure of such polynomials, are followed by novel results using
a non-standard technique to achieve a three-term recurrence formula, and
leading to the appearance of the polynomials under consideration as the
numerators and denominators of the convergents of certain J-fractions.

\smallskip

The manuscript is organized as follows. In Section \ref{S2-DefNot}, we
recall some basic definitions and notations of the $q$-calculus theory, as
well as the basic properties of the Al-Salam--Carlitz I polynomials. In
Section \ref{S3-ConnForm}, we obtain some connection formulas and the basic
hypergeometric representation of the Al-Salam--Carlitz I-Sobolev type
orthogonal polynomials of higher order. Section \ref{S4-LOp} is focused on
two structure relations for the sequence $\{\mathbb{U}_{n}^{(a)}(x;q)\}_{n%
\geq 0}$ , as well as the two different versions of the aforementioned three
term recurrence formula with rational coefficients that $\mathbb{U}%
_{n}^{(a)}(x;q)$ satisfies. In Section \ref{S5-LdE}, combining the
connection formula for $\mathbb{U}_{n}^{(a)}(x;q)$, and the structure
relations obtained in the preceeding Sections, we provide the $q$-difference
ladder operators and four versions of the second linear $q$-difference
equation that the Al-Salam--Carlitz I-Sobolev type polynomials of higher
order satisfy. The work ends with two brief sections on further results. The
first one describes results relating Al-Salam--Carlitz I-Sobolev type
polynomials with Jacobi fractions, and the second illustrates the form of
such polynomials together with some important remarks. A final section on
conclusions and future research problems is also included.




\section{Definitions and notations}

\label{S2-DefNot}



This first part of the section is twofold. A first subsection provides the
main tools used in the framework of $q$-calculus, in order to make our
exposition be self-contained. Afterwards, we describe known facts on
Al-Salam--Carlitz I polynomials.

\subsection{$q$-calculus review}



For every $q\neq 0$ and $q\neq 1$, the $q$-number, $q$-bracket, or simply
the \textit{basic number} $[n]_{q}$, is defined by \cite{koek2010}, \cite%
{NikUvSu-91}%
\begin{equation*}
[0]_{q}=0,\quad [n]_{q}=\frac{1-q^{n}}{1-q}=\sum_{k=0}^{n-1}q^{k},\quad
n=1,2,3,\ldots ,
\end{equation*}%
which comes from the equality%
\begin{equation*}
\lim_{q\rightarrow 1}\frac{1-q^{n}}{1-q}=n.
\end{equation*}%
In this framework, a $q$-analogue of the factorial of $n$ is given by%
\begin{equation*}
[0]_{q}!=1,\quad [n]_{q}!=[n]_{q}[n-1]_{q}\cdots [2]_{q}[1]_{q},\quad
n=1,2,3,\ldots ,
\end{equation*}%
and we can also give a $q$-analogue of the well known Pochhammer symbol, or
shifted factorial (see \cite{koek2010}). For $n=1,2,3,\ldots $, we have%
\begin{equation*}
\left( a;q\right) _{0}=1,\quad \left( a;q\right) _{n}=(1-a)(1-aq)\cdots
(1-aq^{n-1})=\prod\limits_{i=1}^{n}(1-aq^{i-1}).
\end{equation*}%
Moreover, we use the following notation%
\begin{equation*}
(a_{1},\ldots ,a_{r};q)_{n}=\prod_{k=1}^{r}(a_{k};q)_{n}.
\end{equation*}

The following definitions, also in the framework of the $q$-calculus, can be
found in \cite{koek2010}. The basic hypergeometric, or $q$-hypergeometric
series $_{r}\phi _{s}$, is defined as follows. Let $\left\{ a_{i}\right\}
_{i=1}^{r}$ and $\left\{ b_{i}\right\} _{i=1}^{s}$ be complex numbers such
that $b_{i}\neq q^{-n}$ for $n\in \mathbb{N}$. We write 
\begin{equation*}
_{r}\phi _{s}\left( 
\begin{array}{c}
a_{1},a_{2},\ldots ,a_{r} \\ 
b_{1},b_{2},\ldots ,b_{s}%
\end{array}%
;q,z\right) =\sum_{k=0}^{\infty }\frac{\left( a_{1},\ldots ,a_{r};q\right)
_{k}}{\left( b_{1},\ldots ,b_{s};q\right) _{k}}\left( (-1)^{k}q^{\binom{k}{2}%
}\right) ^{1+s-r}\frac{z^{k}}{\left( q;q\right) _{k}}.
\end{equation*}

The $q$-binomial coefficient is given by%
\begin{equation*}
\begin{bmatrix}
k \\ 
n%
\end{bmatrix}%
_{q}=\frac{(q;q)_{n}}{(q;q)_{k}(q;q)_{n-k}}=\frac{[n]_{q}!}{%
[k]_{q}![n-k]_{q}!}=%
\begin{bmatrix}
k \\ 
n-k%
\end{bmatrix}%
_{q},\quad k=0,1,\ldots ,n,
\end{equation*}%
where $n$ denotes a nonnegative integer.

Concerning the $q$-analog of the derivative operator, we have the $q$%
-derivative, or the Euler--Jackson $q$-difference operator%
\begin{equation}
({\mathscr D}_{q}f)(z)=%
\begin{cases}
\displaystyle\frac{f(qz)-f(z)}{(q-1)z}, & \text{if}\ z\neq 0,\ q\neq 1, \\ 
&  \\ 
f^{\prime }(z), & \text{otherwise},%
\end{cases}
\label{q-derivative}
\end{equation}%
where ${\mathscr D}_{q}^{0}f=f$, ${\mathscr D}_{q}^{n}f={\mathscr D}_{q}({%
\mathscr D}_{q}^{n-1}f)$, for $n\geq 1$, and 
\begin{equation*}
\lim\limits_{q\rightarrow 1}{\mathscr D}_{q}f(z)=f^{\prime }(z).
\end{equation*}%
Moreover, one has the following properties%
\begin{equation}
{\mathscr D}_{q}[f(\gamma z)]=\gamma ({\mathscr D}_{q}f)(\gamma z),\quad
\forall \,\,\gamma \in \mathbb{C},  \label{cadrule}
\end{equation}%
\begin{equation}
{\mathscr D}_{q}f(z)={\mathscr D}_{q^{-1}}f(qz)\Leftrightarrow {\mathscr D}%
_{q^{-1}}f(z)={\mathscr D}_{q}f(q^{-1}z),  \label{DqProp}
\end{equation}%
\begin{eqnarray}
{\mathscr D}_{q}[f(z)g(z)] &=&f(qz){\mathscr D}_{q}g(z)+g(z){\mathscr D}%
_{q}f(z)  \label{prodqD} \\
&=&f(z){\mathscr D}_{q}g(z)+g(qz){\mathscr D}_{q}f(z),  \notag
\end{eqnarray}%
and the following interesting property which can be found in \cite[p. 104]%
{KS-JCAM01}%
\begin{equation}
{\mathscr D}_{q^{-1}}({\mathscr D}_{q}f)(z)=q{\mathscr D}_{q}({\mathscr D}%
_{q^{-1}}f)(z),  \label{DqProOK}
\end{equation}%
This $q$-derivative operator leads to define a $q$-analogue of Leibniz' rule%
\begin{equation}
{\mathscr D}_{q}^{n}[f(z)g(z)]=\sum_{k=0}^{n}%
\begin{bmatrix}
k \\ 
n%
\end{bmatrix}%
_{q}({\mathscr D}_{q}^{k}f)(z)\cdot({\mathscr D}_{q}^{n-k}g)(q^{k}z),\quad
n=0,1,2,\ldots .  \label{LRule}
\end{equation}

Of special interest is the way in which the integral form of the inner
product (\ref{q-SobtypInnPr}) is defined, corresponding to the so called
Jackson $q$-integral, given by%
\begin{equation*}
\int_{0}^{z}f(x)d_{q}x=(1-q)z\sum_{k=0}^{\infty }q^{k}f(q^{k}z),
\end{equation*}%
which in a generic interval $[a,b]$ is given by%
\begin{equation*}
\int_{a}^{b}f(x)d_{q}x=\int_{0}^{b}f(x)d_{q}x-\int_{0}^{a}f(x)d_{q}x.
\end{equation*}

The following definition will also be needed throughout the paper. The
Jackson--Hahn--Cigler $q$-subtraction is given by (see, for example \cite[%
Def. 6]{E-PEAS09}, and the references given there)%
\begin{equation*}
\left( x\boxminus _{q}y\right) ^{n}=\prod_{j=0}^{n-1}\left( x-yq^{j}\right)
=x^{n}(y/x;q)_{n}=\sum_{k=0}^{n}%
\begin{bmatrix}
k \\ 
n%
\end{bmatrix}%
_{q}q^{\binom{k}{2}}(-y)^{k}x^{n-k}.
\end{equation*}

Finally, we recall here the $q$-Taylor formula (see \cite[Th. 6.3]%
{srivastava2011zeta}), with the Cauchy remainder term, which is defined by%
\begin{equation*}
f(x)=\sum_{k=0}^{n}\frac{({\mathscr D}_{q}^{k}f)(a)}{[k]_{q}!}(x\boxminus
_{q}a)^{k}+\frac{1}{[n]_{q}!}\int_{a}^{x}({\mathscr D}_{q}^{n+1}f)(t)\cdot(x%
\boxminus _{q}qt)^{n}d_{q}t.
\end{equation*}

The promising idea of $q-$derivatives is also potentially applicable in
rings, see~\cite{shang1,shang2}.



\subsection{Al-Salam--Carlitz I orthogonal polynomials}



After the above $q$-calculus introduction, we continue by giving several
aspects and properties of the Al-Salam--Carlitz I polynomials $%
\{U_{n}^{(a)}(x;q)\}_{n\geq 0}$ . All the elements presented in the
remaining of this section can be found in \cite[Ch. VI, §10]{Chi-78}, \cite[%
Sec. 18.2]{Ismail-05}, \cite[Sec. 14.24]{koek2010}, and \cite{GR-Basic04},
among other references. Such polynomials are orthogonal with respect to the
inner product on $\mathbb{P}$, the linear space of polynomials with real
coefficients%
\begin{equation*}
\left\langle f,g\right\rangle =\int_{a}^{1}f(x;q)g(x;q)d\alpha ^{(a)},\quad
a<0,
\end{equation*}%
where $d\alpha ^{(a)}=(qx,a^{-1}qx;q)_{\infty }\,d_{q}x$, which implies that 
$\alpha ^{(a)}$ is a step function on $[a,1]$ with jumps%
\begin{equation*}
\frac{q^{k}}{(aq;q)_{\infty }(q,q/a;q)_{k}}\quad \text{at the points }x=q^{k}%
\text{, }k=0,1,2,\ldots ,
\end{equation*}%
and jumps%
\begin{equation*}
\frac{-aq^{k}}{(q/a;q)_{\infty }(q,aq;q)_{k}}\quad \text{at the points }%
x=aq^{k}\text{, }k=0,1,2,\ldots .
\end{equation*}%
It can be easily checked that, when $a=-1$ the above inner product becomes
the inner product associated to the discrete $q$-Hermite orthogonal
polynomials.

The Al-Salam--Carlitz I polynomials can be explicitly given by%
\begin{equation}
U_{n}^{(a)}(x;q)=(-a)^{n}q^{\binom{n}{2}}\,_{2}\phi _{1}\left( 
\begin{array}{c}
q^{-n},x^{-1} \\ 
0%
\end{array}%
;q,a^{-1}qx\right) ,\quad a<0,  \label{ASP}
\end{equation}%
satisfying the orthogonality relation%
\begin{equation*}
\int_{a}^{1}U_{m}^{(a)}(x;q)U_{n}^{(a)}(x;q)d\alpha
^{(a)}=(1-a)(-a)^{n}(q;q)_{n}q^{\binom{n}{2}}\delta _{m,n}
\end{equation*}



\begin{proposition}
\label{S1-Proposition11} Let $\{U_{n}^{(a)}(x;q)\}_{n\geq 0}$ be the
sequence of Al-Salam--Carlitz I polynomials of degree $n$. The following
statements hold:

\begin{enumerate}
\item The recurrence relation \cite[Section 14.24]{koek2010} 
\begin{equation}
xU_{n}^{(a) }(x;q) =U_{n+1}^{\left( a\right) }\left( x;q\right)
+\beta_nU_{n}^{\left( a\right) }\left( x;q\right) +\gamma_n U_{n-1}^{\left(
a\right) }\left( x;q\right),  \label{ReR}
\end{equation}
with initial conditions $U_{-1}^{(a)}(x;q)=0$ and $U_{0}^{(a)}(x;q)=1$.
Here, $\beta_n=\left( a+1\right) q^{n}$ and $\gamma_n=-aq^{n-1}\left(
1-q^{n}\right)$.

\item Structure relation \cite{KS-JCAM01}. For every $n\in \mathbb{N}$,%
\begin{equation}
\sigma (x){\mathscr D}_{q^{-1}}U_{n}^{(a)}\left( x;q\right) =\overline{%
\alpha }_{n}U_{n+1}^{\left( a\right) }\left( x;q\right) +\overline{\beta }%
_{n}U_{n}^{\left( a\right) }\left( x;q\right) +\overline{\gamma }%
_{n}U_{n-1}^{\left( a\right) }\left( x;q\right) ,  \label{StruR}
\end{equation}%
where $\sigma (x)=(x-1)(x-a)$, $\overline{\alpha }_{n}=q^{1-n}\left[ n\right]
_{q}$, $\overline{\beta }_{n}=\left( a+1\right) q\left[ n\right] _{q}$ and $%
\overline{\gamma }_{n}=aq^{n}\left[ n\right] _{q}$.

\item Squared norm \cite[Section 14.24]{koek2010}. For every $n\in \mathbb{N}
$,%
\begin{equation*}
||U_{n}^{\left( a\right) }||^{2}=\left( -a\right) ^{n}\left( 1-q\right)
\left( q;q\right) _{n}\left( q,a,a^{-1}q;q\right) _{\infty }q^{\binom{n}{2}}.
\end{equation*}

\item Forward shift operator \cite[Section 14.24]{koek2010}%
\begin{equation}
{\mathscr D}_{q}^{k}U_{n}^{\left( a\right) }\left( x;q\right) =\left[ n%
\right] _{q}^{\left( k\right) }U_{n-k}^{\left( a\right) }\left( x;q\right) ,
\label{FSop}
\end{equation}%
where 
\begin{equation*}
\left[ n\right] _{q}^{(k)}=\frac{(q^{-n};q)_{k}}{(q-1)^{k}}q^{kn-\binom{k}{2}%
},
\end{equation*}%
denote the $q$-falling factorial \cite{AS-JEDA13}.

\item Second-order $q$-difference equation \cite{DA-JPA05} 
\begin{equation*}
\sigma(x){\mathscr D}_q{\mathscr D}_{q^{-1}}U_{n}^{(a) }(x;q)+\tau(x){%
\mathscr D}_{q}U_{n}^{(a) }(x;q)+\lambda_{n,q}U_{n}^{(a) }(x;q)=0,
\end{equation*}
where $\tau(x)=(x-a-1)/(1-q)$ and $\lambda_{n,q}=[n]_q([1-n]_q\sigma^{\prime
\prime }/2-\tau^{\prime })$.
\end{enumerate}
\end{proposition}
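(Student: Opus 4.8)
Every item is a classical structural property of a Hahn--class family, and each is stated with a reference; so the goal is only to record a single coherent route. The plan is to derive all five statements from the explicit representation~(\ref{ASP}), equivalently from the generating function
\begin{equation*}
G(x,t):=\sum_{n\ge 0}\frac{U_{n}^{(a)}(x;q)}{(q;q)_{n}}\,t^{n}=\frac{(t;q)_{\infty}\,(at;q)_{\infty}}{(xt;q)_{\infty}}.
\end{equation*}
I would justify this identity by checking $G(x,0)=1$ and verifying that the right-hand side satisfies the functional equation $(1-xt)\,G(x,t)=(1-t)(1-at)\,G(x,qt)$ --- an immediate consequence of the factorizations $(t;q)_{\infty}=(1-t)(qt;q)_{\infty}$, $(at;q)_{\infty}=(1-at)(aqt;q)_{\infty}$ and $(xt;q)_{\infty}=(1-xt)(xqt;q)_{\infty}$ --- and then observing that, upon writing $G(x,t)=\sum_{n\ge 0}V_{n}(x)\,t^{n}/(q;q)_{n}$, this functional equation forces the $V_{n}$ to be monic polynomials of degree $n$ obeying precisely the recurrence~(\ref{ReR}), with $\beta_{n}=(a+1)q^{n}$ and $\gamma_{n}=-aq^{n-1}(1-q^{n})$; hence $V_{n}=U_{n}^{(a)}$, and this step simultaneously proves item~1.

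For the forward shift~(\ref{FSop}) I would use $1/(xt;q)_{\infty}=\sum_{m\ge 0}(xt)^{m}/(q;q)_{m}$ and ${\mathscr D}_{q}[x^{m}]=[m]_{q}x^{m-1}$ to see that ${\mathscr D}_{q}$ in the variable $x$ acts on $G$ as multiplication by $t/(1-q)$; comparing the coefficients of $t^{n}$ then gives ${\mathscr D}_{q}U_{n}^{(a)}(x;q)=[n]_{q}U_{n-1}^{(a)}(x;q)$, and iterating this $k$ times yields~(\ref{FSop}) once the product $\prod_{i=0}^{k-1}[n-i]_{q}$ is rewritten as $(q^{-n};q)_{k}(q-1)^{-k}q^{kn-\binom{k}{2}}$, a one-line $q$-Pochhammer manipulation. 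The squared norm (item~3) I would read off from $\|U_{n}^{(a)}\|^{2}=\langle U_{n}^{(a)},x^{n}\rangle$ (monicity plus orthogonality to $\mathbb{P}_{n-1}$), either by evaluating the bilinear form $\int_{a}^{1}G(x,t)G(x,s)\,(qx,a^{-1}qx;q)_{\infty}\,d_{q}x$ and isolating its diagonal, or by performing $n$ successive Jackson integrations by parts with the $q$-Pearson equation of the next paragraph, which reduces everything to the total mass $\int_{a}^{1}(qx,a^{-1}qx;q)_{\infty}\,d_{q}x$ (computable by summing the jumps of $\alpha^{(a)}$ listed above) times the factor $(-a)^{n}(q;q)_{n}q^{\binom{n}{2}}$; as a check, $\gamma_{n}=\|U_{n}^{(a)}\|^{2}/\|U_{n-1}^{(a)}\|^{2}$ must reproduce the coefficient in~(\ref{ReR}).

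The remaining two statements rest on the first-order ($q$-Pearson) equation for the weight $w(x)=(qx,a^{-1}qx;q)_{\infty}$: from $w(qx)/w(x)=[(1-qx)(1-a^{-1}qx)]^{-1}$ and the identity $(1-qx)(1-a^{-1}qx)=a^{-1}\sigma(qx)$ one obtains $\sigma(qx)\,w(qx)=a\,w(x)$, equivalently ${\mathscr D}_{q}[\sigma(x)w(x)]=\tau(x)w(x)$ with $\sigma(x)=(x-1)(x-a)$ and $\tau(x)=(x-a-1)/(1-q)$. For the structure relation~(\ref{StruR}): since $\sigma(x){\mathscr D}_{q^{-1}}U_{n}^{(a)}$ has degree $n+1$, it equals $\sum_{k=0}^{n+1}c_{k,n}U_{k}^{(a)}$, and the standard moment argument --- writing $c_{k,n}$ as a weighted inner product, transferring $\sigma{\mathscr D}_{q^{-1}}$ off $U_{n}^{(a)}$ by a Jackson summation by parts and using the Pearson equation so that it lands on a polynomial of degree $\le k+1$ --- forces $c_{k,n}=0$ for $k\le n-2$; the leading coefficient $[n]_{q^{-1}}=q^{1-n}[n]_{q}$ then fixes $\overline{\alpha}_{n}$, and matching one further coefficient together with items~1 and~3 gives $\overline{\beta}_{n}=(a+1)q[n]_{q}$ and $\overline{\gamma}_{n}=aq^{n}[n]_{q}$. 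For the $q$-difference equation (item~5), the same Pearson equation makes the $q$-Hahn operator $\mathcal{L}[y]=\sigma(x){\mathscr D}_{q}{\mathscr D}_{q^{-1}}y+\tau(x){\mathscr D}_{q}y$ symmetric with respect to $\langle\cdot,\cdot\rangle$; since $\mathcal{L}$ preserves $\mathbb{P}_{n}$ and a short computation gives that the coefficient of $x^{n}$ in $\mathcal{L}[x^{n}]$ equals $[n]_{q^{-1}}[n-1]_{q}+[n]_{q}/(1-q)=-\lambda_{n,q}$ --- using $[n]_{q^{-1}}[n-1]_{q}=-[n]_{q}[1-n]_{q}$ together with $\sigma''/2=1$ and $\tau'=1/(1-q)$ --- the polynomial $\mathcal{L}U_{n}^{(a)}+\lambda_{n,q}U_{n}^{(a)}$ has degree at most $n-1$ yet is orthogonal to $\mathbb{P}_{n-1}$, hence vanishes identically.

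I expect the only genuinely delicate point to be the vanishing of the boundary terms in the Jackson summations by parts at the endpoints $x=a$ and $x=1$, on which both the symmetry of $\mathcal{L}$ and the collapse of~(\ref{StruR}) to three terms ultimately depend; this is exactly where the specific shape of the Al-Salam--Carlitz weight is used, since $(qx,a^{-1}qx;q)_{\infty}$ is supported on the $q$-lattice $\{q^{k}\}_{k\ge 0}\cup\{aq^{k}\}_{k\ge 0}$ and its Pochhammer factors produce the required cancellation at the two ends, as in the general Hahn-class theory of~\cite{koek2010,NikUvSu-91}.
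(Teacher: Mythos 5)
The paper does not actually prove this proposition: all five items are background facts recalled from the literature, each with its citation (\cite{koek2010}, \cite{KS-JCAM01}, \cite{DA-JPA05}), so there is no in-paper argument to compare against. Your proposed derivation is nevertheless a sound and standard self-contained route, and the computations you sketch are correct where they can be checked: the functional equation $(1-xt)G(x,t)=(1-t)(1-at)G(x,qt)$ does follow from the three factorizations you list and does force the recurrence with $\beta_n=(a+1)q^n$, $\gamma_n=-aq^{n-1}(1-q^n)$; ${\mathscr D}_{q,x}G=\tfrac{t}{1-q}G$ gives ${\mathscr D}_qU_n^{(a)}=[n]_qU_{n-1}^{(a)}$ and the product $\prod_{i=0}^{k-1}[n-i]_q$ does equal $(q^{-n};q)_k(q-1)^{-k}q^{kn-\binom{k}{2}}$; the Pearson identity $\sigma(qx)w(qx)=aw(x)$, i.e.\ ${\mathscr D}_q[\sigma w]=\tau w$, is correct since $a-\sigma(x)=-x(x-a-1)$; and the eigenvalue identity $[n]_{q^{-1}}[n-1]_q=-[n]_q[1-n]_q$ holds because $[n]_{q^{-1}}=q^{1-n}[n]_q$. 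Two points remain thin. First, your argument establishes the recurrence for the coefficients $V_n$ of the product $(t,at;q)_\infty/(xt;q)_\infty$, but the proposition concerns the polynomials defined by the $_2\phi_1$ formula (\ref{ASP}); identifying $V_n$ with $U_n^{(a)}$ still requires expanding that product (two applications of the $q$-binomial theorem and a Cauchy product), or an independent verification that (\ref{ASP}) satisfies the same recurrence and initial conditions. Second, the boundary terms you flag are in fact easy to dispose of here: since $\sigma(x)=(x-1)(x-a)$ vanishes at both endpoints of $[a,1]$, the combination $\sigma w$ appearing in every Jackson summation by parts vanishes at $x=1$ and $x=a$, which is exactly the cancellation you anticipate. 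With those two remarks supplied, your outline closes into a complete proof.
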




\begin{proposition}[Christoffel-Darboux formula]
\label{S1-Proposition12} Let $\{U_{n}^{(a)}(x;q)\}_{n\geq 0}$ be the
sequence of Al-Salam--Carlitz I polynomials. If we denote the $n$-th
reproducing kernel by 
\begin{equation*}
K_{n,q}(x,y)=\sum_{k=0}^{n}\frac{U_{k}^{(a)}(x;q)\cdot U_{k}^{(a)}(y;q)}{%
||U_{k}^{(a)}||^{2}}.
\end{equation*}%
Then, for all $n\in \mathbb{N}$, it holds that 
\begin{equation}
K_{n,q}(x,y)=\frac{U_{n+1}^{\left( a\right) }\left( x;q\right)\cdot
U_{n}^{\left( a\right) }\left( y;q\right) -U_{n+1}^{\left( a\right) }\left(
y;q\right)\cdot U_{n}^{\left( a\right) }\left( x;q\right) }{\left(
x-y\right) ||U_{n}^{\left( a\right) }||^{2}}.  \label{CDarb}
\end{equation}
\end{proposition}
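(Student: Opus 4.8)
The plan is the classical telescoping argument, adapted to the $q$-setting, relying only on the three-term recurrence (\ref{ReR}) and the explicit squared-norm formula in Proposition \ref{S1-Proposition11}. First I would write (\ref{ReR}) at the point $x$ and at the point $y$, for a fixed index $k$ with $0\le k\le n$, multiply the first identity by $U_{k}^{(a)}(y;q)$ and the second by $U_{k}^{(a)}(x;q)$, and subtract. Since the diagonal terms $\beta_k U_k(x)U_k(y)$ cancel, this yields the ``level-$k$'' identity
\begin{equation*}
(x-y)\,U_{k}^{(a)}(x;q)U_{k}^{(a)}(y;q)=\bigl[U_{k+1}^{(a)}(x;q)U_{k}^{(a)}(y;q)-U_{k+1}^{(a)}(y;q)U_{k}^{(a)}(x;q)\bigr]-\gamma_{k}\bigl[U_{k}^{(a)}(x;q)U_{k-1}^{(a)}(y;q)-U_{k}^{(a)}(y;q)U_{k-1}^{(a)}(x;q)\bigr].
\end{equation*}

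The key algebraic step, and the only place where a short computation is needed, is to check that $\gamma_{k}=||U_{k}^{(a)}||^{2}/||U_{k-1}^{(a)}||^{2}$ for all $k\ge 1$. This follows directly from Proposition \ref{S1-Proposition11}(3): dividing the closed forms of $||U_{k}^{(a)}||^{2}$ and $||U_{k-1}^{(a)}||^{2}$, all the $(q,a,a^{-1}q;q)_\infty$ factors cancel, the Pochhammer ratio $(q;q)_{k}/(q;q)_{k-1}$ gives $1-q^{k}$, the power ratio $(-a)^{k}/(-a)^{k-1}$ gives $-a$, and $q^{\binom{k}{2}-\binom{k-1}{2}}=q^{k-1}$, so altogether the ratio equals $-aq^{k-1}(1-q^{k})=\gamma_{k}$, exactly as in (\ref{ReR}).

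With this identity in hand, I would divide the level-$k$ relation above by $||U_{k}^{(a)}||^{2}$. Setting
\begin{equation*}
F_{k}(x,y)=\frac{U_{k+1}^{(a)}(x;q)U_{k}^{(a)}(y;q)-U_{k+1}^{(a)}(y;q)U_{k}^{(a)}(x;q)}{||U_{k}^{(a)}||^{2}},
\end{equation*}
and using $\gamma_{k}/||U_{k}^{(a)}||^{2}=1/||U_{k-1}^{(a)}||^{2}$, the right-hand side becomes precisely $F_{k}(x,y)-F_{k-1}(x,y)$, so that $(x-y)\,U_{k}^{(a)}(x;q)U_{k}^{(a)}(y;q)/||U_{k}^{(a)}||^{2}=F_{k}(x,y)-F_{k-1}(x,y)$. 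Summing this telescoping identity over $k=0,1,\dots,n$ gives $(x-y)K_{n,q}(x,y)=F_{n}(x,y)-F_{-1}(x,y)$; the boundary term $F_{-1}$ vanishes because $U_{-1}^{(a)}(\cdot;q)\equiv 0$ by the initial conditions in (\ref{ReR}) (equivalently, the $\gamma_{0}$-term is absent since $\gamma_{0}=0$, so the $k=0$ step reads $(x-y)U_{0}U_{0}/||U_{0}||^{2}=F_{0}$ directly). This leaves exactly $(x-y)K_{n,q}(x,y)=F_{n}(x,y)$, which is (\ref{CDarb}) after dividing by $x-y$.

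There is no substantial obstacle here: the argument is entirely formal once the recurrence and the norm formula are available. The only point requiring care is the verification $\gamma_{k}=||U_{k}^{(a)}||^{2}/||U_{k-1}^{(a)}||^{2}$, i.e. the compatibility between the recurrence coefficients of the monic family and the explicit $q$-norms, together with the bookkeeping of the boundary term at $k=0$ where $\gamma_{0}$ and $U_{-1}^{(a)}$ both vanish.
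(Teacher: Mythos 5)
Your proof is correct. Note that the paper itself gives no proof of Proposition \ref{S1-Proposition12} (it is stated as a classical fact), so there is nothing to compare against; your telescoping argument from the recurrence (\ref{ReR}) is the standard one, and the only nontrivial ingredient, the identity $\gamma_{k}=||U_{k}^{(a)}||^{2}/||U_{k-1}^{(a)}||^{2}$, checks out against the squared-norm formula of Proposition \ref{S1-Proposition11}(3) exactly as you compute, with the $k=0$ boundary term handled correctly since $\gamma_{0}=0$.
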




Concerning the partial $q$-derivatives of $K_{n,q}(x,y)$, we use the
following notation 
\begin{eqnarray*}
K_{n,q}^{(i,j)}(x,y) &=&{\mathscr D}_{q,y}^{i}({\mathscr D}%
_{q,x}^{j}K_{n,q}(x,y)) \\
&=&\sum_{k=0}^{n}\frac{{\mathscr D}_{q}^{i}U_{k}^{(a)}(x;q)\cdot{\mathscr D}%
_{q}^{j}U_{k}^{(a)}(y;q)}{||U_{k}^{(a)}||^{2}}.
\end{eqnarray*}


Next, we provide a technical result that will be useful later on.


\begin{lemma}
\label{S1-LemmaKernel0j} Let $\{U_{n}^{(a)}(x;q)\}_{n\geq 0}$ be the
sequence of Al-Salam--Carlitz I polynomials of degree $n$. Then following
statements hold, for all $n\in \mathbb{N}$, 
\begin{equation}
K_{n-1,q}^{(0,j)}(x,y)={\mathcal{A}}_{n}(x,y)U_{n}^{(a)}(x;q)+{\mathcal{B}}%
_{n}(x,y)U_{n-1}^{(a)}(x;q),  \label{Kernel0j}
\end{equation}%
where 
\begin{equation*}
{\mathcal{A}}_{n}(x,y)=\frac{\left[ j\right] _{q}!}{||U_{n-1}^{(a)}||^{2}%
\left( x\boxminus _{q}y\right) ^{j+1}}\sum_{k=0}^{j}\frac{{\mathscr D}%
_{q}^{k}U_{n-1}^{(a)}(y;q)}{\left[ k\right] _{q}!}(x\boxminus _{q}y)^{k},
\end{equation*}%
and 
\begin{equation*}
{\mathcal{B}}_{n}(x,y)=-\frac{\left[ j\right] _{q}!}{||U_{n-1}^{(a)}||^{2}%
\left( x\boxminus _{q}y\right) ^{j+1}}\sum_{k=0}^{j}\frac{{\mathscr D}%
_{q}^{k}U_{n}^{(a)}(y;q)}{\left[ k\right] _{q}!}(x\boxminus _{q}y)^{k}.
\end{equation*}
\end{lemma}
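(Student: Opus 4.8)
The strategy is to start from the Christoffel--Darboux formula (\ref{CDarb}) for $K_{n-1,q}(x,y)$, namely
\begin{equation*}
K_{n-1,q}(x,y)=\frac{U_{n}^{(a)}(x;q)\,U_{n-1}^{(a)}(y;q)-U_{n}^{(a)}(y;q)\,U_{n-1}^{(a)}(x;q)}{(x-y)\,||U_{n-1}^{(a)}||^{2}},
\end{equation*}
and to apply the operator ${\mathscr D}_{q,y}^{j}$ to both sides, treating $x$ as a parameter. On the left this produces exactly $K_{n-1,q}^{(0,j)}(x,y)$. On the right, the factors $U_{n}^{(a)}(x;q)$ and $U_{n-1}^{(a)}(x;q)$ are constants with respect to $y$, so everything hinges on computing ${\mathscr D}_{q,y}^{j}$ of the two quotients $\dfrac{U_{n-1}^{(a)}(y;q)}{x-y}$ and $\dfrac{U_{n}^{(a)}(y;q)}{x-y}$. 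Once those are in hand, collecting the coefficient of $U_{n}^{(a)}(x;q)$ gives ${\mathcal A}_{n}(x,y)$ and the coefficient of $U_{n-1}^{(a)}(x;q)$ gives ${\mathcal B}_{n}(x,y)$, up to the sign and the $1/||U_{n-1}^{(a)}||^{2}$ factor already visible in (\ref{CDarb}).

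The key computational step is therefore a $q$-Leibniz expansion. Write $\dfrac{P(y)}{x-y}=P(y)\cdot h_{x}(y)$ with $h_{x}(y)=(x-y)^{-1}$, and apply the $q$-Leibniz rule (\ref{LRule}):
\begin{equation*}
{\mathscr D}_{q,y}^{j}\!\left[P(y)\,h_{x}(y)\right]=\sum_{k=0}^{j}\begin{bmatrix}k\\ j\end{bmatrix}_{q}({\mathscr D}_{q}^{k}P)(y)\cdot({\mathscr D}_{q,y}^{j-k}h_{x})(q^{k}y).
\end{equation*}
The point is that the $q$-derivatives of $h_{x}(y)=(x-y)^{-1}$ telescope nicely: one checks by induction that ${\mathscr D}_{q,y}^{m}\bigl[(x-y)^{-1}\bigr]$ is $[m]_{q}!$ times a reciprocal of a Jackson--Hahn--Cigler power of the form $(x\boxminus_{q}y)^{m+1}$ (this is the $q$-analogue of $\frac{d^{m}}{dy^{m}}(x-y)^{-1}=m!\,(x-y)^{-m-1}$, and it is precisely why the denominator $(x\boxminus_{q}y)^{j+1}$ and the prefactor $[j]_{q}!$ appear in the statement). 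Substituting this evaluation, converting the $q$-binomial coefficient $\begin{bmatrix}k\\ j\end{bmatrix}_{q}$ into $[j]_{q}!/([k]_{q}!\,[j-k]_{q}!)$, and carefully tracking how the argument shift $q^{k}y$ interacts with the Jackson--Hahn--Cigler power, one recovers exactly the sums $\sum_{k=0}^{j}\dfrac{{\mathscr D}_{q}^{k}P(y)}{[k]_{q}!}(x\boxminus_{q}y)^{k}$ appearing in ${\mathcal A}_{n}$ and ${\mathcal B}_{n}$.

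The main obstacle, and the part demanding genuine care rather than routine bookkeeping, is the precise computation of ${\mathscr D}_{q,y}^{m}\bigl[(x-y)^{-1}\bigr]$ together with the value of its argument shift under ${\mathscr D}_{q}^{j-k}h_{x}$ evaluated at $q^{k}y$: one must verify that all the powers of $q$ coming from the $\binom{k}{2}$-type factors in the Jackson--Hahn--Cigler identity and from the repeated argument rescalings (\ref{cadrule}) conspire to collapse cleanly, so that the two quotient-derivatives combine over the \emph{common} denominator $(x\boxminus_{q}y)^{j+1}$ with the stated coefficients. A clean way to organize this is to first prove the auxiliary identity
\begin{equation*}
{\mathscr D}_{q,y}^{j}\!\left[\frac{P(y)}{x-y}\right]=\frac{[j]_{q}!}{(x\boxminus_{q}y)^{j+1}}\sum_{k=0}^{j}\frac{({\mathscr D}_{q}^{k}P)(y)}{[k]_{q}!}(x\boxminus_{q}y)^{k}
\end{equation*}
as a standalone lemma valid for any polynomial $P$ (by induction on $j$, using the quotient form of the product rule (\ref{prodqD})), and then simply invoke it twice with $P=U_{n-1}^{(a)}$ and $P=U_{n}^{(a)}$; the result then follows immediately by linearity of ${\mathscr D}_{q,y}^{j}$ applied to (\ref{CDarb}).
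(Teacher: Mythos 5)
Your proposal is correct and follows essentially the same route as the paper: apply ${\mathscr D}_{q,y}^{j}$ to the Christoffel--Darboux formula, expand each quotient via the $q$-Leibniz rule together with ${\mathscr D}_{q,y}^{m}\bigl[(x-y)^{-1}\bigr]=[m]_{q}!/(x\boxminus_{q}y)^{m+1}$, and collapse the shifted powers $(x\boxminus_{q}q^{k}y)^{j-k+1}$ onto the common denominator $(x\boxminus_{q}y)^{j+1}$. The standalone auxiliary identity you isolate is exactly the intermediate formula the paper derives inline, so the two arguments coincide.
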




\begin{proof}
Applying the $j$-th $q$-derivative to (\ref{CDarb}) with respect to $y$
yields%
\begin{equation*}
K_{q,n-1}^{\left( 0,j\right) }\left( x,y\right) =\frac{1}{%
||U_{n-1}^{(a)}||^{2}}\times
\end{equation*}%
\begin{equation}
\left( U_{n}^{(a)}(x;q){\mathscr D}_{q,y}^{j}\left( \frac{U_{n-1}^{(a)}(y;q)%
}{x-y}\right) -U_{n-1}^{(a)}(x;q){\mathscr D}_{q,y}^{j}\left( \frac{%
U_{n}^{(a)}(y;q)}{x-y}\right) \right) .  \label{K0jI}
\end{equation}%
Then, using the $q$-analogue of Leibniz' rule (\ref{LRule}) and%
\begin{equation*}
{\mathscr D}_{q,y}^{n}\left( \frac{1}{x-y}\right) =\frac{\left[ n\right]
_{q}!}{\left( x\boxminus _{q}y\right) ^{n+1}},
\end{equation*}%
we deduce%
\begin{eqnarray*}
{\mathscr D}_{q,y}^{j}\left( \frac{U_{n-1}^{(a)}(y;q)}{x-y}\right)
&=&\sum_{k=0}^{j}%
\begin{bmatrix}
j \\ 
k%
\end{bmatrix}%
_{q}{\mathscr D}_{q}^{k}U_{n-1}^{(a)}(y;q)\cdot{\mathscr D}%
_{q,y}^{j-k}\left( \frac{1}{x-q^{k}y}\right) \\
&=&\sum_{k=0}^{j}\frac{\left[ j\right] _{q}!}{\left[ k\right] _{q}!}\frac{{%
\mathscr D}_{q}^{k}U_{n-1}^{(a)}(y;q)}{\left( x\boxminus _{q}q^{k}y\right)
^{j-k+1}}.
\end{eqnarray*}%
Next, it is easy to check that%
\begin{equation*}
(x\boxminus _{q}q^{k}y)^{j-k+1}=\frac{(x\boxminus _{q}y)^{j+1}}{(x\boxminus
_{q}y)^{k}},
\end{equation*}%
and therefore we have%
\begin{equation*}
{\mathscr D}_{q,y}^{j}\left( \frac{U_{n-1}^{(a)}(y;q)}{x-y}\right) =\frac{%
\left[ j\right] _{q}!}{\left( x\boxminus _{q}y\right) ^{j+1}}\sum_{k=0}^{j}%
\frac{{\mathscr D}_{q}^{k}U_{n-1}^{(a)}(y;q)}{\left[ k\right] _{q}!}\left(
x\boxminus _{q}y\right) ^{k}.
\end{equation*}%
Finally, combining all the above with (\ref{K0jI}) we obtain (\ref{Kernel0j}%
). This completes the proof.
\end{proof}

\begin{remark}
We observe that%
\begin{equation*}
K_{q,n-1}^{\left( 0,j\right) }\left( x,\alpha \right) =\frac{\left[ j\right]
_{q}!}{||U_{n-1}^{(a)}||^{2}\left( x\boxminus _{q}y\right) ^{j+1}}\left(
U_{n}^{(a)}(x;q)\cdot Q_{q,j}(x,\alpha
,U_{n-1}^{(a)})-U_{n-1}^{(a)}(x;q)\cdot Q_{q,j}(x,\alpha
,U_{n}^{(a)})\right) ,
\end{equation*}%
where $Q_{q,j}(x,\alpha ,U_{n-1}^{(a)})$ and $Q_{q,j}(x,\alpha ,U_{n}^{(a)})$
denote the $q$-Taylor polynomials of degree $j$ of the polynomials $%
U_{n-1}^{(a)}(x;q)$ y $U_{n}^{(a)}(x;q)$ at $x=\alpha $, respectively.
\end{remark}



\section{Connection formulas and hypergeometric representation}

\label{S3-ConnForm}



In this section we define the Al-Salam--Carlitz I-Sobolev type polynomials
of higher order $\{\mathbb{U}_{n}^{(a)}(x;q)\}_{n\geq 0}$, and describe
different relations which relate them to the Al-Salam--Carlitz I
polynomials. These links will be useful in the sequel. Al-Salam--Carlitz
I-Sobolev type polynomials are defined to be orthogonal with respect to
Sobolev-type inner product 
\begin{eqnarray}
\left\langle f,g\right\rangle _{\lambda ,\mu } &=&{%
\int_{a}^{1}f(x;q)g(x;q)(qx,a^{-1}qx;q)_{\infty }d_{q}x}  \label{piSob} \\
&&\quad +\lambda ({\mathscr D}_{q}^{j}f)\left( a;q\right) \cdot ({\mathscr D}%
_{q}^{j}g)\left( a;q\right) +\mu ({\mathscr D}_{q}^{j}f)\left( 1;q\right)
\cdot ({\mathscr D}_{q}^{j}g)\left( 1;q\right) ,  \notag
\end{eqnarray}
where $a<0$, $\lambda ,\mu \in \mathbb{R}_{+}$, and $j\in \mathbb{N}$, $%
j\geq 1$.

In a first approach, we express $\{\mathbb{U}_{n}^{(a)}(x;q)\}_{n\geq 0}$ in
terms of the Al-Salam--Carlitz I polynomials $\{U_{n}^{(a)}(x;q)\}_{n\geq 0}$%
, the kernel polynomials and their corresponding derivatives. Moreover, we
obtain a representation of the proposed polynomials as hypergeometric
functions. Let us depart from the Fourier expansion 
\begin{equation*}
\mathbb{U}_{n}^{(a)}(x;q)=U_{n}^{(a)}(x;q)+%
\sum_{k=0}^{n-1}a_{n,k}U_{k}^{(a)}(x;q).
\end{equation*}
In view of (\ref{piSob}), and considering the orthogonality properties for $%
U_{n}^{(a)}(x;q)$, for $0\leq k\leq n-1$, the coefficients in the previous
expansion are given by 
\begin{equation*}
a_{n,k}=-\frac{\lambda {\mathscr D}_{q}^{j}\mathbb{U}_{n}^{(a)}(a;q)\cdot{%
\mathscr D}_{q}^{j}U_{n}^{(a)}(a;q)+\mu {\mathscr D}_{q}^{j}\mathbb{U}%
_{n}^{(a)}(1;q)\cdot{\mathscr D}_{q}^{j}U_{n}^{(a)}(1;q)}{||U_{k}^{(a)}||^{2}%
}.
\end{equation*}%
Thus%
\begin{equation*}
\mathbb{U}_{n}^{(a)}(x;q)=U_{n}^{(a)}(x;q)-\lambda {\mathscr D}_{q}^{j}%
\mathbb{U}_{n}^{(a)}(a;q)\cdot K_{n-1,q}^{(0,j)}(x,a)-\mu {\mathscr D}%
_{q}^{j}\mathbb{U}_{n}^{(a)}(1;q)\cdot K_{n-1,q}^{(0,j)}(x,1).
\end{equation*}%
After some manipulations, we obtain a linear system $AX=b$ with two
unknowns, namely ${\mathscr D}_{q}^{j}\mathbb{U}_{n}^{(a)}(a;q)$ and ${%
\mathscr D}_{q}^{j}\mathbb{U}_{n}^{(a)}(1;q)$, where 
\begin{equation*}
A=%
\begin{pmatrix}
1+\lambda K_{n-1,q}^{(j,j)}(a,a) & \mu K_{n-1,q}^{(j,j)}(a,1) \\ 
\lambda K_{n-1,q}^{(j,j)}(1,a) & 1+\mu K_{n-1,q}^{(j,j)}(1,1)%
\end{pmatrix}%
,
\end{equation*}%
and 
\begin{equation*}
X=({\mathscr D}_{q}^{j}\mathbb{U}_{n}^{(a)}(a;q),\,{\mathscr D}_{q}^{j}%
\mathbb{U}_{n}^{(a)}(1;q))^{T},\quad b=({\mathscr D}%
_{q}^{j}U_{n}^{(a)}(a;q),\,{\mathscr D}_{q}^{j}U_{n}^{(a)}(1;q))^{T}.
\end{equation*}%
Cramer's rule yields%
\begin{equation}  \label{e744}
\mathbb{U}_{n}^{(a)}(x;q)=U_{n}^{(a)}(x;q)-\lambda
K_{n-1,q}^{(0,j)}(x,a)\cdot\Delta _{j,n}^{(1)}(a)-\mu
K_{n-1,q}^{(0,j)}(x,1)\cdot\Delta _{j,n}^{(2)}(a),
\end{equation}%
where%
\begin{equation*}
\Delta _{j,n}^{(i)}(a)=%
\begin{cases}
\det (A)^{-1}\det 
\begin{pmatrix}
{\mathscr D}_{q}^{j}U_{n}^{(a)}(a;q) & \mu K_{n-1,q}^{(j,j)}(a,1) \\ 
{\mathscr D}_{q}^{j}U_{n}^{(a)}(1;q) & 1+\mu K_{n-1,q}^{(j,j)}(1,1)%
\end{pmatrix}%
, & \mbox{if }i=1, \\ 
&  \\ 
\det (A)^{-1}\det 
\begin{pmatrix}
1+\lambda K_{n-1,q}^{(j,j)}(a,a) & {\mathscr D}_{q}^{j}U_{n}^{(a)}(a;q) \\ 
\lambda K_{n-1,q}^{(j,j)}(1,a) & {\mathscr D}_{q}^{j}U_{n}^{(a)}(1;q)%
\end{pmatrix}%
, & \mbox{if }i=2,%
\end{cases}%
\end{equation*}%
Hence, we obtain a first connection formula, namely 
\begin{equation}
\mathbb{U}_{n}^{(a)}(x;q)={\mathcal{C}}_{1,n}(x)\cdot U_{n}^{(a)}(x;q)+{%
\mathcal{D}}_{1,n}(x) \cdot U_{n-1}^{(a)}(x;q),  \label{ConexF_I}
\end{equation}%
where 
\begin{equation*}
{\mathcal{C}}_{1,n}(x)=1-\lambda \Delta _{j,n}^{(1)}(a){\mathcal{A}}%
_{n}(x,a)-\mu \Delta _{j,n}^{(2)}(a){\mathcal{A}}_{n}(x,1),
\end{equation*}%
and 
\begin{equation*}
{\mathcal{D}}_{1,n}(x)=-\lambda \Delta _{j,n}^{(1)}(a){\mathcal{B}}%
_{n}(x,a)-\mu \Delta _{j,n}^{(2)}(a){\mathcal{B}}_{n}(x,1).
\end{equation*}
The previous connection formula is obtained after the application of Lemma %
\ref{S1-LemmaKernel0j}, which yields 
\begin{equation*}
K_{n-1,q}^{(0,j)}(x,a)={\mathcal{A}}_{n}(x,a)\cdot U_{n}^{(a)}(x;q)+{%
\mathcal{B}}_{n}(x,a)\cdot U_{n-1}^{(a)}(x;q),
\end{equation*}
together with 
\begin{equation*}
K_{n-1,q}^{(0,j)}(x,1)={\mathcal{A}}_{n}(x,1)\cdot U_{n}^{(a)}(x;q)+{%
\mathcal{B}}_{n}(x,1)\cdot U_{n-1}^{(a)}(x;q).
\end{equation*}
Therefore, from (\ref{e744}) we get%
\begin{eqnarray*}
\mathbb{U}_{n}^{(a)}(x;q) &=&U_{n}^{(a)}(x;q)-\lambda [{\mathcal{A}}%
_{n}(x,a)\cdot U_{n}^{(a)}(x;q)+{\mathcal{B}}_{n}(x,a)\cdot
U_{n-1}^{(a)}(x;q)]\Delta _{j,n}^{(1)}(a) \\
&&\quad -\mu [{\mathcal{A}}_{n}(x,a)\cdot U_{n}^{(a)}(x;q)+{\mathcal{B}}%
_{n}(x,a)\cdot U_{n-1}^{(a)}(x;q)]\Delta _{j,n}^{(2)}(a) \\
&=&[1-\lambda \Delta _{j,n}^{(1)}(a){\mathcal{A}}_{n}(x,a)-\mu \Delta
_{j,n}^{(2)}(a){\mathcal{A}}_{n}(x,1)]U_{n}^{(a)}(x;q) \\
&&\quad +[-\lambda \Delta _{j,n}^{(1)}(a){\mathcal{B}}_{n}(x,a)-\mu \Delta
_{j,n}^{(2)}(a){\mathcal{B}}_{n}(x,1)]U_{n-1}^{(a)}(x;q),
\end{eqnarray*}%
leading to (\ref{ConexF_I}). At this point, we provide another relation
between the two families of polynomials, which will be applied in Theorem~%
\ref{sr_theorem}. More precisely, from (\ref{ConexF_I}) and the recurrence
relation (\ref{ReR}) we have that 
\begin{equation}
\mathbb{U}_{n-1}^{(a)}(x;q)={\mathcal{C}}_{2,n}(x) \cdot U_{n}^{(a)}(x;q)+{%
\mathcal{D}}_{2,n}(x)\cdot U_{n-1}^{(a)}(x;q),  \label{ConexF_II}
\end{equation}%
where%
\begin{equation*}
{\mathcal{C}}_{2,n}(x)=-\frac{{\mathcal{D}}_{1,n-1}(x)}{\gamma _{n-1}},
\end{equation*}%
and%
\begin{equation*}
{\mathcal{D}}_{2,n}(x)={\mathcal{C}}_{1,n-1}(x)+{\mathcal{C}}_{2,n}(x)\left(
\beta _{n-1}-x\right) .
\end{equation*}%
From (\ref{ConexF_I})--(\ref{ConexF_II}) we deduce 
\begin{equation}
U_{n}^{(a)}(x;q)=\det (B_{n}(x))^{-1}\det 
\begin{pmatrix}
\mathbb{U}_{n}^{(a)}(x;q) & \mathbb{U}_{n-1}^{(a)}(x;q) \\ 
{\mathcal{D}}_{1,n}(x) & {\mathcal{D}}_{2,n}(x)%
\end{pmatrix}
\label{ConexF_III}
\end{equation}%
and 
\begin{equation}
U_{n-1}^{(a)}(x;q)=-\det (B_{n}(x))^{-1}\det 
\begin{pmatrix}
\mathbb{U}_{n}^{(a)}(x;q) & \mathbb{U}_{n-1}^{(a)}(x;q) \\ 
{\mathcal{C}}_{1,n}(x) & {\mathcal{C}}_{2,n}(x)%
\end{pmatrix}
\label{ConexF_IV}
\end{equation}%
where 
\begin{equation*}
B_{n}(x)=%
\begin{pmatrix}
{\mathcal{C}}_{1,n}(x) & {\mathcal{C}}_{2,n}(x) \\ 
{\mathcal{D}}_{1,n}(x) & {\mathcal{D}}_{2,n}(x)%
\end{pmatrix}%
.
\end{equation*}

Finally, we focus our attention on the representation of $\mathbb{U}%
_{n}^{(a)}(x;q)$ as hypergeometric functions. A similar analysis to that
carried out in \cite[Th. 2]{CS-JDE18}, yields the following result



\begin{proposition}[Hypergeometric character]
\label{S3-Theorem31}For $a<0$, the Al-Salam--Carlitz I-Sobolev type
polynomials of higher order $\{\mathbb{U}_{n}^{(a)}(x;q)\}_{n\geq 0}$, have
the following hypergeometric representation:%
\begin{equation}
\mathbb{U}_{n}^{(a)}(x;q)=(-a)^{n}\frac{{\mathcal{D}}_{1,n}(x)(1-\psi
_{n}(x)q^{-1})q^{\binom{n}{2}-n+2}}{a[n]_{q}\psi _{n}(x)(1-q)}\,_{3}\phi
_{2}\left( 
\begin{array}{c}
q^{-n},x^{-1},\psi _{n}(x) \\ 
0,\psi _{n}(x)q^{-1}%
\end{array}%
;q,a^{-1}qx\right)  \label{ASPTSHR}
\end{equation}%
where $\psi _{n}(x)=((1-q)\vartheta _{n}(x)+1)^{-1}$ and%
\begin{equation*}
\vartheta _{n}(x)=\frac{aq^{n-2}[n]_{q}{\mathcal{C}}_{1,n}(x)}{{\mathcal{D}}%
_{1,n}(x)}-[n-1]_{q}.
\end{equation*}
\end{proposition}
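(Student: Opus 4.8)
The plan is to start from the connection formula (\ref{ConexF_I}), which expresses $\mathbb{U}_{n}^{(a)}(x;q)$ as ${\mathcal{C}}_{1,n}(x)U_{n}^{(a)}(x;q)+{\mathcal{D}}_{1,n}(x)U_{n-1}^{(a)}(x;q)$, and to substitute the explicit $_{2}\phi_{1}$ representation (\ref{ASP}) for each of the two Al-Salam--Carlitz I polynomials. The key observation is that $U_{n}^{(a)}(x;q)$ and $U_{n-1}^{(a)}(x;q)$ are consecutive in degree, so their $_{2}\phi_{1}$ series have closely related terms: writing both series with summation index $k$, the $k$-th coefficient of $U_{n-1}^{(a)}$ differs from that of $U_{n}^{(a)}$ by an explicitly computable ratio coming from $(q^{-n};q)_{k}$ versus $(q^{-(n-1)};q)_{k}$ and the prefactor $(-a)^{n}q^{\binom{n}{2}}$ versus $(-a)^{n-1}q^{\binom{n-1}{2}}$. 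The idea is therefore to factor out $U_{n}^{(a)}(x;q)$'s series and rewrite the linear combination ${\mathcal{C}}_{1,n}(x)U_{n}^{(a)}+{\mathcal{D}}_{1,n}(x)U_{n-1}^{(a)}$ as a single sum whose $k$-th term is the $_{2}\phi_{1}$-term of $U_{n}^{(a)}$ multiplied by an affine function of $q^{k}$ (or of $[k]_{q}$).

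Next I would recognize that multiplying a $_{2}\phi_{1}$ series termwise by a factor of the form $(1-cq^{k})/(1-c)$ — equivalently, inserting a numerator parameter $c$ and a denominator parameter $cq^{-1}$ in the sense that $(c;q)_{k}/(cq^{-1};q)_{k}=(1-cq^{k-1})/(1-cq^{-1})$ — converts a $_{2}\phi_{1}$ into a $_{3}\phi_{2}$ with exactly that extra pair of parameters. Concretely, one chooses the parameter $c=\psi_{n}(x)$ so that the affine factor produced by the combination of ${\mathcal{C}}_{1,n}(x)$ and ${\mathcal{D}}_{1,n}(x)$ matches $(1-\psi_{n}(x)q^{k-1})$ up to the $k$-independent normalization; solving for which $c$ does this is precisely what forces the definition $\psi_{n}(x)=((1-q)\vartheta_{n}(x)+1)^{-1}$ with $\vartheta_{n}(x)=aq^{n-2}[n]_{q}{\mathcal{C}}_{1,n}(x)/{\mathcal{D}}_{1,n}(x)-[n-1]_{q}$. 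The remaining $k$-independent prefactor — collecting $(-a)^{n}$, the $q^{\binom{n}{2}}$-type powers, the $[n]_{q}$ coming from the ratio of $(q^{-n};q)_{k}$-parts, the factor $a^{-1}qx$ shift relating the two series arguments, and the normalization $(1-\psi_{n}(x)q^{-1})^{-1}$ from the inserted parameter pair — is then bookkept to yield the stated coefficient $(-a)^{n}{\mathcal{D}}_{1,n}(x)(1-\psi_{n}(x)q^{-1})q^{\binom{n}{2}-n+2}/(a[n]_{q}\psi_{n}(x)(1-q))$.

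In more detail, the steps in order are: (i) substitute (\ref{ASP}) for $U_{n}^{(a)}(x;q)$ and $U_{n-1}^{(a)}(x;q)$ into (\ref{ConexF_I}); (ii) align the two series under a common summation, extracting the ratio of $k$-th terms as an affine expression $\alpha_{n}(x)+\beta_{n}(x)q^{k}$ (or in $[k]_{q}$) with $\alpha_{n},\beta_{n}$ built from ${\mathcal{C}}_{1,n}(x)$, ${\mathcal{D}}_{1,n}(x)$, $[n]_{q}$, $[n-1]_{q}$, $a$, $q$; (iii) set $\vartheta_{n}(x)$ and $\psi_{n}(x)$ as above so that this affine expression equals a constant times $(1-\psi_{n}(x)q^{k})$; (iv) use the identity $(\psi_{n}(x);q)_{k}/(\psi_{n}(x)q^{-1};q)_{k}=(1-\psi_{n}(x)q^{k-1})/(1-\psi_{n}(x)q^{-1})$ to absorb this factor as a new numerator/denominator parameter pair, producing the $_{3}\phi_{2}$; (v) consolidate all $k$-independent constants into the claimed prefactor. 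A sanity check at $\lambda=\mu=0$ (where ${\mathcal{C}}_{1,n}\equiv 1$, ${\mathcal{D}}_{1,n}\equiv 0$, so $\mathbb{U}_{n}^{(a)}=U_{n}^{(a)}$ and the third parameter should collapse) should confirm the normalization.

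The main obstacle I expect is the bookkeeping in step (iii)--(v): one must verify that the affine factor coming out of the connection formula is genuinely of the form (constant)$\,\cdot(1-\psi_{n}(x)q^{k})$ for a single $x$-dependent $\psi_{n}(x)$ — i.e. that the ratio ${\mathcal{C}}_{1,n}(x)/{\mathcal{D}}_{1,n}(x)$ enters in exactly the way that makes a $_{3}\phi_{2}$ (and not a more complicated $_{4}\phi_{3}$) appear — and then to track the powers of $q$ and $a$ through the shift in the hypergeometric argument so that the exponent $\binom{n}{2}-n+2$ and the factor $1/(a[n]_{q}(1-q))$ come out correctly. This is the genuinely computational heart of the proof; the reference to "a similar analysis to \cite[Th.~2]{CS-JDE18}" suggests the same mechanism works here, the only new feature being that ${\mathcal{C}}_{1,n}$ and ${\mathcal{D}}_{1,n}$ now carry contributions from both boundary points $a$ and $1$ and from the $j$-th $q$-derivative, which does not change the structure of the argument.
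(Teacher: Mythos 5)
Your proposal follows essentially the same route as the paper: substitute the $_{2}\phi_{1}$ representation (\ref{ASP}) into the connection formula (\ref{ConexF_I}), use the relation $(q^{1-n};q)_{k}=-\tfrac{q}{[n]_{q}}([k-1]_{q}-[n-1]_{q})(q^{-n};q)_{k}$ to align the two series into a single sum with an affine factor $[k-1]_{q}+\vartheta_{n}(x)$, and then absorb that factor via $[k-1]_{q}+\vartheta _{n}(x)=\frac{1-\psi _{n}(x)q^{-1}}{\psi _{n}(x)(1-q)}\frac{(\psi _{n}(x);q)_{k}}{(\psi _{n}(x)q^{-1};q)_{k}}$ as the extra numerator/denominator parameter pair of a $_{3}\phi_{2}$. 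This matches the paper's proof step for step, including the identification of $\psi_{n}(x)$ and the final bookkeeping of the prefactor.
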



\begin{proof}
For $n=0$, a trivial verification shows that (\ref{ASPTSHR}) yields $\mathbb{%
U}_{0}^{(a)}(x;q)=1$. For $n\geq 1$, combining (\ref{ASP}) with (\ref%
{ConexF_I}) and the relations%
\begin{equation}  \label{Ident0}
(q^{1-n};q)_{k}=-\frac{q}{[n]_{q}}([k-1]_{q}-[n-1]_{q})(q^{-n};q)_{k}
\end{equation}
where, in order to improve the compactness and readability of the
expressions involved, we define 
\begin{equation*}
[-1]_{q}:=\frac{1-q^{-1}}{1-q}=-q^{-1},
\end{equation*}
and 
\begin{equation}  \label{Ident}
(q^{-n};q)_{k}=0,\quad n<k,
\end{equation}%
yields%
\begin{equation}  \label{EqProve}
\mathbb{U}_{n}^{(a)}(x;q)=-(-a)^{n-1}q^{\binom{n}{2}-n+2}\frac{{\mathcal{D}}%
_{1,n}(x)}{[n]_{q}}\sum_{k=0}^{n}([k-1]_{q}+\vartheta
_{n}(x))(q^{-n};q)_{k}(x^{-1};q)_{k}\frac{(a^{-1}qx)^{k}}{(q;q)_{k}}.
\end{equation}
More precisely, (\ref{ASP}) together with (\ref{ConexF_I}) yield%
\begin{eqnarray*}
\mathbb{U}_{n}^{(a)}(x;q) &=&(-a)^{n}q^{\binom{n}{2}}{\mathcal{C}}%
_{1,n}(x)\sum_{k=0}^{\infty }\frac{(q^{-n};q)_{k}(x^{-1};q)_{k}}{(q;q)_{k}}%
(a^{-1}qx)^{k} \\
&&+(-a)^{n-1}q^{\binom{n}{2}-n+1}{\mathcal{D}}_{1,n}(x)\sum_{k=0}^{\infty }%
\frac{(q^{1-n};q)_{k}(x^{-1};q)_{k}}{(q;q)_{k}}(a^{-1}qx)^{k}.
\end{eqnarray*}
Taking into account (\ref{Ident}), the previous expression turns into%
\begin{eqnarray*}
\mathbb{U}_{n}^{(a)}(x;q) &=&(-a)^{n}q^{\binom{n}{2}}{\mathcal{C}}%
_{1,n}(x)\sum_{k=0}^{n}\frac{(q^{-n};q)_{k}(x^{-1};q)_{k}}{(q;q)_{k}}%
(a^{-1}qx)^{k} \\
&&+(-a)^{n-1}q^{\binom{n}{2}-n+1}{\mathcal{D}}_{1,n}(x)\sum_{k=0}^{n-1}\frac{%
(q^{1-n};q)_{k}(x^{-1};q)_{k}}{(q;q)_{k}}(a^{-1}qx)^{k},
\end{eqnarray*}%
and (\ref{Ident0}) now leads to%
\begin{eqnarray*}
\mathbb{U}_{n}^{(a)}(x;q) &=&(-a)^{n}q^{\binom{n}{2}}{\mathcal{C}}%
_{1,n}(x)\sum_{k=0}^{n}\frac{(q^{-n};q)_{k}(x^{-1};q)_{k}}{(q;q)_{k}}%
(a^{-1}qx)^{k} \\
&&-(-a)^{n-1}q^{\binom{n}{2}-n+2}\frac{{\mathcal{D}}_{1,n}(x)}{[n]_{q}}%
\sum_{k=0}^{n}([k-1]_{q}-[n-1]_{q})\frac{(q^{-n};q)_{k}(x^{-1};q)_{k}}{%
(q;q)_{k}}(a^{-1}qx)^{k}.
\end{eqnarray*}%
Finally, a rearrangement of the terms in the sums of the previous
expression, leads to (\ref{EqProve}).

On the other hand, after some straightforward calculations we get%
\begin{equation*}
[k-1]_{q}+\vartheta _{n}(x)=\frac{1-\psi _{n}(x)q^{-1}}{\psi _{n}(x)(1-q)}%
\frac{(\psi _{n}(x);q)_{k}}{(\psi _{n}(x)q^{-1};q)_{k}}.
\end{equation*}%
Therefore%
\begin{equation*}
\mathbb{U}_{n}^{(a)}(x;q)=-(-a)^{n-1}\frac{{\mathcal{D}}_{1,n}(x)(1-\psi
_{n}(x)q^{-1})q^{\binom{n}{2}-n+2}}{[n]_{q}\psi _{n}(x)(1-q)}\sum_{k=0}^{n}%
\frac{(q^{-n};q)_{k}(x^{-1};q)_{k}(\psi _{n}(x);q)_{k}}{(\psi
_{n}(x)q^{-1};q)_{k}}\frac{(a^{-1}qx)^{k}}{(q;q)_{k}}
\end{equation*}
which coincides with (\ref{ASPTSHR}). This completes the proof.
\end{proof}



\begin{remark}
Notice that one recovers (\ref{ASP}) from (\ref{ASPTSHR}) after $\lambda
\rightarrow 0$ and $\mu \rightarrow 0$. One also recovers \cite[(23), p. 13]%
{CS-JDE18} for $j=1$, and $\lambda ,\mu >0$ in (\ref{ASPTSHR}).
\end{remark}



\section{Ladder operators and a three term recurrence formula}

\label{S4-LOp}



In this section we find several structure relations associated to $\{\mathbb{%
U}_{n}^{(a)}(x;q)\}_{n\geq 0}$. It is worth mentioning that such relations
can be grouped in two depending on nature of the action of the $q-$%
derivative involved in the relation (see Theorem~\ref{sr_theorem}). In two
of them, such $q-$derivative is constructed by means of a $q-$dilation
operator ($\ell=-1$) whether in the other two, a $q-$contraction operator
determines the $q-$derivative ($\ell=1$). The \textit{ladder (creation and
annihilation) operators} are obtained in Proposition~\ref{S4-PropLadder}, as
well as the three-term recurrence relations of Theorem~\ref{S4-Theor3TRR-RC}%
, satisfied by $\{\mathbb{U}_{n}^{(a)}(x;q)\}_{n\geq 0}$. These two results
are also stated in terms of the duality provided by the choice of a $q-$%
dilating or $q-$contracting derivation.



The structure relations stated in Theorem~\ref{sr_theorem} lean on the
following result.

\begin{lemma}
\label{LemmaLadder} Let $\{\mathbb{U}_{n}^{(a)}(x;q)\}_{n\geq 0}$ be the
sequence of Al-Salam--Carlitz I-Sobolev type polynomials of degree $n$.
Then, following statements hold, for $\ell =-1,1$, 
\begin{equation}
\sigma _{\ell }(x){\mathscr D}_{q^{\ell }}\mathbb{U}_{n}^{(a)}(x;q)={%
\mathcal{E}}_{1+2\delta _{\ell ,1},n}(x)\cdot U_{n}^{(a)}(x;q)+{\mathcal{F}}%
_{1+2\delta _{\ell ,1},n}(x)\cdot U_{n-1}^{(a)}(x;q),  \label{CFD1}
\end{equation}%
and 
\begin{equation}
\sigma _{\ell }(x){\mathscr D}_{q^{\ell }}\mathbb{U}_{n-1}^{(a)}(x;q)={%
\mathcal{E}}_{2+2\delta _{\ell ,1},n}(x)\cdot U_{n}^{(a)}(x;q)+{\mathcal{F}}%
_{2+2\delta _{\ell ,1},n}(x)\cdot U_{n-1}^{(a)}(x;q),  \label{CFD2}
\end{equation}%
where $\sigma _{\ell }(x)=\sigma (x)$ for $\ell =-1$ and $\sigma _{\ell
}(x)=1$ otherwise. Moreover, 
\begin{equation*}
{\mathcal{E}}_{1,n}(x)=(\overline{\alpha }_{n}(x-\beta _{n})+\overline{\beta 
}_{n}){\mathcal{C}}_{1,n}(q^{-1}x)+\sigma (x){\mathscr D}_{q^{-1}}{\mathcal{C%
}}_{1,n}(x)+(\overline{\alpha }_{n-1}-\overline{\gamma }_{n-1}\gamma
_{n-1}^{-1}){\mathcal{D}}_{1,n}(q^{-1}x),
\end{equation*}%
\begin{equation*}
{\mathcal{F}}_{1,n}(x)=(\overline{\gamma }_{n}-\overline{\alpha }_{n}\gamma
_{n}){\mathcal{C}}_{1,n}(q^{-1}x)+\sigma (x){\mathscr D}_{q^{-1}}{\mathscr D}%
_{1,n}(x)+(\overline{\beta }_{n-1}+\overline{\gamma }_{n-1}\gamma
_{n-1}^{-1}(x-\beta _{n-1})){\mathcal{D}}_{1,n}(q^{-1}x),
\end{equation*}%
\begin{equation*}
{\mathcal{E}}_{3,n}(x)={\mathscr D}_{q}{\mathcal{C}}_{1,n}(x)-[n-1]_{q}%
\gamma _{n-1}^{-1}{\mathcal{D}}_{1,n}(qx),
\end{equation*}%
\begin{equation*}
{\mathcal{F}}_{3,n}(x)=[n]_{q}{\mathcal{C}}_{1,n}(qx)+[n-1]_{q}\gamma
_{n-1}^{-1}(x-\beta _{n-1}){\mathcal{D}}_{1,n}(qx)+{\mathscr D}_{q}{\mathcal{%
D}}_{1,n}(x),
\end{equation*}%
\begin{equation*}
{\mathcal{E}}_{2+2\delta _{\ell ,1},n}(x)=-\frac{{\mathcal{F}}_{1+2\delta
_{\ell ,1},n-1}(x)}{\gamma _{n-1}},
\end{equation*}%
and 
\begin{equation*}
{\mathcal{F}}_{2+2\delta _{\ell ,1},n}(x)={\mathcal{E}}_{1+2\delta _{\ell
,1},n-1}(x)+{\mathcal{E}}_{2+2\delta _{\ell ,1},n}(x)(\beta _{n-1}-x).
\end{equation*}%
Here, $\delta _{m,n}$ denote the Kronecker delta function.
\end{lemma}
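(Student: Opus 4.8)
The plan is to prove the two identities \eqref{CFD1} and \eqref{CFD2} for the two values $\ell=\pm1$ separately, reducing each to the connection formulas \eqref{ConexF_I}--\eqref{ConexF_II} together with the standard Al-Salam--Carlitz I structure data in Proposition~\ref{S1-Proposition11}. First I would treat $\ell=-1$. Starting from the connection formula \eqref{ConexF_I}, namely $\mathbb{U}_{n}^{(a)}(x;q)={\mathcal{C}}_{1,n}(x)U_{n}^{(a)}(x;q)+{\mathcal{D}}_{1,n}(x)U_{n-1}^{(a)}(x;q)$, I apply $\sigma(x){\mathscr D}_{q^{-1}}$ to both sides. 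Using the product rule \eqref{prodqD} for ${\mathscr D}_{q^{-1}}$ (with the second form of \eqref{prodqD}, which is the one adapted to the dilation $z\mapsto q^{-1}z$ hidden in ${\mathscr D}_{q^{-1}}$), I get terms of the shape $\sigma(x)({\mathscr D}_{q^{-1}}{\mathcal{C}}_{1,n})(x)\,U_{n}^{(a)}(q^{-1}x;q)$, ${\mathcal{C}}_{1,n}(q^{-1}x)\,\sigma(x){\mathscr D}_{q^{-1}}U_{n}^{(a)}(x;q)$, and the analogous two terms with ${\mathcal{D}}_{1,n}$ and $U_{n-1}^{(a)}$. Into the factors $\sigma(x){\mathscr D}_{q^{-1}}U_{n}^{(a)}(x;q)$ I substitute the Al-Salam--Carlitz structure relation \eqref{StruR}, and into $U_{n}^{(a)}(q^{-1}x;q)$, $U_{n-1}^{(a)}(q^{-1}x;q)$ I use the recurrence \eqref{ReR} (together with \eqref{ConexF_II} where an ${\mathbb U}_{n-1}$ must be re-expanded back in terms of $U_n,U_{n-1}$, which is exactly where the combination $\overline{\alpha}_{n-1}-\overline{\gamma}_{n-1}\gamma_{n-1}^{-1}$ and the shift $x-\beta_{n-1}$ enter). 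Collecting the coefficients of $U_{n}^{(a)}(x;q)$ and $U_{n-1}^{(a)}(x;q)$ then yields exactly ${\mathcal E}_{1,n}(x)$ and ${\mathcal F}_{1,n}(x)$ as written.

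For $\ell=1$ the argument is the same but simpler, since $\sigma_{\ell}(x)=1$: I apply ${\mathscr D}_{q}$ directly to \eqref{ConexF_I}, use the first form of the product rule \eqref{prodqD}, replace ${\mathscr D}_{q}U_{n}^{(a)}(x;q)=[n]_{q}^{(1)}U_{n-1}^{(a)}(x;q)=[n]_q U_{n-1}^{(a)}(x;q)$ via the forward shift operator \eqref{FSop} with $k=1$, and replace the shifted arguments $U_{n}^{(a)}(qx;q)$, $U_{n-1}^{(a)}(qx;q)$ using \eqref{ReR} (and \eqref{ConexF_II} again for the $U_{n-2}$ that appears, which produces the $[n-1]_q\gamma_{n-1}^{-1}$ terms). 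Matching coefficients gives ${\mathcal E}_{3,n}(x)$ and ${\mathcal F}_{3,n}(x)$. Once \eqref{CFD1} is established for a given $\ell$, identity \eqref{CFD2} is purely formal: $\sigma_{\ell}(x){\mathscr D}_{q^{\ell}}\mathbb{U}_{n-1}^{(a)}(x;q)$ is obtained from \eqref{CFD1} with $n$ replaced by $n-1$, whose right-hand side involves $U_{n-1}^{(a)}$ and $U_{n-2}^{(a)}$; I then eliminate $U_{n-2}^{(a)}(x;q)$ through the recurrence \eqref{ReR}, $U_{n-2}^{(a)}(x;q)=\gamma_{n-1}^{-1}\bigl((\beta_{n-1}-x)U_{n-1}^{(a)}(x;q)+U_{n}^{(a)}(x;q)\bigr)$, which immediately produces ${\mathcal E}_{2+2\delta_{\ell,1},n}(x)=-{\mathcal F}_{1+2\delta_{\ell,1},n-1}(x)/\gamma_{n-1}$ and ${\mathcal F}_{2+2\delta_{\ell,1},n}(x)={\mathcal E}_{1+2\delta_{\ell,1},n-1}(x)+{\mathcal E}_{2+2\delta_{\ell,1},n}(x)(\beta_{n-1}-x)$, exactly as stated.

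The main obstacle is bookkeeping rather than conceptual: one has to be careful about \emph{which} form of the $q$-product rule \eqref{prodqD} to use (the choice determines whether ${\mathcal C}_{1,n}$ or ${\mathcal D}_{1,n}$ gets its argument dilated to $q^{-1}x$ or contracted to $qx$), and about the fact that ${\mathscr D}_{q^{-1}}$ acting on a polynomial in $x$ naturally evaluates the cofactor at $q^{-1}x$, which is why ${\mathcal C}_{1,n}(q^{-1}x)$, ${\mathcal D}_{1,n}(q^{-1}x)$ appear in ${\mathcal E}_{1,n},{\mathcal F}_{1,n}$ while ${\mathcal C}_{1,n}(qx)$, ${\mathcal D}_{1,n}(qx)$ appear in ${\mathcal E}_{3,n},{\mathcal F}_{3,n}$. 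A secondary subtlety is that after applying \eqref{StruR} and \eqref{ReR} one gets a stray $U_{n+1}^{(a)}$ from the structure relation and a stray $U_{n-2}^{(a)}$ from the recurrence; these must be folded back to the $\{U_n,U_{n-1}\}$ basis using \eqref{ReR} once more, and only then does the coefficient of $U_{n+1}$ cancel — this cancellation is what forces the particular combinations $\overline{\gamma}_n-\overline{\alpha}_n\gamma_n$ and $\overline{\alpha}_n(x-\beta_n)+\overline{\beta}_n$ in ${\mathcal E}_{1,n},{\mathcal F}_{1,n}$. Verifying that cancellation (equivalently, that the right-hand side of \eqref{CFD1} really has degree $\le n$ in the $U$-basis, as it must since the left-hand side does) is the one genuine check; everything else is substitution and collection of like terms.
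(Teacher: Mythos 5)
Your overall strategy is the paper's: apply ${\mathscr D}_{q^{\ell }}$ to the connection formula \eqref{ConexF_I} via the $q$-product rule \eqref{prodqD}, multiply by $\sigma _{\ell }(x)$, substitute the structure relation \eqref{StruR} (for $\ell =-1$) or the forward shift \eqref{FSop} (for $\ell =1$), fold the stray $U_{n+1}^{(a)}$ and $U_{n-2}^{(a)}$ back into the $\{U_{n}^{(a)},U_{n-1}^{(a)}\}$ basis with \eqref{ReR}, and obtain \eqref{CFD2} from \eqref{CFD1} by the index shift $n\rightarrow n-1$ plus one more application of \eqref{ReR}. That is exactly how the proof in the paper proceeds.

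However, one step as you describe it would fail. Each of the two terms produced by \eqref{prodqD} dilates exactly one of the two factors, yet your expansion contains both ${\mathcal{C}}_{1,n}(q^{-1}x)$ (in the term carrying ${\mathscr D}_{q^{-1}}U_{n}^{(a)}$) and $U_{n}^{(a)}(q^{-1}x;q)$ (in the term carrying ${\mathscr D}_{q^{-1}}{\mathcal{C}}_{1,n}$), which corresponds to neither form of the rule. More seriously, you then propose to rewrite the argument-dilated polynomials $U_{n}^{(a)}(q^{\mp 1}x;q)$ and $U_{n-1}^{(a)}(q^{\mp 1}x;q)$ ``using \eqref{ReR}''; but \eqref{ReR} is a recurrence in the degree at a \emph{fixed} argument and cannot convert $U_{n}^{(a)}(q^{-1}x;q)$ into a combination of $U_{n}^{(a)}(x;q)$ and $U_{n-1}^{(a)}(x;q)$ (that polynomial generically expands over all of $U_{0}^{(a)},\dots ,U_{n}^{(a)}$). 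The consistent choice --- and the one forced by the stated coefficients, since ${\mathcal{E}}_{1,n}$ contains the bare summand $\sigma (x){\mathscr D}_{q^{-1}}{\mathcal{C}}_{1,n}(x)$ multiplying an \emph{undilated} $U_{n}^{(a)}(x;q)$ --- is the form of \eqref{prodqD} in which only ${\mathcal{C}}_{1,n}$ and ${\mathcal{D}}_{1,n}$ acquire the argument $q^{\ell }x$ while the $U$'s keep argument $x$; then no dilated $U$'s ever appear, and \eqref{ConexF_II} is not needed for \eqref{CFD1}. Finally, a sign slip: inverting \eqref{ReR} gives $U_{n-2}^{(a)}(x;q)=\gamma _{n-1}^{-1}\bigl((x-\beta _{n-1})U_{n-1}^{(a)}(x;q)-U_{n}^{(a)}(x;q)\bigr)$, the negative of what you wrote; with your version the coefficient of $U_{n}^{(a)}$ in \eqref{CFD2} would come out as $+{\mathcal{F}}_{1+2\delta _{\ell ,1},n-1}(x)/\gamma _{n-1}$, contradicting the stated ${\mathcal{E}}_{2+2\delta _{\ell ,1},n}(x)$.
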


\begin{proof}
It is a direct consequence of the connection formulas (\ref{ConexF_I})--(\ref%
{ConexF_IV}), the three-term recurrence relation (\ref{ReR}) satisfied by $%
\{U_{n}^{(a)}(x;q)\}_{n\geq 0}$, and the structure relation (\ref{StruR}).
To be more precise, applying the $q$-derivative operator ${\mathscr D}%
_{q^{\ell }}$ to (\ref{ConexF_I}) for $\ell =-1,1$, together with the
property (\ref{prodqD}) yields 
\begin{eqnarray*}
{\mathscr D}_{q^{\ell }}\mathbb{U}_{n}^{(a)}(x;q) &=&{\mathcal{C}}%
_{1,n}(q^{\ell }x)\cdot {\mathscr D}_{q^{\ell
}}U_{n}^{(a)}(x;q)+U_{n}^{(a)}(x;q)\cdot {\mathscr D}_{q^{\ell }}{\mathcal{C}%
}_{1,n}(x) \\
&&+{\mathcal{D}}_{1,n}(q^{\ell }x)\cdot {\mathscr D}_{q^{\ell
}}U_{n-1}^{(a)}(x;q)+U_{n-1}^{(a)}(x;q)\cdot {\mathscr D}_{q^{\ell }}{%
\mathcal{D}}_{1,n}(x).
\end{eqnarray*}%
Thus, multiplying the above expression by $\sigma _{\ell }(x)$ for $\ell
=-1,1 $, and next combining (\ref{StruR}) with (\ref{FSop}) and (\ref{ReR}),
we deduce (\ref{CFD1}). Finally, shifting the index in (\ref{CFD1}) as $%
n\rightarrow n-1$ and using the recurrence relation (\ref{ReR}) we get (\ref%
{CFD2}). This completes the proof.
\end{proof}



As a direct consequence of the previous result, we next obtain the following
structure relations for the Al-Salam--Carlitz I-Sobolev type polynomials of
higher order.



\begin{theorem}
\label{sr_theorem} The Al-Salam--Carlitz I-Sobolev type polynomials of high
order $\{\mathbb{U}_{n}^{(a)}(x;q)\}_{n\geq 0}$ satisfy the following
structure relations for $\ell =-1,1$,%
\begin{equation}
\Theta _{\ell ,n}(x)\cdot {\mathscr D}_{q^{\ell }} \mathbb{U}%
_{n}^{(a)}(x;q)=\Xi _{2,1+2\delta _{\ell ,1},n}(x)\cdot \mathbb{U}%
_{n}^{(a)}(x;q)+\Xi _{1,1+2\delta _{\ell ,1},n}(x)\cdot \mathbb{U}%
_{n-1}^{(a)}(x;q),  \label{DES1}
\end{equation}%
and 
\begin{equation}
\Theta _{\ell ,n}(x)\cdot {\mathscr D}_{q^{\ell }}\mathbb{U}%
_{n-1}^{(a)}(x;q)=\Xi _{2,2+2\delta _{\ell ,1},n}(x)\cdot \mathbb{U}%
_{n}^{(a)}(x;q)+\Xi _{1,2+2\delta _{\ell ,1},n}(x)\cdot \mathbb{U}%
_{n-1}^{(a)}(x;q),  \label{DES2}
\end{equation}%
where $\Theta _{\ell ,n}(x)=\sigma _{\ell }(x)\det (B_{n}(x))$ and 
\begin{equation*}
\Xi _{i,k,n}(x)=(-1)^{i}\det 
\begin{pmatrix}
{\mathcal{E}}_{k,n}(x) & {\mathcal{C}}_{i,n}(x) \\ 
{\mathcal{F}}_{k,n}(x) & {\mathcal{D}}_{i,n}(x)%
\end{pmatrix}%
,\quad i=1,2,\quad k=1,2,3,4.
\end{equation*}
\end{theorem}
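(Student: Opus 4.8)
The plan is to deduce the two structure relations purely by linear algebra, feeding Lemma~\ref{LemmaLadder} into the connection formulas (\ref{ConexF_III})--(\ref{ConexF_IV}); no new analytic input is needed. First I would fix $\ell\in\{-1,1\}$ and, to lighten notation, set $k=1+2\delta_{\ell,1}$, so that $k\in\{1,3\}$. Then (\ref{CFD1}) of Lemma~\ref{LemmaLadder} reads
\begin{equation*}
\sigma_\ell(x)\,{\mathscr D}_{q^\ell}\mathbb{U}_n^{(a)}(x;q)={\mathcal E}_{k,n}(x)\,U_n^{(a)}(x;q)+{\mathcal F}_{k,n}(x)\,U_{n-1}^{(a)}(x;q).
\end{equation*}
The key move is to eliminate $U_n^{(a)}(x;q)$ and $U_{n-1}^{(a)}(x;q)$ from the right-hand side in favour of $\mathbb{U}_n^{(a)}(x;q)$ and $\mathbb{U}_{n-1}^{(a)}(x;q)$, which is exactly what (\ref{ConexF_III}) and (\ref{ConexF_IV}) provide: expanding the $2\times2$ determinants there gives $U_n^{(a)}=\det(B_n(x))^{-1}(\mathbb{U}_n^{(a)}{\mathcal D}_{2,n}-\mathbb{U}_{n-1}^{(a)}{\mathcal D}_{1,n})$ and $U_{n-1}^{(a)}=\det(B_n(x))^{-1}(\mathbb{U}_{n-1}^{(a)}{\mathcal C}_{1,n}-\mathbb{U}_n^{(a)}{\mathcal C}_{2,n})$.

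Substituting these two expressions into the displayed identity and multiplying through by $\det(B_n(x))$ clears the denominator and converts the left-hand side into $\sigma_\ell(x)\det(B_n(x))\,{\mathscr D}_{q^\ell}\mathbb{U}_n^{(a)}(x;q)=\Theta_{\ell,n}(x)\,{\mathscr D}_{q^\ell}\mathbb{U}_n^{(a)}(x;q)$. Collecting the coefficient of $\mathbb{U}_n^{(a)}(x;q)$ yields ${\mathcal E}_{k,n}{\mathcal D}_{2,n}-{\mathcal F}_{k,n}{\mathcal C}_{2,n}=\Xi_{2,k,n}(x)$, and collecting the coefficient of $\mathbb{U}_{n-1}^{(a)}(x;q)$ yields ${\mathcal F}_{k,n}{\mathcal C}_{1,n}-{\mathcal E}_{k,n}{\mathcal D}_{1,n}=\Xi_{1,k,n}(x)$, where the sign $(-1)^i$ built into the definition of $\Xi_{i,k,n}$ makes both identifications exact; this is precisely (\ref{DES1}). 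Running the identical argument starting from (\ref{CFD2}) instead of (\ref{CFD1}), with $k$ replaced by $2+2\delta_{\ell,1}\in\{2,4\}$, produces (\ref{DES2}).

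I do not anticipate a genuine obstacle here; the only point demanding care is the index bookkeeping — checking that the two shifts $k=1+2\delta_{\ell,1}$ and $k=2+2\delta_{\ell,1}$ correctly select among the four coefficient pairs $\{{\mathcal E}_{k,n},{\mathcal F}_{k,n}\}_{k=1}^{4}$ supplied by Lemma~\ref{LemmaLadder} as $\ell$ ranges over $\{-1,1\}$, and that $\sigma_\ell(x)$ (which equals $\sigma(x)$ only in the $q$-dilation case $\ell=-1$, and $1$ otherwise) is carried along unchanged throughout. One should also record in passing that $\det(B_n(x))\not\equiv 0$, since $\{U_n^{(a)}(x;q),U_{n-1}^{(a)}(x;q)\}$ are linearly independent; this is what legitimizes the passage from (\ref{ConexF_I})--(\ref{ConexF_II}) to (\ref{ConexF_III})--(\ref{ConexF_IV}), and hence the whole computation, the stated relations being understood as polynomial identities obtained after multiplying out by $\det(B_n(x))$.
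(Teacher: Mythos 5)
Your proposal is correct and follows essentially the same route as the paper, which derives the theorem directly from (\ref{ConexF_III})--(\ref{ConexF_IV}) combined with Lemma \ref{LemmaLadder}; you have merely written out the substitution and the identification of the coefficients with $\Xi_{2,k,n}$ and $\Xi_{1,k,n}$ (signs included) that the paper leaves implicit. The remark that the relations should be read as polynomial identities after clearing $\det(B_n(x))$ is a sensible precaution but does not change the argument.
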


\begin{proof}
The result follows in a straightforward way from (\ref{ConexF_III})--(\ref%
{ConexF_IV}), together with the application of the previous Lemma \ref%
{LemmaLadder}.
\end{proof}



In the next result, we provide ladder operators associated to the
Al-Salam--Carlitz I-Sobolev type polynomials. Its proof is quite involved,
leaning on the use of Theorem \ref{sr_theorem}, and following the same
technique as in \cite{GGH-M20}.



\begin{proposition}
\label{S4-PropLadder} Let $\{\mathbb{U}_{n}^{(a)}(x;q)\}_{n\geq 0}$ be the
sequence of Al-Salam--Carlitz I-Sobolev type polynomials defined by (\ref%
{ASPTSHR}), and let $I$ be the identity operator. Then, the ladder
(destruction and creation) operators ${\mathfrak{a}_{\ell }}$ and ${%
\mathfrak{a}_{\ell }^{\dagger }}$, respectively, are defined by%
\begin{equation}
{\mathfrak{a}_{\ell }}=\Theta _{\ell ,n}(x){\mathscr D}_{q^{\ell }}-\Xi
_{2,1+2\delta _{\ell ,1},n}(x)I,  \label{eqDO}
\end{equation}%
\begin{equation}
{\mathfrak{a}_{\ell }^{\dagger }}=\Theta _{\ell ,n}(x){\mathscr D}_{q^{\ell
}}-\Xi _{1,2+2\delta _{\ell ,1},n}(x)I,  \label{eqCO}
\end{equation}%
which verify 
\begin{equation}
{\mathfrak{a}_{\ell }}\left( \mathbb{U}_{n}^{(a)}(x;q)\right) =\Xi
_{1,1+2\delta _{\ell ,1},n}(x)\mathbb{U}_{n-1}^{(a)}(x;q),  \label{DO}
\end{equation}%
\begin{equation}
{\mathfrak{a}_{\ell }^{\dagger }}\left( \mathbb{U}_{n-1}^{(a)}(x;q)\right)
=\Xi _{2,2+2\delta _{\ell ,1},n}(x)\mathbb{U}_{n}^{(a)}(x;q),  \label{CO}
\end{equation}%
where $\ell =-1,1$.
\end{proposition}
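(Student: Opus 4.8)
The plan is to read off the ladder operator identities directly from the two structure relations established in Theorem~\ref{sr_theorem}. The key observation is that, once Theorem~\ref{sr_theorem} is in hand, equations~\eqref{DO} and~\eqref{CO} are nothing more than those structure relations rewritten after moving one term to the left-hand side and recognizing the resulting left-hand side as the action of the operators defined in~\eqref{eqDO} and~\eqref{eqCO}. So the proof is essentially a bookkeeping argument: there is no new analytic content beyond what Theorem~\ref{sr_theorem} already provides.

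First I would fix $\ell\in\{-1,1\}$ and recall from~\eqref{DES1} that
\begin{equation*}
\Theta _{\ell ,n}(x)\,{\mathscr D}_{q^{\ell }}\mathbb{U}_{n}^{(a)}(x;q)=\Xi _{2,1+2\delta _{\ell ,1},n}(x)\,\mathbb{U}_{n}^{(a)}(x;q)+\Xi _{1,1+2\delta _{\ell ,1},n}(x)\,\mathbb{U}_{n-1}^{(a)}(x;q).
\end{equation*}
Subtracting $\Xi _{2,1+2\delta _{\ell ,1},n}(x)\,\mathbb{U}_{n}^{(a)}(x;q)$ from both sides and using the definition~\eqref{eqDO} of ${\mathfrak{a}_{\ell }}$ as the operator $\Theta _{\ell ,n}(x){\mathscr D}_{q^{\ell }}-\Xi _{2,1+2\delta _{\ell ,1},n}(x)I$ applied to a polynomial, the left-hand side becomes exactly ${\mathfrak{a}_{\ell }}\big(\mathbb{U}_{n}^{(a)}(x;q)\big)$, and the right-hand side is $\Xi _{1,1+2\delta _{\ell ,1},n}(x)\,\mathbb{U}_{n-1}^{(a)}(x;q)$. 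This is precisely~\eqref{DO}. In the same way, starting from~\eqref{DES2},
\begin{equation*}
\Theta _{\ell ,n}(x)\,{\mathscr D}_{q^{\ell }}\mathbb{U}_{n-1}^{(a)}(x;q)=\Xi _{2,2+2\delta _{\ell ,1},n}(x)\,\mathbb{U}_{n}^{(a)}(x;q)+\Xi _{1,2+2\delta _{\ell ,1},n}(x)\,\mathbb{U}_{n-1}^{(a)}(x;q),
\end{equation*}
and subtracting $\Xi _{1,2+2\delta _{\ell ,1},n}(x)\,\mathbb{U}_{n-1}^{(a)}(x;q)$ from both sides, the left-hand side is the action of ${\mathfrak{a}_{\ell }^{\dagger }}=\Theta _{\ell ,n}(x){\mathscr D}_{q^{\ell }}-\Xi _{1,2+2\delta _{\ell ,1},n}(x)I$ on $\mathbb{U}_{n-1}^{(a)}(x;q)$, which gives~\eqref{CO}.

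The only genuine point requiring care, and what I would flag as the main obstacle, is purely notational: one must check that the indices $1+2\delta_{\ell,1}$ and $2+2\delta_{\ell,1}$ appearing in Theorem~\ref{sr_theorem}, in the definitions~\eqref{eqDO}--\eqref{eqCO}, and in the claimed identities~\eqref{DO}--\eqref{CO} are consistently matched across both values $\ell=-1$ (where $2\delta_{\ell,1}=0$, so the coefficients are $\Xi_{\cdot,1,n}$ and $\Xi_{\cdot,2,n}$, tied to the $q^{-1}$-derivative case with $\sigma_\ell(x)=\sigma(x)$) and $\ell=1$ (where $2\delta_{\ell,1}=2$, so the coefficients are $\Xi_{\cdot,3,n}$ and $\Xi_{\cdot,4,n}$, tied to the $q$-derivative case with $\sigma_\ell(x)=1$). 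Once this index correspondence is verified, the statement follows with no further computation. I would therefore present the argument as: ``The identities~\eqref{DO} and~\eqref{CO} follow at once by rearranging the structure relations~\eqref{DES1} and~\eqref{DES2} of Theorem~\ref{sr_theorem} and invoking the definitions~\eqref{eqDO} and~\eqref{eqCO} of ${\mathfrak{a}_{\ell }}$ and ${\mathfrak{a}_{\ell }^{\dagger }}$,'' with a sentence or two spelling out the rearrangement for each of the two operators.
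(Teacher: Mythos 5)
Your proposal is correct and coincides with the paper's intended argument: the paper omits an explicit proof but states that the result leans on Theorem~\ref{sr_theorem}, and indeed (\ref{DO}) and (\ref{CO}) are exactly the relations (\ref{DES1}) and (\ref{DES2}) rearranged so that the left-hand sides become the operators (\ref{eqDO}) and (\ref{eqCO}) acting on $\mathbb{U}_{n}^{(a)}(x;q)$ and $\mathbb{U}_{n-1}^{(a)}(x;q)$, respectively. Your index bookkeeping for $\ell=-1$ versus $\ell=1$ is also consistent with the conventions of Theorem~\ref{sr_theorem}, so nothing further is needed.
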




We complete this Section with a straightforward application of the above
ladder operators ${\mathfrak{a}_{\ell }}$ and ${\mathfrak{a}_{\ell
}^{\dagger }}$. We use these operators to obtain two versions (one for $\ell
=-1$ and other for $\ell =1$) of certain three term recurrence formula with
rational coefficients which provides $\mathbb{U}_{n+1}^{(a)}(x;q)$ in terms
of the former two consecutive polynomials $\mathbb{U}_{n}^{(a)}(x;q)$ and $%
\mathbb{U}_{n-1}^{(a)}(x;q)$. We recall the importance of such recurrence
formula, which allows to give further properties of the family of orthogonal
polynomials, as described in classical references such as~\cite{Chi-78}. The
proof of the next result can be followed from the aforementioned technique,
which has been recently generalized in \cite[p. 8]{GGH-M20}. However, we
have decided to include it below for the sake of completeness.

\begin{theorem}
\label{S4-Theor3TRR-RC}The Al-Salam--Carlitz I-Sobolev type polynomials of
high order $\{\mathbb{U}_{n}^{(a)}(x;q)\}_{n\geq 0}$ satisfy the following
three-term recurrence relations for $\ell =-1,1$, 
\begin{equation}
\alpha _{\ell ,n}(x)\mathbb{U}_{n+1}^{(a)}(x;q)=\beta _{\ell ,n}(x)\mathbb{U}%
_{n}^{(a)}(x;q)+\gamma _{\ell ,n}(x)\mathbb{U}_{n-1}^{(a)}(x;q),
\label{3TRR-RC}
\end{equation}%
where 
\begin{equation*}
\alpha _{\ell ,n}(x)=\Theta _{\ell ,n}(x)\Xi _{2,2+2\delta _{\ell
,1},n+1}(x),
\end{equation*}%
\begin{equation*}
\beta _{\ell ,n}(x)=\Theta _{\ell ,n+1}(x)\Xi _{2,1+2\delta _{\ell
,1},n}(x)-\Theta _{\ell ,n}(x)\Xi _{1,2+2\delta _{\ell ,1},n+1}(x),
\end{equation*}%
and 
\begin{equation*}
\gamma _{\ell ,n}(x)=\Theta _{\ell ,n+1}(x)\Xi _{1,1+2\delta _{\ell
,1},n}(x).
\end{equation*}
\end{theorem}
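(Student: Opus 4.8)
The plan is to derive the three-term recurrence relation by eliminating the Al-Salam--Carlitz I polynomials $U_n^{(a)}$ and $U_{n-1}^{(a)}$ between suitable instances of the creation and destruction operators of Proposition~\ref{S4-PropLadder}, exactly in the spirit of the technique from \cite[p.~8]{GGH-M20}. Concretely, fix $\ell\in\{-1,1\}$. The destruction relation (\ref{DO}) at level $n+1$ reads ${\mathfrak a}_\ell(\mathbb{U}_{n+1}^{(a)}(x;q))=\Xi_{1,1+2\delta_{\ell,1},n+1}(x)\,\mathbb{U}_{n}^{(a)}(x;q)$, which after expanding ${\mathfrak a}_\ell$ via (\ref{eqDO}) becomes
\begin{equation*}
\Theta_{\ell,n+1}(x)\,{\mathscr D}_{q^\ell}\mathbb{U}_{n+1}^{(a)}(x;q)=\Xi_{2,1+2\delta_{\ell,1},n+1}(x)\,\mathbb{U}_{n+1}^{(a)}(x;q)+\Xi_{1,1+2\delta_{\ell,1},n+1}(x)\,\mathbb{U}_{n}^{(a)}(x;q).
\end{equation*}
On the other hand, the creation relation (\ref{CO}) at the same level $n+1$ gives ${\mathfrak a}_\ell^\dagger(\mathbb{U}_{n}^{(a)}(x;q))=\Xi_{2,2+2\delta_{\ell,1},n+1}(x)\,\mathbb{U}_{n+1}^{(a)}(x;q)$, i.e.
\begin{equation*}
\Theta_{\ell,n+1}(x)\,{\mathscr D}_{q^\ell}\mathbb{U}_{n}^{(a)}(x;q)=\Xi_{1,2+2\delta_{\ell,1},n+1}(x)\,\mathbb{U}_{n}^{(a)}(x;q)+\Xi_{2,2+2\delta_{\ell,1},n+1}(x)\,\mathbb{U}_{n+1}^{(a)}(x;q).
\end{equation*}

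The first step is then to eliminate the $q$-derivative term ${\mathscr D}_{q^\ell}\mathbb{U}_{n+1}^{(a)}$ (or, equivalently in a second version, to work with the structure relations of Theorem~\ref{sr_theorem} directly). Multiplying the expanded destruction relation at level $n+1$ by $\Theta_{\ell,n}(x)$ and invoking the structure relation (\ref{DES1}) shifted to index $n+1$ — which expresses $\Theta_{\ell,n+1}(x)\,{\mathscr D}_{q^\ell}\mathbb{U}_{n+1}^{(a)}(x;q)$ as a combination $\Xi_{2,1+2\delta_{\ell,1},n+1}(x)\,\mathbb{U}_{n+1}^{(a)}(x;q)+\Xi_{1,1+2\delta_{\ell,1},n+1}(x)\,\mathbb{U}_{n}^{(a)}(x;q)$ — we would instead use the creation operator ${\mathfrak a}_\ell^\dagger$ applied at level $n+1$ to introduce $\mathbb{U}_{n-1}^{(a)}$. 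More precisely, the cleanest route is: take the creation identity (\ref{CO}) at level $n$, namely $\Theta_{\ell,n}(x)\,{\mathscr D}_{q^\ell}\mathbb{U}_{n-1}^{(a)}(x;q)=\Xi_{1,2+2\delta_{\ell,1},n}(x)\,\mathbb{U}_{n-1}^{(a)}(x;q)+\Xi_{2,2+2\delta_{\ell,1},n}(x)\,\mathbb{U}_{n}^{(a)}(x;q)$, and combine it with the destruction identity (\ref{DO}) at level $n$, $\Theta_{\ell,n}(x)\,{\mathscr D}_{q^\ell}\mathbb{U}_{n}^{(a)}(x;q)=\Xi_{2,1+2\delta_{\ell,1},n}(x)\,\mathbb{U}_{n}^{(a)}(x;q)+\Xi_{1,1+2\delta_{\ell,1},n}(x)\,\mathbb{U}_{n-1}^{(a)}(x;q)$. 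Applying the creation operator at level $n+1$ to $\mathbb{U}_n^{(a)}$ produces $\mathbb{U}_{n+1}^{(a)}$ with coefficient $\Theta_{\ell,n+1}(x)\Xi_{2,2+2\delta_{\ell,1},n+1}(x)$, which is precisely $\alpha_{\ell,n}(x)$ after using that $\Theta_{\ell,n+1}$ and the $\Xi$'s are the stated determinants; the term $\Theta_{\ell,n}(x)\Xi_{1,2+2\delta_{\ell,1},n+1}(x)$ appears with the opposite sign and combines with $\Theta_{\ell,n+1}(x)\Xi_{2,1+2\delta_{\ell,1},n}(x)$ to give $\beta_{\ell,n}(x)$, while the residual $\mathbb{U}_{n-1}^{(a)}$ contribution is $\Theta_{\ell,n+1}(x)\Xi_{1,1+2\delta_{\ell,1},n}(x)=\gamma_{\ell,n}(x)$.

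The second step is the bookkeeping: carefully matching the indices $1+2\delta_{\ell,1}$ and $2+2\delta_{\ell,1}$ (which take values in $\{1,2,3,4\}$ and encode the $\ell=-1$ versus $\ell=1$ dichotomy) so that the coefficients that emerge from the elimination literally coincide with the $\alpha_{\ell,n}$, $\beta_{\ell,n}$, $\gamma_{\ell,n}$ written in the statement. Since $\Theta_{\ell,n}(x)=\sigma_\ell(x)\det(B_n(x))$ and each $\Xi_{i,k,n}(x)$ is the signed $2\times2$ determinant built from ${\mathcal E}_{k,n},{\mathcal F}_{k,n},{\mathcal C}_{i,n},{\mathcal D}_{i,n}$, no further simplification is needed beyond recognizing these abbreviations; the recurrence is then exactly (\ref{3TRR-RC}).

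The main obstacle I expect is purely organizational rather than conceptual: one must be scrupulous about which level ($n$ or $n+1$) each ladder identity is applied at, and about the sign conventions hidden in $\Xi_{i,k,n}(x)=(-1)^i\det(\cdots)$, so that the cross-terms cancel to leave a clean three-term relation with no surviving ${\mathscr D}_{q^\ell}$ and no surviving $U_k^{(a)}$. A secondary subtlety is verifying that $\alpha_{\ell,n}(x)$ is not identically zero, so that the recurrence genuinely determines $\mathbb{U}_{n+1}^{(a)}$; this follows because $\det(B_n(x))$ and the relevant $\Xi$ are nonzero polynomials (their leading behavior is controlled by the monic normalization of the $\mathbb{U}_n^{(a)}$ together with the connection formula (\ref{ConexF_I})). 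Once these points are handled, the proof reduces to substituting (\ref{DO})--(\ref{CO}) and reading off the coefficients.
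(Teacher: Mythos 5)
Your argument is essentially the paper's own proof: the paper takes (\ref{DES1}) at level $n$ and (\ref{DES2}) shifted to $n+1$ --- the two identities sharing the term ${\mathscr D}_{q^{\ell}}\mathbb{U}_{n}^{(a)}(x;q)$ --- multiplies them by $\Theta_{\ell,n+1}(x)$ and $-\Theta_{\ell,n}(x)$ respectively and adds, which is exactly your elimination of the $q$-derivative between the destruction identity at level $n$ and the creation identity at level $n+1$ (your detour through (\ref{CO}) at level $n$ is superfluous, since that identity carries ${\mathscr D}_{q^{\ell}}\mathbb{U}_{n-1}^{(a)}$ and cannot cancel against ${\mathscr D}_{q^{\ell}}\mathbb{U}_{n}^{(a)}$). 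One slip to correct: the coefficient of $\mathbb{U}_{n+1}^{(a)}(x;q)$ produced by this cross-multiplication is $\Theta_{\ell,n}(x)\,\Xi_{2,2+2\delta_{\ell,1},n+1}(x)$, not $\Theta_{\ell,n+1}(x)\,\Xi_{2,2+2\delta_{\ell,1},n+1}(x)$ as you wrote --- which is consistent with the factor $\Theta_{\ell,n}(x)\,\Xi_{1,2+2\delta_{\ell,1},n+1}(x)$ you correctly keep in $\beta_{\ell,n}(x)$ and with the stated $\alpha_{\ell,n}(x)$.
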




\begin{proof}
Shifting the index in (\ref{DES2}) as $n\rightarrow n+1$, yields 
\begin{equation*}
\Theta _{\ell ,n+1}(x){\mathscr D}_{q^{\ell }}\mathbb{U}_{n}^{(a)}(x;q)=\Xi
_{2,2+2\delta _{\ell ,1},n+1}(x)\mathbb{U}_{n+1}^{(a)}(x;q)+\Xi
_{1,2+2\delta _{\ell ,1},n+1}(x)\mathbb{U}_{n}^{(a)}(x;q).
\end{equation*}%
Next, multiplying the above expression by $-\Theta _{\ell ,n}(x)$, and
multiplying (\ref{DES1}) by $\Theta _{\ell ,n+1}(x)$, adding and simplifying
the resulting equations, we obtain (\ref{3TRR-RC}). This completes the proof.
\end{proof}

\begin{remark}
Notice that (\ref{3TRR-RC}) becomes (\ref{ReR}) when $\lambda=\mu=0$.
\end{remark}



\section{Holonomic second order $q$-difference equations}

\label{S5-LdE}



In this section, we find up to four different $q$-difference equations of
second order that $\mathbb{U}_{n}^{(a)}(x;q)$ satisfies. It is worth noting
that these $q$-difference equations are no longer classical, in the sense
that all their polynomial coefficients depend on $n$, so we have different
coefficients for every different degree $n$ that we consider. When dealing
with differential equations, these kind of equations are known in the
literature as \textit{Holonomic second order differential equations} (see,
for example \cite{GGH-M20}), so by natural extension we refer to them as 
\textit{Second order linear }$\mathit{q}$\textit{-difference equations}.

It is also worth remarking the different nature of these four equations. The
four of them are linear second order difference equations. However, in two
of them (see Proposition~\ref{S5-Proposition51}), a unique difference
operator appears. We also point out the appearance of four regular singular
points in one of these two equations, more precisely for the choice $\ell=-1$%
. Such points are $1,a,q$ and $aq$, which appear to be intimately related to
the problem. The two second difference equations (see Proposition~\ref%
{S5-Proposition52}) an analogous disquisition can be made concerning the
singular points, which are now the points $1,a,q^{-1}$ and $aq^{-1}$.
Moreover, the two last equations involve the two $q-$difference operators
previously mentioned, in contrast to the two first ones.


\begin{proposition}[2nd order holonomic eq. I]
\label{S5-Proposition51} Let $\{\mathbb{U}_{n}^{(a)}(x;q)\}_{n\geq 0}$ be
the sequence of Al-Salam--Carlitz I-Sobolev type polynomials defined by (\ref%
{ASPTSHR}). Then, the following statement holds, for $\ell =-1,1$, 
\begin{equation}
{\mathcal{R}}_{\ell ,n}(x){\mathscr D}_{q^{\ell }}^{2}\mathbb{U}%
_{n}^{(a)}(x;q)+{\mathcal{S}}_{\ell ,n}(x){\mathscr D}_{q^{\ell }}\mathbb{U}%
_{n}^{(a)}(x;q)+{\mathcal{T}}_{\ell ,n}(x)\mathbb{U}_{n}^{(a)}(x;q)=0,\quad
n\geq 0,  \label{SDC1}
\end{equation}%
where 
\begin{equation*}
{\mathcal{R}}_{\ell ,n}(x)=\Theta _{\ell ,n}(x)\Theta _{\ell ,n}(q^{\ell }x),
\end{equation*}%
\begin{eqnarray*}
{\mathcal{S}}_{\ell ,n}(x) &=&\Theta _{\ell ,n}(x)\left[ {\mathscr D}%
_{q^{\ell }}\Theta _{\ell ,n}(x)-\Xi _{2,1+2\delta _{\ell ,1},n}(q^{\ell
}x)-\Xi _{1,2+2\delta _{\ell ,1},n}(x)\right] \\
&&\quad -\frac{\Theta _{\ell ,n}(x)\left[ \Theta _{\ell ,n}(x)+(q^{\ell
}-1)x\,\Xi _{1,2+2\delta _{\ell ,1},n}(x)\right] {\mathscr D}_{q^{\ell }}\Xi
_{1,1+2\delta _{\ell ,1},n}(x)}{\Xi _{1,1+2\delta _{\ell ,1},n}(x)},
\end{eqnarray*}%
and 
\begin{eqnarray*}
{\mathcal{T}}_{\ell ,n}(x) &=&\Xi _{1,2+2\delta _{\ell ,1},n}(x)\Xi
_{2,1+2\delta _{\ell ,1},n}(x)-\Theta _{\ell ,n}(x){\mathscr D}_{q^{\ell
}}\Xi _{2,1+2\delta _{\ell ,1},n}(x) \\
&&\quad +\frac{\Xi _{2,1+2\delta _{\ell ,1},n}(x)\left[ \Theta _{\ell
,n}(x)+(q^{\ell }-1)x\,\Xi _{1,2+2\delta _{\ell ,1},n}(x)\right] {\mathscr D}%
_{q^{\ell }}\Xi _{1,1+2\delta _{\ell ,1},n}(x)}{\Xi _{1,1+2\delta _{\ell
,1},n}(x)} \\
&&\quad \quad -\Xi _{1,1+2\delta _{\ell ,1},n}(q^{\ell }x)\Xi _{2,2+2\delta
_{\ell ,1},n}(x).
\end{eqnarray*}
\end{proposition}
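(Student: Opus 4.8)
The plan is to derive the second order $q$-difference equation directly from the first-order ladder structure established in Theorem~\ref{sr_theorem}, by eliminating $\mathbb{U}_{n-1}^{(a)}(x;q)$ between the two structure relations \eqref{DES1}--\eqref{DES2}. First I would take the destruction relation \eqref{DES1}, which reads $\Theta_{\ell,n}(x)\,{\mathscr D}_{q^{\ell}}\mathbb{U}_{n}^{(a)}(x;q)=\Xi_{2,1+2\delta_{\ell,1},n}(x)\,\mathbb{U}_{n}^{(a)}(x;q)+\Xi_{1,1+2\delta_{\ell,1},n}(x)\,\mathbb{U}_{n-1}^{(a)}(x;q)$, and solve it for $\mathbb{U}_{n-1}^{(a)}(x;q)$, namely
\begin{equation*}
\mathbb{U}_{n-1}^{(a)}(x;q)=\frac{\Theta_{\ell,n}(x)\,{\mathscr D}_{q^{\ell}}\mathbb{U}_{n}^{(a)}(x;q)-\Xi_{2,1+2\delta_{\ell,1},n}(x)\,\mathbb{U}_{n}^{(a)}(x;q)}{\Xi_{1,1+2\delta_{\ell,1},n}(x)}.
\end{equation*}
This is exactly the content of the destruction operator $\mathfrak{a}_{\ell}$ from Proposition~\ref{S4-PropLadder}: $\mathfrak{a}_{\ell}(\mathbb{U}_{n}^{(a)}(x;q))=\Xi_{1,1+2\delta_{\ell,1},n}(x)\,\mathbb{U}_{n-1}^{(a)}(x;q)$.

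Next I would apply the operator ${\mathscr D}_{q^{\ell}}$ to the relation \eqref{DES1} once more, using the $q$-Leibniz product rule \eqref{prodqD} in the form ${\mathscr D}_{q^{\ell}}[f(x)g(x)]=f(q^{\ell}x){\mathscr D}_{q^{\ell}}g(x)+g(x){\mathscr D}_{q^{\ell}}f(x)$, which produces a term in ${\mathscr D}_{q^{\ell}}^{2}\mathbb{U}_{n}^{(a)}(x;q)$, terms in ${\mathscr D}_{q^{\ell}}\mathbb{U}_{n}^{(a)}(x;q)$ and $\mathbb{U}_{n}^{(a)}(x;q)$, and crucially a term involving ${\mathscr D}_{q^{\ell}}\mathbb{U}_{n-1}^{(a)}(x;q)$. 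At this point I invoke the creation relation \eqref{DES2}, $\Theta_{\ell,n}(x)\,{\mathscr D}_{q^{\ell}}\mathbb{U}_{n-1}^{(a)}(x;q)=\Xi_{2,2+2\delta_{\ell,1},n}(x)\,\mathbb{U}_{n}^{(a)}(x;q)+\Xi_{1,2+2\delta_{\ell,1},n}(x)\,\mathbb{U}_{n-1}^{(a)}(x;q)$, to rewrite ${\mathscr D}_{q^{\ell}}\mathbb{U}_{n-1}^{(a)}(x;q)$ in terms of $\mathbb{U}_{n}^{(a)}(x;q)$ and $\mathbb{U}_{n-1}^{(a)}(x;q)$; note one will need the argument-shifted coefficient, since the $q$-product rule evaluates $\Xi_{1,1+2\delta_{\ell,1},n}$ at $q^{\ell}x$, which accounts for the appearance of $\Xi_{1,1+2\delta_{\ell,1},n}(q^{\ell}x)$ and $\Theta_{\ell,n}(q^{\ell}x)$ in the stated coefficients ${\mathcal{R}}_{\ell,n}$ and ${\mathcal{T}}_{\ell,n}$. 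After these substitutions the only remaining occurrence of $\mathbb{U}_{n-1}^{(a)}(x;q)$ is eliminated using the boxed expression above, leaving a relation purely among ${\mathscr D}_{q^{\ell}}^{2}\mathbb{U}_{n}^{(a)}(x;q)$, ${\mathscr D}_{q^{\ell}}\mathbb{U}_{n}^{(a)}(x;q)$ and $\mathbb{U}_{n}^{(a)}(x;q)$.

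Finally, I would clear denominators by multiplying through by $\Theta_{\ell,n}(x)$ and collect the coefficients, matching them against ${\mathcal{R}}_{\ell,n}(x)$, ${\mathcal{S}}_{\ell,n}(x)$ and ${\mathcal{T}}_{\ell,n}(x)$; the division by $\Xi_{1,1+2\delta_{\ell,1},n}(x)$ that survives in ${\mathcal{S}}_{\ell,n}$ and ${\mathcal{T}}_{\ell,n}$ is precisely the residue of having solved for $\mathbb{U}_{n-1}^{(a)}$ through that factor. The case $n=0$ is immediate since $\mathbb{U}_{0}^{(a)}(x;q)=1$ and both ${\mathscr D}_{q^{\ell}}^{2}$ and ${\mathscr D}_{q^{\ell}}$ annihilate it, while ${\mathcal{T}}_{\ell,0}(x)$ vanishes by the degeneracy of the structure relations at $n=0$. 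The main obstacle I anticipate is purely bookkeeping: correctly tracking the $q^{\ell}$-dilation of arguments produced by each application of \eqref{prodqD} — in particular ensuring that the shifted coefficients $\Theta_{\ell,n}(q^{\ell}x)$, $\Xi_{2,1+2\delta_{\ell,1},n}(q^{\ell}x)$ and $\Xi_{1,1+2\delta_{\ell,1},n}(q^{\ell}x)$ appear in exactly the right places — and verifying that all the non-shifted cancellations line up to give the compact form stated; there is no conceptual difficulty beyond the elimination itself.
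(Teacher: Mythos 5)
Your proposal is correct and is essentially the paper's own argument: the paper packages the very same computation as applying the creation operator to the annihilation relation, i.e.\ ${\mathfrak{a}_{\ell }^{\dagger }}\left[ {\mathfrak{a}_{\ell }}\left( \mathbb{U}_{n}^{(a)}(x;q)\right) \right] ={\mathfrak{a}_{\ell }^{\dagger }}\left[ \Xi _{1,1+2\delta _{\ell ,1},n}(x)\mathbb{U}_{n-1}^{(a)}(x;q)\right]$, then uses (\ref{prodqD}), (\ref{DES2}) and finally (\ref{DES1}) to eliminate $\mathbb{U}_{n-1}^{(a)}(x;q)$, exactly as you do. Your version differs from the paper's only by the addition of a multiple of (\ref{DES1}) before the final elimination (you apply $\Theta_{\ell,n}(x){\mathscr D}_{q^{\ell}}$ alone rather than the full $\mathfrak{a}_{\ell}^{\dagger}$), which leaves the resulting coefficients ${\mathcal{R}}_{\ell,n}$, ${\mathcal{S}}_{\ell,n}$, ${\mathcal{T}}_{\ell,n}$ unchanged once one uses $\Xi _{1,1+2\delta _{\ell ,1},n}(q^{\ell}x)=\Xi _{1,1+2\delta _{\ell ,1},n}(x)+(q^{\ell}-1)x\,{\mathscr D}_{q^{\ell}}\Xi _{1,1+2\delta _{\ell ,1},n}(x)$.
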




\begin{proof}
In fact, from (\ref{DO}) we have%
\begin{equation}
{\mathfrak{a}_{\ell }^{\dagger }}\left[ {\mathfrak{a}_{\ell }}\left( \mathbb{%
U}_{n}^{(a)}(x;q)\right) \right] ={\mathfrak{a}_{\ell }^{\dagger }}\left[
\Xi _{1,1+2\delta _{\ell ,1},n}(x)\mathbb{U}_{n-1}^{(a)}(x;q)\right] .
\label{eqproof1}
\end{equation}%
Then, applying (\ref{eqDO}) and (\ref{eqCO}) to left hand member of (\ref%
{eqproof1}) we have%
\begin{equation*}
{\mathfrak{a}_{\ell }^{\dagger }}\left[ {\mathfrak{a}_{\ell }}\left( \mathbb{%
U}_{n}^{(a)}(x;q)\right) \right] ={\mathfrak{a}_{\ell }^{\dagger }}\left[
\Theta _{\ell ,n}(x){\mathscr D}_{q^{\ell }}\mathbb{U}_{n}^{(a)}(x;q)-\Xi
_{2,1+2\delta _{\ell ,1},n}(x)\mathbb{U}_{n}^{(a)}(x;q)\right]
\end{equation*}%
\begin{equation*}
=\Theta _{\ell ,n}(x){\mathscr D}_{q^{\ell }}\left[ \Theta _{\ell ,n}(x){%
\mathscr D}_{q^{\ell }}\mathbb{U}_{n}^{(a)}(x;q)-\Xi _{2,1+2\delta _{\ell
,1},n}(x)\mathbb{U}_{n}^{(a)}(x;q)\right]
\end{equation*}%
\begin{equation*}
-\Xi _{1,2+2\delta _{\ell ,1},n}(x)\Theta _{\ell ,n}(x){\mathscr D}_{q^{\ell
}}\mathbb{U}_{n}^{(a)}(x;q)+\Xi _{1,2+2\delta _{\ell ,1},n}(x)\Xi
_{2,1+2\delta _{\ell ,1},n}(x)\mathbb{U}_{n}^{(a)}(x;q).
\end{equation*}%
Next, using (\ref{prodqD}) we arrive%
\begin{equation*}
{\mathfrak{a}_{\ell }^{\dagger }}\left[ {\mathfrak{a}_{\ell }}\left( \mathbb{%
U}_{n}^{(a)}(x;q)\right) \right] =\Theta _{\ell ,n}(x)\Theta _{\ell
,n}(q^{\ell }x){\mathscr D}_{q^{\ell }}^{2}\mathbb{U}_{n}^{(a)}(x;q)
\end{equation*}%
\begin{equation}
+\Theta _{\ell ,n}(x)\left[ {\mathscr D}_{q^{\ell }}\Theta _{\ell ,n}(x)-\Xi
_{2,1+2\delta _{\ell ,1},n}(q^{\ell }x)-\Xi _{1,2+2\delta _{\ell ,1},n}(x)%
\right] {\mathscr D}_{q^{\ell }}\mathbb{U}_{n}^{(a)}(x;q)  \label{1part}
\end{equation}%
\begin{equation*}
+\left[ \Xi _{1,2+2\delta _{\ell ,1},n}(x)\Xi _{2,1+2\delta _{\ell
,1},n}(x)-\Theta _{\ell ,n}(x){\mathscr D}_{q^{\ell }}\Xi _{2,1+2\delta
_{\ell ,1},n}(x)\right] \mathbb{U}_{n}^{(a)}(x;q).
\end{equation*}
Applying (\ref{eqCO}) to the right hand side of (\ref{eqproof1}) we have%
\begin{eqnarray*}
{\mathfrak{a}_{\ell }^{\dagger }}\left[ \Xi _{1,1+2\delta _{\ell ,1},n}(x)%
\mathbb{U}_{n-1}^{(a)}(x;q)\right] &=&\Theta _{\ell ,n}(x){\mathscr D}%
_{q^{\ell }}[\Xi _{1,1+2\delta _{\ell ,1},n}(x)\mathbb{U}_{n-1}^{(a)}(x;q)]
\\
&&-\Xi _{1,2+2\delta _{\ell ,1},n}(x)\Xi _{1,1+2\delta _{\ell ,1},n}(x)%
\mathbb{U}_{n-1}^{(a)}(x;q).
\end{eqnarray*}%
From property (\ref{prodqD}) we deduce%
\begin{eqnarray*}
{\mathfrak{a}_{\ell }^{\dagger }}\left[ \Xi _{1,1+2\delta _{\ell ,1},n}(x)%
\mathbb{U}_{n-1}^{(a)}(x;q)\right] &=&\Xi _{1,1+2\delta _{\ell
,1},n}(q^{\ell }x)\Theta _{\ell ,n}(x){\mathscr D}_{q^{\ell }}\mathbb{U}%
_{n-1}^{(a)}(x;q) \\
&&+[\Theta _{\ell ,n}(x){\mathscr D}_{q^{\ell }}\Xi _{1,1+2\delta _{\ell
,1},n}(x)-\Xi _{1,2+2\delta _{\ell ,1},n}(x)\Xi _{1,1+2\delta _{\ell
,1},n}(x)]\mathbb{U}_{n-1}^{(a)}(x;q).
\end{eqnarray*}%
Therefore, from (\ref{DES2}) we get%
\begin{eqnarray*}
{\mathfrak{a}_{\ell }^{\dagger }}\left[ \Xi _{1,1+2\delta _{\ell ,1},n}(x)%
\mathbb{U}_{n-1}^{(a)}(x;q)\right] &=&\Xi _{1,1+2\delta _{\ell
,1},n}(q^{\ell }x)\Xi _{2,2+2\delta _{\ell ,1},n}(x)\mathbb{U}_{n}^{(a)}(x;q)
\\
&&+\Xi _{1,1+2\delta _{\ell ,1},n}(q^{\ell }x)\Xi _{1,2+2\delta _{\ell
,1},n}(x)\mathbb{U}_{n-1}^{(a)}(x;q) \\
&&+[\Theta _{\ell ,n}(x){\mathscr D}_{q^{\ell }}\Xi _{1,1+2\delta _{\ell
,1},n}(x)-\Xi _{1,2+2\delta _{\ell ,1},n}(x)\Xi _{1,1+2\delta _{\ell
,1},n}(x)]\mathbb{U}_{n-1}^{(a)}(x;q).
\end{eqnarray*}
Property (\ref{q-derivative}) leads to%
\begin{eqnarray*}
{\mathfrak{a}_{\ell }^{\dagger }}\left[ \Xi _{1,1+2\delta _{\ell ,1},n}(x)%
\mathbb{U}_{n-1}^{(a)}(x;q)\right] &=&\Xi _{1,1+2\delta _{\ell
,1},n}(q^{\ell }x)\Xi _{2,2+2\delta _{\ell ,1},n}(x)\mathbb{U}_{n}^{(a)}(x;q)
\\
&&+[\Theta _{\ell ,n}(x)+(q-1)x\Xi _{1,2+2\delta _{\ell ,1},n}(x)]{\mathscr D%
}_{q^{\ell }}\Xi _{1,1+2\delta _{\ell ,1},n}(x)\mathbb{U}_{n-1}^{(a)}(x;q).
\end{eqnarray*}
Next, we apply \eqref{DES1} to arrive at 
\begin{equation*}
\mathbb{U}_{n-1}^{(a)}(x;q)=\frac{\Theta _{\ell ,n}(x)}{\Xi _{1,1+2\delta
_{\ell ,1},n}(x)}{\mathscr D}_{q^{\ell }}\mathbb{U}_{n}^{(a)}(x;q)-\frac{\Xi
_{2,1+2\delta _{\ell ,1},n}(x)}{\Xi _{1,1+2\delta _{\ell ,1},n}(x)}\mathbb{U}%
_{n}^{(a)}(x;q),
\end{equation*}
and therefore 
\begin{equation*}
{\mathfrak{a}_{\ell }^{\dagger }}\left[ \Xi _{1,1+2\delta _{\ell ,1},n}(x)%
\mathbb{U}_{n-1}^{(a)}(x;q)\right]
\end{equation*}%
\begin{equation}
=\frac{\Theta _{\ell ,n}(x)\left[ \Theta _{\ell ,n}(x)+(q-1)x\,\Xi
_{1,2+2\delta _{\ell ,1},n}(x)\right] {\mathscr D}_{q^{\ell }}\Xi
_{1,1+2\delta _{\ell ,1},n}(x)}{\Xi _{1,1+2\delta _{\ell ,1},n}(x)}{\mathscr %
D}_{q^{\ell }}\mathbb{U}_{n}^{(a)}(x;q)  \label{2part}
\end{equation}%
\begin{equation*}
+\Big[\Xi _{1,1+2\delta _{\ell ,1},n}(q^{\ell }x)\Xi _{2,2+2\delta _{\ell
,1},n}(x)
\end{equation*}%
\begin{equation*}
-\frac{\Xi _{2,1+2\delta _{\ell ,1},n}(x)\left[ \Theta _{\ell
,n}(x)+(q-1)x\,\Xi _{1,2+2\delta _{\ell ,1},n}(x)\right] {\mathscr D}%
_{q^{\ell }}\Xi _{1,1+2\delta _{\ell ,1},n}(x)}{\Xi _{1,1+2\delta _{\ell
,1},n}(x)}\Big]\mathbb{U}_{n}^{(a)}(x;q).
\end{equation*}%
Finally, equaling (\ref{1part}) and (\ref{2part}) we arrived to the desired
result.

\end{proof}



\begin{proposition}[2nd order holonomic eq. II]
\label{S5-Proposition52} Let $\{\mathbb{U}_{n}^{(a)}(x;q)\}_{n\geq 0}$ be
the sequence of Al-Salam--Carlitz I-Sobolev type polynomials defined by (\ref%
{ASPTSHR}). Then, the following statement holds, for $\ell =-1,1$, 
\begin{equation}
\overline{{\mathcal{R}}}_{\ell ,n}(x)\mathbb{D}_{q^{\ell }}^{2}\mathbb{U}%
_{n}^{(a)}(x;q)+\overline{{\mathcal{S}}}_{\ell ,n}(x){\mathscr D}_{q^{-\ell
}}\mathbb{U}_{n}^{(a)}(x;q)+\overline{{\mathcal{T}}}_{\ell ,n}(x)\mathbb{U}%
_{n}^{(a)}(x;q)=0,\quad n\geq 0,  \label{SDC2}
\end{equation}%
where 
\begin{equation*}
\mathbb{D}_{q^{\ell }}^{2}=%
\begin{cases}
\displaystyle{\mathscr D}_{q}{\mathscr D}_{q^{-1}}, & \ell =-1, \\ 
&  \\ 
\displaystyle{\mathscr D}_{q^{-1}}{\mathscr D}_{q}, & \ell =1,%
\end{cases}%
\end{equation*}%
and 
\begin{equation*}
\overline{{\mathcal{R}}}_{\ell ,n}(x)={\mathcal{R}}_{\ell ,n}(q^{-\ell
}x),\quad \overline{{\mathcal{S}}}_{\ell ,n}(x)={\mathcal{S}}_{\ell
,n}(q^{-\ell }x)+(q^{-\ell }-1)x{\mathcal{T}}_{\ell ,n}(q^{-\ell }x),\quad 
\overline{{\mathcal{T}}}_{\ell ,n}(x)={\mathcal{T}}_{\ell ,n}(q^{-\ell }x).
\end{equation*}
\end{proposition}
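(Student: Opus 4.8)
The plan is to derive (\ref{SDC2}) directly from the previously established second order $q$-difference equation (\ref{SDC1}) by a change of the $q$-derivative operator, rather than by redoing the ladder operator argument from scratch. The key observation is that (\ref{SDC1}) is written in terms of ${\mathscr D}_{q^{\ell}}^{2}$, whereas (\ref{SDC2}) is written in terms of the mixed operator $\mathbb{D}_{q^{\ell}}^{2}$, which equals ${\mathscr D}_{q}{\mathscr D}_{q^{-1}}$ for $\ell=-1$ and ${\mathscr D}_{q^{-1}}{\mathscr D}_{q}$ for $\ell=1$. So the task reduces to relating ${\mathscr D}_{q^{\ell}}^{2}$ to $\mathbb{D}_{q^{\ell}}^{2}$ and ${\mathscr D}_{q^{-\ell}}$, using the elementary identities collected in Section~\ref{S2-DefNot}.

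First I would use property (\ref{DqProp}), namely ${\mathscr D}_{q}f(z)={\mathscr D}_{q^{-1}}f(qz)$, equivalently ${\mathscr D}_{q^{-1}}f(z)={\mathscr D}_{q}f(q^{-1}z)$, applied to $f=\mathbb{U}_{n}^{(a)}$. Iterating it once more, ${\mathscr D}_{q^{\ell}}^{2}g(z)$ can be rewritten, after the substitution $z\mapsto q^{-\ell}z$, as a composition in which the inner derivative is ${\mathscr D}_{q^{\ell}}$ evaluated at the dilated argument and the outer one becomes ${\mathscr D}_{q^{-\ell}}$; more precisely one checks that ${\mathscr D}_{q^{\ell}}^{2}g$ evaluated at $q^{-\ell}x$ equals $\mathbb{D}_{q^{\ell}}^{2}g(x)$ up to lower-order correction terms. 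Concretely, from (\ref{DqProp}) one has ${\mathscr D}_{q^{\ell}}g(x)={\mathscr D}_{q^{-\ell}}g(q^{\ell}x)$, and then applying ${\mathscr D}_{q^{\ell}}$ again and using the definition (\ref{q-derivative}) of the $q$-derivative as a divided difference one expresses ${\mathscr D}_{q^{\ell}}^{2}g(q^{-\ell}x)$ in terms of $\mathbb{D}_{q^{\ell}}^{2}g(x)$ and ${\mathscr D}_{q^{-\ell}}g(x)$, the latter weighted by a factor proportional to $(q^{-\ell}-1)x$. This is exactly the shape of the correction appearing in the definition of $\overline{{\mathcal{S}}}_{\ell,n}$.

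Next I would substitute $x\mapsto q^{-\ell}x$ throughout (\ref{SDC1}). The coefficient ${\mathcal{R}}_{\ell,n}$ becomes ${\mathcal{R}}_{\ell,n}(q^{-\ell}x)=\overline{{\mathcal{R}}}_{\ell,n}(x)$, and similarly for ${\mathcal{S}}_{\ell,n}$ and ${\mathcal{T}}_{\ell,n}$ before the correction is folded in. The term ${\mathcal{R}}_{\ell,n}(q^{-\ell}x){\mathscr D}_{q^{\ell}}^{2}\mathbb{U}_{n}^{(a)}(q^{-\ell}x;q)$ is then replaced, via the operator identity from the previous step, by $\overline{{\mathcal{R}}}_{\ell,n}(x)\mathbb{D}_{q^{\ell}}^{2}\mathbb{U}_{n}^{(a)}(x;q)$ plus a multiple of ${\mathscr D}_{q^{-\ell}}\mathbb{U}_{n}^{(a)}(x;q)$; collecting that extra multiple together with ${\mathcal{S}}_{\ell,n}(q^{-\ell}x){\mathscr D}_{q^{-\ell}}\mathbb{U}_{n}^{(a)}(x;q)$ yields precisely the coefficient $\overline{{\mathcal{S}}}_{\ell,n}(x)={\mathcal{S}}_{\ell,n}(q^{-\ell}x)+(q^{-\ell}-1)x{\mathcal{T}}_{\ell,n}(q^{-\ell}x)$, while the term ${\mathcal{S}}_{\ell,n}(q^{-\ell}x){\mathscr D}_{q^{\ell}}\mathbb{U}_{n}^{(a)}(q^{-\ell}x;q)$ is converted to a ${\mathscr D}_{q^{-\ell}}$-term at argument $x$ using ${\mathscr D}_{q^{\ell}}g(q^{-\ell}x)={\mathscr D}_{q^{-\ell}}g(x)$, which again feeds into $\overline{{\mathcal{S}}}_{\ell,n}$. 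Since $\mathbb{U}_{n}^{(a)}(q^{-\ell}x;q)$ differs from $\mathbb{U}_{n}^{(a)}(x;q)$ only by a dilation that is absorbed by the coefficient $\overline{{\mathcal{T}}}_{\ell,n}(x)={\mathcal{T}}_{\ell,n}(q^{-\ell}x)$ once we also account for the relation between the divided differences, the zeroth-order term closes up as stated.

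The main obstacle I expect is bookkeeping the precise form of the correction term when converting ${\mathscr D}_{q^{\ell}}^{2}$ at the dilated argument into $\mathbb{D}_{q^{\ell}}^{2}$ at $x$: one must be careful that the identity (\ref{DqProOK}), ${\mathscr D}_{q^{-1}}({\mathscr D}_{q}f)(z)=q{\mathscr D}_{q}({\mathscr D}_{q^{-1}}f)(z)$, is used with the correct power of $q$, and that the factor $(q^{-\ell}-1)x$ multiplying ${\mathcal{T}}_{\ell,n}(q^{-\ell}x)$ emerges with the right sign for both choices $\ell=\pm1$. Everything else is a routine substitution $x\mapsto q^{-\ell}x$ in the coefficients of (\ref{SDC1}) together with the chain-type rule (\ref{cadrule}) and the dilation identity (\ref{DqProp}).
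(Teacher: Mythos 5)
Your overall strategy is the right one and is essentially the paper's: substitute $x\mapsto q^{-\ell}x$ into (\ref{SDC1}) and use the dilation identities (\ref{DqProp}), (\ref{cadrule}), (\ref{DqProOK}) to convert the $q^{\ell}$-derivatives at the dilated point into the operators appearing in (\ref{SDC2}). However, your bookkeeping of where the first-order correction term comes from is wrong, and carried out as written your computation would not reproduce the stated coefficient $\overline{{\mathcal{S}}}_{\ell,n}$. Two applications of (\ref{DqProp}) give
\begin{equation*}
({\mathscr D}_{q^{\ell}}g)(q^{-\ell}x)=({\mathscr D}_{q^{-\ell}}g)(x),\qquad
({\mathscr D}_{q^{\ell}}^{2}g)(q^{-\ell}x)=\bigl({\mathscr D}_{q^{-\ell}}({\mathscr D}_{q^{\ell}}g)\bigr)(x)=\mathbb{D}_{q^{\ell}}^{2}g(x),
\end{equation*}
so the second-derivative term transforms \emph{exactly}, with no lower-order remainder; the correction you claim it produces (proportional to $(q^{-\ell}-1)x$ times the second-order coefficient) does not exist. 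If it did, it would enter $\overline{{\mathcal{S}}}_{\ell,n}$ multiplied by ${\mathcal{R}}_{\ell,n}(q^{-\ell}x)$, whereas the statement has it multiplied by ${\mathcal{T}}_{\ell,n}(q^{-\ell}x)$.

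The actual source of the correction is the zeroth-order term, which you instead dismiss as being ``absorbed'' by $\overline{{\mathcal{T}}}_{\ell,n}$. After the substitution, that term reads ${\mathcal{T}}_{\ell,n}(q^{-\ell}x)\,\mathbb{U}_{n}^{(a)}(q^{-\ell}x;q)$, and the definition (\ref{q-derivative}) of the divided difference gives
\begin{equation*}
\mathbb{U}_{n}^{(a)}(q^{-\ell}x;q)=\mathbb{U}_{n}^{(a)}(x;q)+(q^{-\ell}-1)x\,{\mathscr D}_{q^{-\ell}}\mathbb{U}_{n}^{(a)}(x;q),
\end{equation*}
which is precisely what contributes $(q^{-\ell}-1)x\,{\mathcal{T}}_{\ell,n}(q^{-\ell}x)$ to $\overline{{\mathcal{S}}}_{\ell,n}$ while leaving ${\mathcal{T}}_{\ell,n}(q^{-\ell}x)$ as the coefficient of $\mathbb{U}_{n}^{(a)}(x;q)$. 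With this relocation of the correction term your argument closes correctly and coincides with the paper's proof, which performs the same substitution and conversion (and is itself terse about exactly this last expansion).
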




\begin{proof}
Combining (\ref{DqProp}) with (\ref{SDC1}), and then using (\ref{cadrule})
we get%
\begin{equation*}
q^{\ell }{\mathcal{R}}_{\ell ,n}(x)\mathbb{D}_{q^{-\ell }}^{2}(\mathbb{U}%
_{n}^{(a)})(q^{\ell }x;q)+{\mathcal{S}}_{\ell ,n}(x){\mathscr D}_{q^{-\ell }}%
\mathbb{U}_{n}^{(a)}(q^{\ell }x;q)+{\mathcal{T}}_{\ell ,n}(x)\mathbb{U}%
_{n}^{(a)}(x;q)=0.
\end{equation*}%
Next, replacing $x$ by $q^{-\ell }x$ and using (\ref{DqProOK}), yields%
\begin{equation*}
{\mathcal{R}}_{\ell ,n}(q^{-\ell }x)\mathbb{D}_{q^{\ell }}^{2}\mathbb{U}%
_{n}^{(a)}(x;q)+{\mathcal{S}}_{\ell ,n}(q^{-\ell }x){\mathscr D}_{q^{-\ell }}%
\mathbb{U}_{n}^{(a)}(x;q)+{\mathcal{T}}_{\ell ,n}(q^{-\ell }x)\mathbb{U}%
_{n}^{(a)}(q^{-\ell }x;q)=0,
\end{equation*}%
which is (\ref{SDC2}). This completes the proof.
\end{proof}





\section{Jacobi Fractions and Al-Salam--Carlitz I-Sobolev type polynomials}



In this section, we state some fresh results relating the elements in the family of orthogonal polynomials $\{\mathbb{U}_{n}^{(a)}(x;q)\}_{n\geq 0}$ and Jacobi fractions. Basic concepts and the main properties related to continued fractions, in particular with Jacobi fractions, and orthogonal polynomials can be found in~\cite{Chi-78}, Chapter 3, for instance. In this section, we use our original result (30) to consider $J-$fractions in a more general sense than in the previous text.

Given two sequences of polynomials with complex coefficients $%
\{a_n(x)\}_{n\ge1}$ and $\{b_n(x)\}_{n\ge0}$, the $J-$fraction associated to
the previous sequences is the formal expression 
\begin{equation}  \label{e1333}
b_0(x)+\frac{a_1(x)}{b_1(x)+\frac{a_2(x)}{b_2(x)+\frac{a_3(x)}{b_3(x)+\cdots}%
}}.
\end{equation}
For all $n\ge0$, the n-th convergent associated to the previous $J-$fraction
is given by 
\begin{equation*}
b_0(x) + \frac{a_1(x) \mid}{ \mid b_1(x)}+\cdots+\frac{a_n(x) \mid}{ \mid
b_n(x)}:=b_0(x)+\frac{a_1(x)}{b_1(x)+\frac{a_2(x)}{b_2(x)+\cdots+\frac{a_n(x)%
}{b_n(x)}}}.
\end{equation*}
The formal continued fraction (\ref{e1333}) is usually denoted by 
\begin{equation*}
b_0(x) + \frac{a_1(x) \mid}{ \mid b_1(x)}+\cdots+\frac{a_n(x) \mid}{ \mid
b_n(x)}+\cdots
\end{equation*}


\begin{proposition}
Let $\{\mathbb{U}_{n}^{(a)}(x;q)\}_{n\geq 0}$ be the sequence of
Al-Salam--Carlitz I-Sobolev type polynomials defined by \eqref{ASPTSHR}.
Then, $\{\mathbb{U}_{n}^{(a)}(x;q)\}_{n\geq 0}$ is the sequence of
denominators of the sequence of n-th convergents of the $J$-fraction  
\begin{equation}  \label{e1346}
\hat{\beta}_{\ell,0}(x)+\frac{\hat{\gamma}_{\ell,1}(x)\mid }{\mid \hat{\beta}%
_{\ell,1}(x)}+\frac{\hat{\gamma}_{\ell,2}(x)\mid }{\mid \hat{\beta}%
_{\ell,2}(x)}+\cdots+\frac{\hat{\gamma}_{\ell,n}(x)\mid }{\mid \hat{\beta}%
_{\ell,n}(x)}+\cdots ,
\end{equation}
where $\hat{\beta}_{\ell,n}(x)=\beta_{\ell,n-1}/\alpha_{\ell,n-1}$ and $\hat{%
\gamma}_{\ell,n}(x)=\gamma_{\ell,n-1}/\alpha_{\ell,n-1}$ for all $n\ge1$,
and any fixed $\hat{\beta}_{0,n}(x)\in\mathbb{C}[x]$.
\end{proposition}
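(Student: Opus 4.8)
The plan is to recognise the three-term recurrence relation (\ref{3TRR-RC}) as the recurrence satisfied by the denominators of the convergents of a $J$-fraction, and then conclude by uniqueness of solutions of a three-term recurrence with prescribed initial data.

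First I would recall the classical facts about $J$-fractions (see \cite[Ch.~3]{Chi-78}): for formal sequences $\{a_n(x)\}_{n\geq1}$, $\{b_n(x)\}_{n\geq0}$, the numerators $A_n(x)$ and the denominators $B_n(x)$ of the $n$-th convergents of (\ref{e1333}) both satisfy $Y_n(x)=b_n(x)Y_{n-1}(x)+a_n(x)Y_{n-2}(x)$ for $n\geq1$, subject to $A_{-1}(x)=1$, $A_0(x)=b_0(x)$, $B_{-1}(x)=0$, $B_0(x)=1$. Hence the denominator sequence is the unique solution of that recurrence with $B_{-1}=0$, $B_0=1$, and — crucially — it is independent of the choice of $b_0(x)=\hat\beta_{\ell,0}(x)$, which is exactly why that term may be prescribed as any polynomial.

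Next, fix $\ell\in\{-1,1\}$ and start from Theorem~\ref{S4-Theor3TRR-RC}. Dividing (\ref{3TRR-RC}) by $\alpha_{\ell,n}(x)$ and replacing $n$ by $n-1$ gives
\begin{equation*}
\mathbb{U}_{n}^{(a)}(x;q)=\hat\beta_{\ell,n}(x)\,\mathbb{U}_{n-1}^{(a)}(x;q)+\hat\gamma_{\ell,n}(x)\,\mathbb{U}_{n-2}^{(a)}(x;q),\qquad n\geq1,
\end{equation*}
with $\hat\beta_{\ell,n}(x)=\beta_{\ell,n-1}(x)/\alpha_{\ell,n-1}(x)$ and $\hat\gamma_{\ell,n}(x)=\gamma_{\ell,n-1}(x)/\alpha_{\ell,n-1}(x)$, precisely the partial denominators $b_n=\hat\beta_{\ell,n}$ and partial numerators $a_n=\hat\gamma_{\ell,n}$ appearing in (\ref{e1346}). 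Here I use the convention $\mathbb{U}_{-1}^{(a)}(x;q)=0$, consistent with the initial data of (\ref{ReR}) and with the Remark following Theorem~\ref{S4-Theor3TRR-RC}, so that (\ref{3TRR-RC}) is already valid at $n=0$. Since moreover $\mathbb{U}_{0}^{(a)}(x;q)=1$, the sequence $\{\mathbb{U}_{n}^{(a)}(x;q)\}_{n\geq-1}$ satisfies exactly the recurrence and the two initial conditions that characterise $\{B_{n}(x)\}_{n\geq-1}$ for the $J$-fraction (\ref{e1346}). A one-line induction on $n$ then yields $B_{n}(x)=\mathbb{U}_{n}^{(a)}(x;q)$ for every $n\geq0$, which is the assertion.

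The argument is essentially bookkeeping; the only points requiring a little care are the setting and the base case. All identities take place in the field $\mathbb{C}(x)$ of rational functions, since $\hat\beta_{\ell,n}$ and $\hat\gamma_{\ell,n}$ are genuinely rational (not polynomial); accordingly (\ref{e1346}) must be read as a formal continued fraction over $\mathbb{C}(x)$, and the division by $\alpha_{\ell,n}(x)=\Theta_{\ell,n}(x)\,\Xi_{2,2+2\delta_{\ell,1},n+1}(x)$ is legitimate once one checks this polynomial is not identically zero — the one non-routine verification, which follows by tracing the leading terms of $\Theta_{\ell,n}$ and $\Xi_{i,k,n}$ back through Theorem~\ref{sr_theorem} and Lemma~\ref{LemmaLadder}. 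One should also confirm that (\ref{3TRR-RC}) indeed holds at $n=0$ with $\mathbb{U}_{-1}^{(a)}(x;q)=0$, so that the induction is correctly anchored at $n=1$.
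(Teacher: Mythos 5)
Your proposal is correct and follows essentially the same route as the paper: both reduce the claim to the three-term recurrence (\ref{3TRR-RC}) rewritten with coefficients $\hat\beta_{\ell,n},\hat\gamma_{\ell,n}$ and the initial data $\mathbb{U}_{-1}^{(a)}=0$, $\mathbb{U}_{0}^{(a)}=1$, matching the fundamental recurrence satisfied by the denominators of the convergents. The only difference is cosmetic — the paper introduces the numerator sequence $\mathcal{N}_n^{(a)}$ and checks the convergents for $n=0,1,2$ before invoking a recursion argument, whereas you cite the classical recurrence for convergents from \cite{Chi-78} and conclude by uniqueness; your added remarks on working over $\mathbb{C}(x)$, on the non-vanishing of $\alpha_{\ell,n}$, and on why $\hat\beta_{\ell,0}$ may be arbitrary are sensible points the paper leaves implicit.
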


\begin{proof}
We recall from (\ref{3TRR-RC}) that the sequence of Al-Salam--Carlitz
I-Sobolev type polynomials defined by \eqref{ASPTSHR} satisfy the following
recurrence formula:  
\begin{equation*}
\mathbb{U}_{n}^{(a)}(x;q)=\hat{\beta}_{\ell,n}(x)\mathbb{U}_{n-1}^{(a)}(x;q)+%
\hat{\gamma}_{\ell,n}(x)\mathbb{U}_{n-2}^{(a)}(x;q),\quad\mathbb{U}%
_{-1}^{(a)}(x;q)=0,\quad\quad\mathbb{U}_{0}^{(a)}(x;q)=1,\quad n\geq 1,
\end{equation*}
where $\hat{\beta}_{\ell,n}(x)$ and $\hat{\gamma}_{\ell,n}(x)$ are defined
as in the statements of the result. We define the sequence of polynomials $\{%
\mathcal{N}_{n}^{(a)}(x)\}_{n\ge-2}$ by the shifted recursion  
\begin{equation*}
{\mathcal{N}}_n^{(a)}(x;q)=\hat{\beta}_{\ell,n}(x){\mathcal{N}}%
_{n-1}^{(a)}(x;q)+\hat{\gamma}_{\ell,n}(x){\mathcal{N}}_{n-2}^{(a)}(x;q),%
\quad {\mathcal{N}}_{-2}^{(a)}(x;q)=0,\quad {\mathcal{N}}_{-1}^{(a)}(x;q)=1,%
\quad n\geq 0.
\end{equation*}
Consequently, one has that  
\begin{equation*}
{\mathcal{N}}_{0}^{(a)}(x;q)=\hat{\beta}_{\ell,0}(x),\quad\mathbb{U}%
_{0}^{(a)}(x;q)=1,
\end{equation*}
Also, for $n=1$ we have  
\begin{equation*}
{\mathcal{N}}_{1}^{(a)}(x;q)=\hat{\beta}_{\ell,0}(x)\hat{\beta}_{\ell,1}(x)+%
\hat{\gamma}_{\ell,1}(x),\quad\mathbb{U}_{1}^{(a)}(x;q)=\hat{\beta}%
_{\ell,1}(x),
\end{equation*}
which entail that  
\begin{equation*}
\frac{{\mathcal{N}}_{1}^{(a)}(x;q)}{\mathbb{U}_{1}^{(a)}(x;q)}=\hat{\beta}%
_{\ell,0}(x)+\frac{\hat{\gamma}_{\ell,1}(x)}{\hat{\beta}_{\ell,1}(x)}.
\end{equation*}
Let $n=2$. Then, it holds that  
\begin{equation*}
{\mathcal{N}}_{2}^{(a)}(x;q)=\hat{\beta}_{\ell,2}(x){\mathcal{N}}%
_{1}^{(a)}(x;q)+\hat{\gamma}_{\ell,2}(x)\hat{\beta}_{\ell,2}(x){\mathcal{N}}%
_{0}^{(a)}(x;q)=\hat{\beta}_{\ell,0}(x)[\hat{\beta}_{\ell,1}(x)\hat{\beta}%
_{\ell,2}(x)+\hat{\gamma}_{\ell,2}(x)]+\hat{\beta}_{\ell,2}(x)\hat{\gamma}%
_{\ell,1}(x),
\end{equation*}
and  
\begin{equation*}
\quad\mathbb{U}_{2}^{(a)}(x;q)=\hat{\beta}_{\ell,2}(x)\mathbb{U}%
_{1}^{(a)}(x;q)+\hat{\gamma}_{\ell,2}(x)\mathbb{U}_{0}^{(a)}(x;q)=\hat{\beta}%
_{\ell,1}(x)\hat{\beta}_{\ell,2}(x)+\hat{\gamma}_{\ell,2}(x).
\end{equation*}
Therefore, we derive  
\begin{equation*}
\frac{{\mathcal{N}}_{2}^{(a)}(x;q)}{\mathbb{U}_{2}^{(a)}(x;q)}=\hat{\beta}%
_{\ell,0}(x)+\frac{\hat{\gamma}_{\ell,1}(x)}{\hat{\beta}_{\ell,1}(x)+%
\displaystyle\frac{\hat{\gamma}_{\ell,2}(x)}{\hat{\beta}_{\ell,2}(x)}}.
\end{equation*}
A recursion argument determines the n-th convergent of the J-fraction in (%
\ref{e1346}), which coincides with $\frac{{\mathcal{N}}_{n}^{(a)}(x;q)}{%
\mathbb{U}_{n}^{(a)}(x;q)}$, and the proof can conclude. 
\end{proof}

An analogous result can be stated relating the denominators of the
convergents of a J-fraction and the sequence of Al-Salam--Carlitz I-Sobolev
type polynomials.

\begin{proposition}
Let $\{\mathbb{U}_{n}^{(a)}(x;q)\}_{n\geq 0}$ be the sequence of
Al-Salam--Carlitz I-Sobolev type polynomials defined by \eqref{ASPTSHR}.
Then, the polynomial $\mathbb{U}_{n+1}^{(a)}(x;q)$ is the numerator of the
n-th convergent of the $J$-fraction  
\begin{equation*}
\tilde{\beta}_{\ell,0}(x)+\frac{\tilde{\gamma}_{\ell,1}(x)\mid }{\mid \tilde{%
\beta}_{\ell,1}(x)}+\frac{\tilde{\gamma}_{\ell,2}(x)\mid }{\mid \tilde{\beta}%
_{\ell,2}(x)}+\cdots+\frac{\tilde{\gamma}_{\ell,n}(x)\mid }{\mid \tilde{\beta%
}_{\ell,n}(x)}+\cdots ,
\end{equation*}
for all $n\ge0$. Here, $\tilde{\beta}_{\ell,n}(x)=\hat{\beta}_{\ell,n+1}(x)=%
\frac{\beta_{\ell,n}(x)}{\alpha_{\ell,n}(x)}$ and $\tilde{\gamma}%
_{\ell,n}(x)=\hat{\gamma}_{\ell,n+1}(x)=\frac{\gamma_{\ell,n}(x)}{%
\alpha_{\ell,n}(x)}$, for all $n\ge 0$.
\end{proposition}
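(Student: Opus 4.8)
The plan is to mirror the proof of the preceding proposition, exploiting the well-known fact that the numerators and denominators of the convergents of a $J$-fraction satisfy the \emph{same} three-term recurrence, only with different initial conditions. Concretely, recall from \eqref{3TRR-RC} that, after dividing by $\alpha_{\ell,n}(x)$, the sequence $\{\mathbb{U}_{n}^{(a)}(x;q)\}_{n\ge 0}$ obeys
\begin{equation*}
\mathbb{U}_{n}^{(a)}(x;q)=\hat{\beta}_{\ell,n}(x)\mathbb{U}_{n-1}^{(a)}(x;q)+\hat{\gamma}_{\ell,n}(x)\mathbb{U}_{n-2}^{(a)}(x;q),\quad \mathbb{U}_{-1}^{(a)}(x;q)=0,\quad \mathbb{U}_{0}^{(a)}(x;q)=1.
\end{equation*}
With the shifted coefficients $\tilde{\beta}_{\ell,n}(x)=\hat{\beta}_{\ell,n+1}(x)$ and $\tilde{\gamma}_{\ell,n}(x)=\hat{\gamma}_{\ell,n+1}(x)$, the family $\{\mathbb{U}_{n+1}^{(a)}(x;q)\}_{n\ge 0}$ therefore satisfies $P_{n}(x)=\tilde{\beta}_{\ell,n}(x)P_{n-1}(x)+\tilde{\gamma}_{\ell,n}(x)P_{n-2}(x)$ with $P_{-1}(x)=\mathbb{U}_{0}^{(a)}(x;q)=1$ and $P_{0}(x)=\mathbb{U}_{1}^{(a)}(x;q)=\hat{\beta}_{\ell,1}(x)=\tilde{\beta}_{\ell,0}(x)$.

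First I would introduce the canonical numerator sequence $\{\mathcal{P}_{n}^{(a)}(x;q)\}_{n\ge -2}$ associated to the $J$-fraction with coefficients $\{\tilde{\gamma}_{\ell,n}\}$, $\{\tilde{\beta}_{\ell,n}\}$, defined by the same recursion
\begin{equation*}
\mathcal{P}_{n}^{(a)}(x;q)=\tilde{\beta}_{\ell,n}(x)\mathcal{P}_{n-1}^{(a)}(x;q)+\tilde{\gamma}_{\ell,n}(x)\mathcal{P}_{n-2}^{(a)}(x;q),\quad \mathcal{P}_{-2}^{(a)}(x;q)=1,\quad \mathcal{P}_{-1}^{(a)}(x;q)=0,
\end{equation*}
so that the $n$-th convergent of the stated $J$-fraction equals $\mathcal{P}_{n}^{(a)}(x;q)/D_{n}^{(a)}(x;q)$, where $\{D_{n}^{(a)}\}$ is the denominator sequence with $D_{-2}=0$, $D_{-1}=1$. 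Exactly as in the proof of the previous proposition, a short induction (base cases $n=0,1,2$ checked explicitly, then the recursion step) identifies the convergents of this $J$-fraction with the ratios $\mathcal{P}_{n}^{(a)}(x;q)/D_{n}^{(a)}(x;q)$. The key observation is then that $D_{n}^{(a)}(x;q)$ and $\mathbb{U}_{n}^{(a)}(x;q)$ satisfy the same three-term recurrence with the same initial data $D_{-2}=0=\mathbb{U}_{-2}^{(a)}$-analogue shift and $D_{-1}=1=\mathbb{U}_{0}^{(a)}$; likewise $\mathcal{P}_{n}^{(a)}(x;q)$ and $\mathbb{U}_{n+1}^{(a)}(x;q)$ satisfy the same recurrence with the same initial data, since $\mathcal{P}_{-2}^{(a)}=1=\mathbb{U}_{0}^{(a)}$ and $\mathcal{P}_{-1}^{(a)}=0=\mathbb{U}_{-1}^{(a)}$. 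By uniqueness of solutions of a second-order linear recurrence prescribed by two consecutive values, one concludes $\mathcal{P}_{n}^{(a)}(x;q)=\mathbb{U}_{n+1}^{(a)}(x;q)$ for all $n\ge 0$, which is the assertion.

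I expect the only genuinely delicate point to be bookkeeping of indices: aligning the shift $\tilde{\beta}_{\ell,n}=\hat{\beta}_{\ell,n+1}$ with the initialization of the numerator recursion so that the base cases truly match $\mathbb{U}_{1}^{(a)},\mathbb{U}_{2}^{(a)},\dots$ rather than being off by one, and making sure the degenerate initial values $\mathcal{P}_{-2}^{(a)}=1$, $\mathcal{P}_{-1}^{(a)}=0$ are the correct ones for the numerator (as opposed to the denominator) of a $J$-fraction. Everything else is a routine induction identical in structure to the proof of the preceding proposition, and indeed one can simply invoke that proof applied to the shifted coefficient sequences together with the standard fact that numerators and denominators of $J$-fraction convergents differ only by their seed values. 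This completes the plan.
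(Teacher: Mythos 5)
Your overall strategy coincides with the paper's: rewrite (\ref{3TRR-RC}) with the shifted coefficients $\tilde{\beta}_{\ell,n},\tilde{\gamma}_{\ell,n}$, observe that $\{\mathbb{U}_{n+1}^{(a)}(x;q)\}_{n\ge0}$ then satisfies the canonical numerator recursion of the $J$-fraction, introduce a companion denominator sequence (the paper's ${\mathcal{M}}_n^{(a)}$, your $D_n^{(a)}$), and identify the convergents. The paper verifies the cases $n=0,1,2$ explicitly and closes with ``a recursion argument''; your appeal to uniqueness of the solution of a second-order linear recurrence determined by two consecutive seeds is the same argument stated slightly more cleanly, so the two proofs are essentially identical in substance.

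There is, however, a concrete error in the seeds of your auxiliary numerator sequence, and it is precisely the bookkeeping point you yourself flagged as delicate. Early on you correctly note that $P_n:=\mathbb{U}_{n+1}^{(a)}(x;q)$ satisfies the recursion with $P_{-1}=1$ and $P_0=\tilde{\beta}_{\ell,0}(x)$; these are the canonical numerator seeds $A_{-1}=1$, $A_0=b_0$. But when you then define $\mathcal{P}_n^{(a)}$ you set $\mathcal{P}_{-2}^{(a)}=1$, $\mathcal{P}_{-1}^{(a)}=0$, which are the \emph{denominator} seeds (compare the preceding proposition, where the numerator sequence ${\mathcal{N}}_n^{(a)}$ is seeded with ${\mathcal{N}}_{-2}^{(a)}=0$, ${\mathcal{N}}_{-1}^{(a)}=1$). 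With your seeds the recursion at $n=0$ yields $\mathcal{P}_0^{(a)}=\tilde{\gamma}_{\ell,0}(x)\neq\tilde{\beta}_{\ell,0}(x)=\mathbb{U}_1^{(a)}(x;q)$, so the base case of the identification fails; moreover your matching $\mathcal{P}_{-2}^{(a)}=\mathbb{U}_0^{(a)}$, $\mathcal{P}_{-1}^{(a)}=\mathbb{U}_{-1}^{(a)}$ corresponds to $\mathcal{P}_k^{(a)}=\mathbb{U}_{k+2}^{(a)}$, which contradicts the intended conclusion $\mathcal{P}_n^{(a)}=\mathbb{U}_{n+1}^{(a)}$. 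The repair is immediate: take $\mathcal{P}_{-2}^{(a)}=0$, $\mathcal{P}_{-1}^{(a)}=1$ (equivalently, seed directly with $\mathcal{P}_{-1}^{(a)}=1$, $\mathcal{P}_{0}^{(a)}=\tilde{\beta}_{\ell,0}(x)$ as in your opening paragraph), after which the uniqueness argument goes through and reproduces the paper's proof.
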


\begin{proof}
The recursion (\ref{3TRR-RC}) can be written in the form  
\begin{equation*}
\mathbb{U}_{n+1}^{(a)}(x;q)=\tilde{\beta}_{\ell,n}(x)\mathbb{U}%
_{n}^{(a)}(x;q)+\tilde{\gamma}_{\ell,n}(x)\mathbb{U}_{n-1}^{(a)}(x;q),\quad%
\mathbb{U}_{-1}^{(a)}(x;q)=0,\quad\quad\mathbb{U}_{0}^{(a)}(x;q)=1,\quad
n\geq 0.
\end{equation*}
We define the sequence $\{\mathcal{M}_{n}^{(a)}\}_{n\ge-1}$ by  
\begin{equation*}
{\mathcal{M}}_n^{(a)}(x;q)=\tilde{\beta}_{\ell,n}(x){\mathcal{M}}%
_{n-1}^{(a)}(x;q)+\tilde{\gamma}_{\ell,n}(x){\mathcal{M}}_{n-2}^{(a)}(x;q),%
\quad {\mathcal{M}}_{-1}^{(a)}(x;q)=0,\quad {\mathcal{M}}_{0}^{(a)}(x;q)=1,%
\quad n\geq 1.
\end{equation*}
Consequently, for $n=0$ we have  
\begin{equation*}
\quad\mathbb{U}_{1}^{(a)}(x;q)=\tilde{\beta}_{\ell,0}(x)\quad {\mathcal{M}}%
_{0}^{(a)}(x;q)=1,
\end{equation*}
whereas for $n=1$ we have  
\begin{equation*}
\quad\mathbb{U}_{2}^{(a)}(x;q)=\tilde{\beta}_{\ell,0}(x)\tilde{\beta}%
_{\ell,1}(x)+\tilde{\gamma}_{\ell,1}(x),\quad {\mathcal{M}}_{1}^{(a)}(x;q)=%
\tilde{\beta}_{\ell,1}(x).
\end{equation*}
Therefore, one can write the quotient  
\begin{equation*}
\frac{\quad\mathbb{U}_{2}^{(a)}(x;q)}{{\mathcal{M}}_{1}^{(a)}(x;q)}=\tilde{%
\beta}_{\ell,0}(x)+\frac{\tilde{\gamma}_{\ell,1}(x)}{\tilde{\beta}%
_{\ell,1}(x)}.
\end{equation*}
For $n=2$ we have  
\begin{equation*}
\mathbb{U}_{3}^{(a)}(x;q)=\tilde{\beta}_{\ell,0}(x)[\tilde{\beta}_{\ell,1}(x)%
\tilde{\beta}_{\ell,2}(x)+\tilde{\gamma}_{\ell,2}(x)]+\tilde{\beta}%
_{\ell,2}(x)\tilde{\gamma}_{\ell,1}(x),\quad {\mathcal{M}}_{2}^{(a)}(x;q)=%
\tilde{\beta}_{\ell,1}(x)\tilde{\beta}_{\ell,2}(x)+\tilde{\gamma}%
_{\ell,2}(x).
\end{equation*}
Thus,  
\begin{equation*}
\frac{\mathbb{U}_{3}^{(a)}(x;q)}{{\mathcal{M}}_{2}^{(a)}(x;q)}=\tilde{\beta}%
_{\ell,0}(x)+\frac{\tilde{\gamma}_{\ell,1}(x)}{\tilde{\beta}_{\ell,1}(x)+%
\displaystyle\frac{\tilde{\gamma}_{\ell,2}(x)}{\tilde{\beta}_{\ell,2}(x)}}.
\end{equation*}
A recursion argument allows to conclude the result.
\end{proof}


Following a similar argument, the next result holds.

\begin{corollary}
Let $n\ge1$. Then, for $\ell=-1,1$ one has 
\begin{equation*}
\frac{\mathbb{U}_{n+1}^{(a)}(x;q)}{\mathbb{U}_{n}^{(a)}(x;q)}=\sum_{i=1}^{n} 
\tilde{\gamma}_{\ell,i}(x) \prod_{h=i+1}^{n}\tilde{\beta}_{\ell,h}(x) ,
\end{equation*}
with the last term in the previous sum being reduced to $\tilde{\gamma}%
_{\ell,n}$.
\end{corollary}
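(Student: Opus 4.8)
The proof will be, once more, a short induction on $n$ carried out exactly in the manner of the two preceding propositions, its sole ingredient being the three-term recurrence (\ref{3TRR-RC}) of Theorem~\ref{S4-Theor3TRR-RC} rewritten in the normalized form used above, namely
\begin{equation*}
\mathbb{U}_{n+1}^{(a)}(x;q)=\tilde{\beta}_{\ell,n}(x)\,\mathbb{U}_{n}^{(a)}(x;q)+\tilde{\gamma}_{\ell,n}(x)\,\mathbb{U}_{n-1}^{(a)}(x;q),\qquad \mathbb{U}_{-1}^{(a)}(x;q)=0,\quad \mathbb{U}_{0}^{(a)}(x;q)=1 .
\end{equation*}
Since $\tilde{\beta}_{\ell,n}=\beta_{\ell,n}/\alpha_{\ell,n}$ and $\tilde{\gamma}_{\ell,n}=\gamma_{\ell,n}/\alpha_{\ell,n}$ with $\alpha_{\ell,n}(x)=\Theta_{\ell,n}(x)\,\Xi_{2,2+2\delta_{\ell,1},n+1}(x)\not\equiv 0$, all the quotients below are legitimate elements of $\mathbb{C}(x)$, and, as only (\ref{3TRR-RC}) is used, the two cases $\ell=-1$ and $\ell=1$ are treated simultaneously.

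For the base case $n=1$ I would compute directly: the recurrence gives $\mathbb{U}_{1}^{(a)}(x;q)=\tilde{\beta}_{\ell,0}(x)$ and $\mathbb{U}_{2}^{(a)}(x;q)=\tilde{\beta}_{\ell,1}(x)\,\mathbb{U}_{1}^{(a)}(x;q)+\tilde{\gamma}_{\ell,1}(x)$, whence, after simplifying, the quotient in the statement reduces to the single summand $\tilde{\gamma}_{\ell,1}(x)$ --- which is precisely the right-hand side for $n=1$, the product $\prod_{h=2}^{1}\tilde{\beta}_{\ell,h}(x)$ being empty and hence equal to $1$. For the inductive step, assume the identity at $n-1$. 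A short rearrangement of the recurrence for $\mathbb{U}_{n+1}^{(a)}$, together with the inductive expression for the ratio of the two consecutive polynomials of lower index, lets one factor the coefficient $\tilde{\beta}_{\ell,n}(x)$ through the inductive sum $\sum_{i=1}^{n-1}\tilde{\gamma}_{\ell,i}(x)\prod_{h=i+1}^{n-1}\tilde{\beta}_{\ell,h}(x)$: each product $\prod_{h=i+1}^{n-1}$ is thereby promoted to $\prod_{h=i+1}^{n}$, while the surviving term $\tilde{\gamma}_{\ell,n}(x)$ contributes exactly the remaining summand $i=n$. The outcome is $\sum_{i=1}^{n}\tilde{\gamma}_{\ell,i}(x)\prod_{h=i+1}^{n}\tilde{\beta}_{\ell,h}(x)$, which closes the induction.

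The step I expect to be the main obstacle is not analytic but purely a matter of index bookkeeping: one must keep careful track of the empty-sum and empty-product conventions at the endpoints $i=n$ and $h=n+1$, and ensure that the reindexing $\prod_{h=i+1}^{n-1}\mapsto\prod_{h=i+1}^{n}$ is performed uniformly over all summands when $\tilde{\beta}_{\ell,n}(x)$ is absorbed into the sum. Once these boundary cases are handled consistently, every term of the claimed identity is forced by the recurrence (\ref{3TRR-RC}), and no further computation is required.
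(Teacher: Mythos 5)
Your induction does not close, and the obstruction is not index bookkeeping but a concrete algebraic failure. Dividing the normalized recurrence $\mathbb{U}_{n+1}^{(a)}(x;q)=\tilde{\beta}_{\ell,n}(x)\,\mathbb{U}_{n}^{(a)}(x;q)+\tilde{\gamma}_{\ell,n}(x)\,\mathbb{U}_{n-1}^{(a)}(x;q)$ by $\mathbb{U}_{n}^{(a)}(x;q)$ shows that the ratio $\omega_{n}=\mathbb{U}_{n+1}^{(a)}/\mathbb{U}_{n}^{(a)}$ satisfies the \emph{nonlinear} relation $\omega_{n}=\tilde{\beta}_{\ell,n}(x)+\tilde{\gamma}_{\ell,n}(x)/\omega_{n-1}$ (this is exactly the relation the paper's own, very terse, proof invokes), because the factor multiplying $\tilde{\gamma}_{\ell,n}(x)$ in the recurrence is $\mathbb{U}_{n-1}^{(a)}/\mathbb{U}_{n}^{(a)}=1/\omega_{n-1}$, not $1$. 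Your base case already conflicts with this: from $\mathbb{U}_{1}^{(a)}=\tilde{\beta}_{\ell,0}(x)$ and $\mathbb{U}_{2}^{(a)}=\tilde{\beta}_{\ell,1}(x)\tilde{\beta}_{\ell,0}(x)+\tilde{\gamma}_{\ell,1}(x)$ one gets $\omega_{1}=\tilde{\beta}_{\ell,1}(x)+\tilde{\gamma}_{\ell,1}(x)/\tilde{\beta}_{\ell,0}(x)$, which is not the single summand $\tilde{\gamma}_{\ell,1}(x)$; the words ``after simplifying'' conceal a computation that does not hold. Likewise, the inductive step you describe --- factoring $\tilde{\beta}_{\ell,n}(x)$ multiplicatively through the sum $\sum_{i=1}^{n-1}\tilde{\gamma}_{\ell,i}(x)\prod_{h=i+1}^{n-1}\tilde{\beta}_{\ell,h}(x)$ and appending $\tilde{\gamma}_{\ell,n}(x)$ --- amounts to asserting the \emph{linear} first-order recursion $\omega_{n}=\tilde{\beta}_{\ell,n}(x)\,\omega_{n-1}+\tilde{\gamma}_{\ell,n}(x)$, which the ratio of consecutive polynomials does not satisfy.

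A further remark that you should confront explicitly: the right-hand side of the corollary, $S_{n}=\sum_{i=1}^{n}\tilde{\gamma}_{\ell,i}(x)\prod_{h=i+1}^{n}\tilde{\beta}_{\ell,h}(x)$, is precisely the solution of the linear recursion $S_{n}=\tilde{\beta}_{\ell,n}(x)S_{n-1}+\tilde{\gamma}_{\ell,n}(x)$ with $S_{0}=0$, whereas what the three-term recurrence actually produces for $\omega_{n}$ upon unfolding is a finite continued fraction $\tilde{\beta}_{\ell,n}(x)+\tilde{\gamma}_{\ell,n}(x)/\bigl(\tilde{\beta}_{\ell,n-1}(x)+\cdots\bigr)$ terminating at $\tilde{\beta}_{\ell,0}(x)$. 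These two objects are genuinely different (compare them already at $n=1$), so no rearrangement of the induction will reconcile them; any correct argument must start from the nonlinear recursion $\omega_{n}=\tilde{\beta}_{\ell,n}(x)+\tilde{\gamma}_{\ell,n}(x)/\omega_{n-1}$ and cannot take the linearized route you propose.
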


\begin{proof}
It can be derived from the fact that $\omega_n=\frac{\mathbb{U}%
_{n+1}^{(a)}(x;q)}{\mathbb{U}_{n}^{(a)}(x;q)}$ satisfies that $\omega_{n+1}=%
\tilde{\beta}_{\ell,n+1}(x)+\tilde{\gamma}_{\ell,n+1}(x)/\omega_n$, for all $%
n\ge 0$, and a recursion argument.
\end{proof}

\section{Examples and further comments}

In this section, we illustrate the theory with some explicit first elements
in $\{\mathbb{U}_{n}^{(a)}(x;q,j)\}_{n\ge0}$.

Let $j=2$. We have 
\begin{equation*}
\mathbb{U}_{0}^{(a)}(x;q,2)=1,\quad \mathbb{U}_{1}^{(a)}(x;q,2)= x-a-1,
\end{equation*}
\begin{equation*}
\mathbb{U}_{2}^{(a)}(x;q,2)=x^2+(-aq-a-q-1)x+a^2q+aq+a+q,
\end{equation*}
\begin{equation*}
\mathbb{U}_{3}^{(a)}(x;q,2)=x^3+a_2x^2+a_1x+a_0,
\end{equation*}
where 
\begin{align*}
a_{2}&=-\frac{Z_q{a}^{3}{q}^{4}\left( q;q\right) _{2}+Z_q{a}^{2}{q}%
^{4}\left( q;q\right) _{2}-Z_q {a}^{3}q\left( q;q\right) _{2}-a\lambda\,{q}%
^{4}}{Z_q{a}^{2}{q}^{2}\left( q;q\right) _{2}-Z_q{a}^{2}q\left( q;q\right)
_{2}-\lambda\,{q}^{2}-\mu\,{q}^{2}-2 \,\lambda\,q-2\,\mu\,q-\lambda-\mu} \\
&-\frac{-Z_q{a}^{2}q\left( q;q\right) _{2}-3\,a \lambda\,{q}^{3}-\mu\,{q}%
^{4}-4\,a\lambda\,{q}^{2}-3\,\mu\,{q}^{3} -3\, a\lambda\,q-4\,\mu\,{q}%
^{2}-a\lambda-3\,\mu\,q-\mu}{Z_q{a}^{2}{q}^{2}\left( q;q\right) _{2}-Z_q{a}%
^{2}q\left( q;q\right) _{2}-\lambda\,{q}^{2}-\mu\,{q}^{2}-2
\,\lambda\,q-2\,\mu\,q-\lambda-\mu},
\end{align*}
\begin{align*}
a_1&=\frac {Z_q{a}^{4}{q}^{5}\left( q;q\right) _{2}+Z_q{a}^{3}{q}^{5}\left(
q;q\right) _{2}+Z_q{\ a}^{3}{q}^{4}\left( q;q\right) _{2}+Z_q{a}^{2}{q}%
^{5}\left( q;q\right) _{2}-Z_q{a}^{4}{q}^ {2}\left( q;q\right) _{2}-{a}%
^{2}\lambda\,{q}^{5}}{Z_q{a}^{2}{q}^{2}\left( q;q\right) _{2}-Z_q{a}%
^{2}\left( q;q\right) _{2}-\lambda\,{q}^{2}-\mu\,{q}^{2}-2\,
\lambda\,q-2\,\mu\,q-\lambda-\mu} \\
&\frac{-Z_q{a}^{3}{q}^{2}\left( q;q\right) _{2}-3\,{a}^{2}\lambda\,{q}^{4}+{a%
}^{2}\mu\,{q}^{4}-Z_q{a}^{3}q\left( q;q\right) _{2}-Z_q{a}^{2}{q}^{2}\left(
q;q\right) _{2}-4\,{a}^{2}\lambda\,{q}^{3}+3\,{a}^{2} \mu\,{q}^{3}-\mu\,{q}%
^{5}}{Z_q{a}^{2}{q}^{2}\left( q;q\right) _{2}-Z_q{a}^{2}\left( q;q\right)
_{2}-\lambda\,{q}^{2}-\mu\,{q}^{2}-2\, \lambda\,q-2\,\mu\,q-\lambda-\mu} \\
&\frac{-3\,{a}^{2}\lambda\,{q}^{2}+4\,{a}^{2}\mu\,{q }^{2}+\lambda\,{q}%
^{4}-3\,\mu\,{q}^{4}-{a}^{2}\lambda\,q+3\,{a}^{2}\mu \,q+3\,\lambda\,{q}%
^{3}-4\,\mu\,{q}^{3}+{a}^{2}\mu+4\,\lambda\,{q}^{2} }{Z_q{a}^{2}{q}%
^{2}\left( q;q\right) _{2}-Z_q{a}^{2}\left( q;q\right) _{2}-\lambda\,{q}%
^{2}-\mu\,{q}^{2}-2\, \lambda\,q-2\,\mu\,q-\lambda-\mu} \\
&\frac{-3\,\mu\,{q}^{2}+3\,\lambda\,q-\mu\,q+\lambda}{Z_q{a}^{2}{q}%
^{2}\left( q;q\right) _{2}-Z_q{a}^{2}\left( q;q\right) _{2}-\lambda\,{q}%
^{2}-\mu\,{q}^{2}-2\, \lambda\,q-2\,\mu\,q-\lambda-\mu},
\end{align*}
and 
\begin{align*}
a_0&=-\frac{Z_q{a}^{5}{q}^{5}\left( q;q\right) _{2}-Z_q{a}^{5}{q}^{4}\left(
q;q\right) _{2}+Z_q {a}^{4}{q}^{5}\left( q;q\right) _{2}+Z_q{a}^{3}{q}%
^{5}\left( q;q\right) _{2}+Z_q{a}^{2}{q} ^{5}\left( q;q\right) _{2}-{a}%
^{3}\lambda\,{q}^{5}}{Z{a}^ {2}{q}^{2}\left( q;q\right) _{2}-Z_q{a}%
^{2}q\left( q;q\right) _{2}-\lambda\,{q}^{2}-\mu \,{q}^{2}-2\,\lambda\,q-2\,%
\mu\,q-\lambda-\mu} \\
&-\frac{-Z_q{a}^{4}{q}^{2}\left( q;q\right) _{2}-Z_q{a}^{2}{q}^{4}\left(
q;q\right) _{2}-2\,{a}^{3}\lambda\,{q}^{4}+{a}^{3}\mu \,{q}^{4}-Z_q{a}^{3}{q}%
^{2}\left( q;q\right) _{2}-{a}^{3}\lambda\,{q}^{3}+3\,{a} ^{3}\mu\,{q}^{3}}{%
Z_q{a}^ {2}{q}^{2}\left( q;q\right) _{2}-Z_q{a}^{2}q\left( q;q\right)
_{2}-\lambda\,{q}^{2}-\mu \,{q}^{2}-2\,\lambda\,q-2\,\mu\,q-\lambda-\mu} \\
&-\frac{+{a}^{2}\mu\,{q}^{4}+3\,{a}^{3}\mu\,{q}^{2}+3\,{a}^{2} \mu\,{q}%
^{3}+a\lambda\,{q}^{4}-\mu\,{q}^{5}+{a}^{3}\mu\,q+4\,{a}^{2} \mu\,{q}%
^{2}+3\,a\lambda\,{q}^{3}+\lambda\,{q}^{4}}{Z_q{a}^ {2}{q}^{2}\left(
q;q\right) _{2}-Z_q{a}^{2}q\left( q;q\right) _{2}-\lambda\,{q}^{2}-\mu \,{q}%
^{2}-2\,\lambda\,q-2\,\mu\,q-\lambda-\mu} \\
&-\frac{-2\,\mu\,{q}^{4}+3\, {a}^{2}\mu\,q+4\,a\lambda\,{q}^{2}+3\,\lambda\,{%
q}^{3}-\mu\,{q}^{3}+{a }^{2}\mu+3\,a\lambda\,q+3\,\lambda\,{q}%
^{2}+a\lambda+\lambda\,q}{Z_q{a}^ {2}{q}^{2}\left( q;q\right) _{2}-Z_q{a}%
^{2}q\left( q;q\right) _{2}-\lambda\,{q}^{2}-\mu \,{q}^{2}-2\,\lambda\,q-2\,%
\mu\,q-\lambda-\mu},
\end{align*}
with $Z_q=\left( q,a,a^{-1}q;q\right) _{\infty }$.

Let $j=3$. We have 
\begin{equation*}
\mathbb{U}_{0}^{(a)}(x;q,3)=1,\quad \mathbb{U}_{1}^{(a)}(x;q,2)= x-a-1,
\end{equation*}
\begin{equation*}
\mathbb{U}_{2}^{(a)}(x;q,3)=x^2+(-aq-a-q-1)x+a^2q+aq+a+q,
\end{equation*}
\begin{align*}
\mathbb{U}%
_{3}^{(a)}(x;q,3)&=x^3+(-aq^2-aq-q^2-a-q-1)x^2+(a^2q^3+a^2q^2+aq^3+a^2q+2aq^2
\\
&q^3+2aq+q^2+a+q)x-a^3q^3-a^2q^3-a^2q^2-aq^3-a^2q-aq^2-q^3-aq,
\end{align*}

We observe that in the case of $j=2,3$, it holds that the nature of the
polynomials is much simpler for $j>n$. This is due to the definition of the
polynomials. Indeed, observe that 
\begin{equation*}
\mathcal{D}^{j}_qx^{n}(x;q)\equiv 0,\qquad \hbox{ for }j>n.
\end{equation*}
This can be proved directly from the definition of the differential operator 
$\mathcal{D}_q$ applied on any monomial. Therefore, given $j\ge 1$, one has
that $\mathbb{U}_{n}^{(a)}(x;q,j)$ coincides with $U_{n}^{(a)}(x;q)$ for all 
$0\le n<j$. The values of $\lambda,\mu$ are irrelevant for these first
polynomials in the sequence. This phenomenon can also be observed in
different points through the work. More precisely, the representation (\ref%
{e744}) of $\mathbb{U}_{n}^{(a)}(x;q)$ in terms of $U_n^{(a)}(x;q)$ is made
in terms of the quantities $\Delta_{j,n}^{(i)}(a)$, for $i=1,2$. The
definition of these two elements is made in terms of the determinant of a
matrix with a null column for $j>n$. Therefore, formula (\ref{e744}) states
the coincidence of Al-Salam--Carlitz I polynomials and the Sobolev type
polynomials. This property is also directly observed at the Fourier
coefficients $a_{n,k}$.

We also remark that the choice of $\lambda=\mu=0$ provides $\mathbb{U}%
_{n}^{(a)}(x;q,j)=U_n^{(a)}(x;q)$ for every $n$. We observe this is the case
for the polynomial $\mathbb{U}_{3}^{(a)}(x;q,2)$, which is given by 
\begin{align*}
\mathbb{U}_{3}^{(a)}(x;q,2)&={x}^{3}+(-a{q}^{2}-aq-{q}^{2}-a-q-1){x} ^{2}+({a%
}^{2}{q}^{3}+{a}^{2}{q}^{2}+a{q}^{3} \\
&+{a}^{2}q+2\,a{q}^{2}+ {q}^{3}+2\,aq+{q}^{2}+a+q)x-{a}^{3}{q}^{3}-{a}^{2}{q}%
^{3}-{a}^{2}{q }^{2}-a{q}^{3}-{a}^{2}q-a{q}^{2}-{q}^{3}-aq,
\end{align*}
after evaluation at $\lambda=\mu=0$.

\begin{figure}[tbp]
\centering
\includegraphics[width=0.45\textwidth]{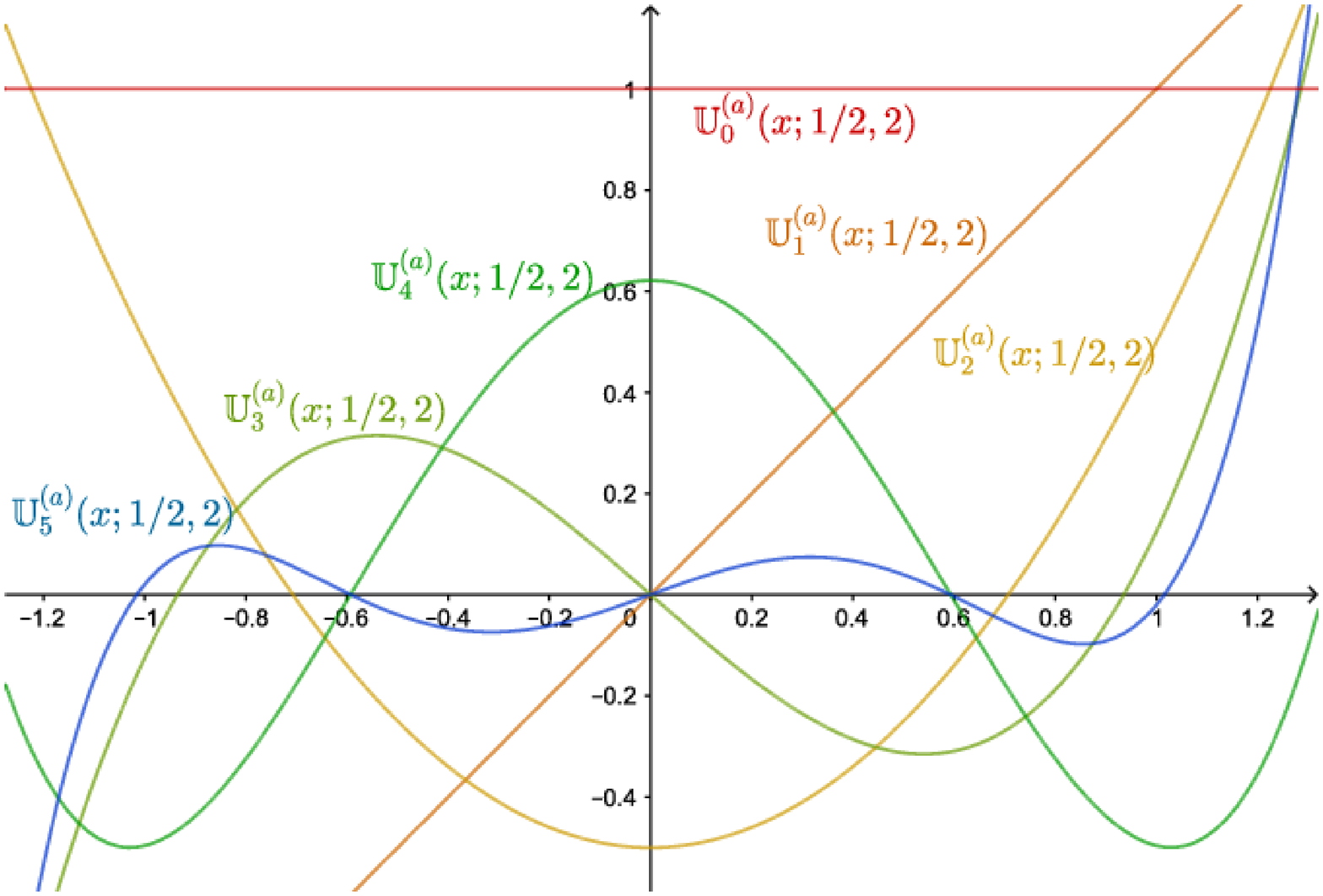} \includegraphics[width=0.45%
\textwidth]{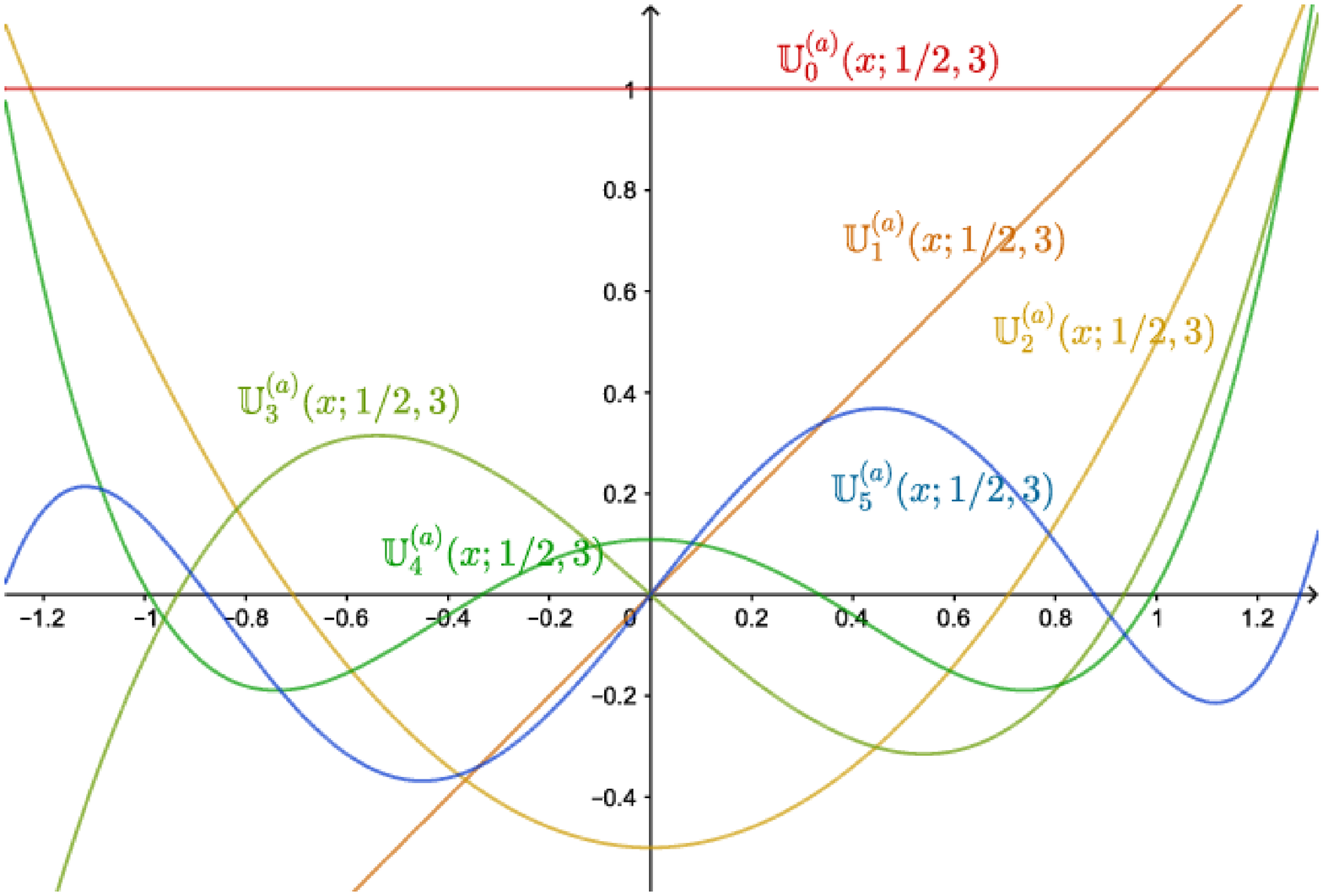} 
\caption{$\mathbb{U}_{n}^{(-1)}(x;\frac{1}{2},j)$ for $j=2$ (left) and $j=3$
(right) for $n=0,1,\ldots,5$}
\end{figure}





\section{Conclusions and open problems}

\label{S6-ConclOP}



Using well known techniques, we provide up to four different versions of the
second order $q$-difference equations satisfied by the monic polynomials $\{%
\mathbb{U}_{n}^{(a)}(x;q,j)\}_{n\geq 0}$, orthogonal with respect to a
Sobolev-type inner product associated to the Al-Salam--Carlitz I orthogonal
polynomials. The studied inner product involves an arbitrary number of $q$%
-derivatives, evaluated on the two boundaries of the orthogonality interval
of the Al-Salam--Carlitz I orthogonal polynomials. We gave two
representations for $\mathbb{U}_{n}^{(a)}(x;q,j)$, one as a linear
combination of two consecutive Al-Salam--Carlitz I orthogonal polynomials,
and other as a $_{3}\phi _{2}$ series. We state not only as usual, but two
different versions of structure relations, which lead to the corresponding
ladder operators, which help us to find up to four different versions of the
second order linear $q$-difference equation satisfied by $\mathbb{U}%
_{n}^{(a)}(x;q,j)$. Finally, as a truly original contribution to the
literature, we obtained a three term recurrence formula with rational
coefficients satisfied by $\mathbb{U}_{n}^{(a)}(x;q,j)$, which is the key
point to establish an appealing generalization of the so-called $J$%
-fractions to the framework of Sobolev-type orthogonality. As problems to be
addressed in a future contribution, we consider to analyze the effect of
having two mass points, each one on a different side of the bounded
orthogonality interval, in the parity of the corresponding Sobolev-type
orthogonal sequence. We also wish to carry out an in-depth analysis on the
zero behavior of these polynomials, as well as to study several of their
asymptotic properties.



\section*{Acknowledgments}



The work of the second (EJH) and third (AL) authors was funded by Dirección
General de Investigación e Innovación, Consejería de Educación e Investigació%
n of the Comunidad de Madrid (Spain), and Universidad de Alcalá under grant
CM/JIN/2019-010, Proyectos de I+D para Jóvenes Investigadores de la
Universidad de Alcalá 2019. The first author (CH) wishes to thank
Departamento de Física y Matemáticas de la Universidad de Alcalá for its
support. 

\end{document}